\pgfplotsset{compat=1.15}
\newcommand{\mytableextraspace}{\addlinespace[.4em]}
\LetLtxMacro\mn\marginnote
\tikzset{node distance=2cm, auto}
\title[On the geometry and topology of positively curved Eschenburg orbifolds]{
On the geometry and topology of positively curved Eschenburg orbifolds }
\author[D.Wulle]{Dennis Wulle}
\address[D. Wulle]{Fakultät Mathematik\\
Universität	Münster\\
Münster\\
Germany.}
\email{dennis.wulle@uni-muenster.de}
\author[M. ZAREI]{Masoumeh Zarei}
\address[M. ZAREI]{Fakultät Mathematik\\
Universität	Münster\\
Münster\\
Germany.}
\email{masoumeh.zarei@uni-muenster.de}
\subjclass[2020]{53C21, 53C25, 57R18}
\keywords{Orbifolds, Eschenburg Spaces, Positive Sectional Curvature,  Cohomology, Collision Detection}
\date{\today}
\newcommand{\Gl}{\mathrm{Gl}}
\newcommand{\Z}{\mathbb{Z}}
\newcommand{\R}{\mathbb{R}}
\newcommand{\C}{\mathbb{C}}
\newcommand{\Q}{\mathbb{Q}}
\newcommand{\EO}{\mathcal{O}_{p, q}^{a, b}} 
\newcommand{\ESO}{\mathrm{E}_{p, q}^7} 
\newcommand{\UU}{\mathrm{U}} 
\newcommand{\Sph}{\mathrm{S}} 
\newcommand{\CB}{\mathrm{B}} 
\newcommand{\T}{\mathsf{T}}
\renewcommand{\H}{\mathsf{H}}
\renewcommand{\S}{\mathsf{S}}
\newcommand{\U}{\mathsf{U}}
\newcommand{\SU}{\mathsf{SU}}
\renewcommand{\O}{\mathsf O}
\newcommand{\diag}{\operatorname{diag}}
\newcommand{\Id}{{\rm Id}}
\newcommand{\sgn}{{\rm sgn}}
\newcommand{\bb}{\mathbb}
\newcommand{\Smith}{\mathrm{Smith}}
\renewcommand{\T}{\mathrm{T}_{p,q}^{a,b}}
\newcommand{\OO}{\mathcal{O}}
\newcommand{\Fr}{\mathrm{Fr}}
\newcommand{\Conv}{\mathrm{Conv}\,}
\newcommand{\pr}{\mathrm{pr}}
\newcommand{\RN}[1]{%
	\textup{\uppercase\expandafter{\romannumeral#1}}%
}
\newtheorem{maintheorem}{Theorem}
\newtheorem{theorem}{Theorem}
\newtheorem{conjecture}{Conjecture}
\newtheorem{lemma}[theorem]{Lemma}
\newtheorem{defn}[theorem]{Definition}
\newtheorem{corollary}[theorem]{Corollary}
\newtheorem{proposition}[theorem]{Proposition}
\newtheorem{example}[theorem]{Example}
\numberwithin{equation}{section}
\numberwithin{theorem}{section}
\theoremstyle{definition}
\newtheorem{remark}[theorem]{Remark}
\begin{document}

\begin{abstract}
The present article explores the relationship between positive sectional curvature and the geometric and topological properties of Eschenburg $6$-orbifolds. First, we prove that positive sectional curvature imposes restrictions on the their singular sets, thereby confirming a conjecture posed by Florit and Ziller. Then we compute the orbifold cohomology rings for those with a specific singular locus. This reveals a distinctive behavior in the cohomology groups of positively curved Eschenburg orbifolds compared to their non-negatively curved counterparts. Furthermore, we compute the orbifold cohomology rings of all Eschenburg orbifolds.

\vspace*{-.5cm}
\end{abstract}
\vspace*{-.5cm}
\maketitle
\reversemarginpar
%

\section{Introduction}
Curvature conditions on Riemannian manifolds, and more recently on singular spaces, have  been a central interest in Riemannian geometry. This endeavor generally follows two main directions: finding examples and identifying obstructions. The obstructions are often related to specific topological properties, raising questions about the interplay between geometry and topology. Of particular  importance are the topological implications of assuming a lower curvature bound, such as positive and non-negative sectional curvature. 

{\color{black} 
While the class of non-negatively curved compact manifolds  contains many examples, including their products and quotients of compact Lie groups,  
very few examples of positively curved ones are known.  
In fact, a full list of  known simply connected examples is given by homogeneous spaces, including the compact rank one symmetric spaces, the Wallach flag manifolds and positively curved Aloff--Wallach spaces, see e.g. \cite{WilkingZiller}, two infinite families in dimensions $7$ and $13$, known as Eschenburg and Bazaikin spaces \cite{Esch82,Bazaikin1996}, respectively, and an additional example in dimension $7$ discovered more than a decade ago \cite{Dearricott2011, Grove2011}  (see also \cite{Zil14} for a survey).

This stands in a sharp contrast with    the few known obstructions which distinguish these two classes,
 primarily consisting of classic results on the fundamental groups such as Bonnet--Myers and Synge's theorems. Important conjectures  like the  Deformation Conjecture for simply connected manifolds and  the Hopf Conjecture are  wildly open due to this phenomenon. The Deformation conjecture claims that a non-negatively curved metric can be deformed to a positively curved one  if one point has positive sectional curvature. Counterexamples to this  for non-simply connected manifolds are given in \cite{Wilking2002}. The Hopf conjecture claims that compact rank two symmetric spaces do not admit metrics with positive sectional curvature. Note that under symmetry assumptions, however, additional obstructions have been found, e.g. \cite{Wilking2003, Wilking2006} and more recently \cite{Kennard2021-ge, Nienhaus22}. 
 }

In the search  for new examples of Riemannian manifolds admitting positive sectional curvature, exploring quotients of Lie groups has constantly proven to be a promising approach. In fact, among the few known examples, only one does not arise from this construction. All others are either homogeneous spaces or, more generally, \emph{biquotients} $G\sslash U$, where $G$ is a compact Lie group and $U\subseteq G\times G$  a closed subgroup which  acts on $G$ by $(u_1, u_2)\cdot g=u_1gu_2^{-1}$. {\color{black}If the action is free then $G\sslash U$ is a manifold, and if the action is almost free, i.e. all isotropy groups are finite,  it is an orbifold. Recall that  orbifolds are singular spaces which are locally modeled on quotients of finite group actions on a Euclidean space.}

The concept of biquotient actions was first introduced by Gromoll and Meyer in relation to exotic spheres \cite{GM74} and applied by Eschenburg and Bazaikin to construct manifolds with non-negative and positive sectional curvature \cite{Esch82,Bazaikin1996}. 
The Eschenburg and Bazaikin spaces, particularly their geometry and topology, have been the focus of intensive study over the past few decades (see \cite{ASTEY199741, Chinburg2007, DeVJoh23,  Esch92,  Florit2009, Kerin2011,Kreck1991}). Moreover, they contain intriguing subfamilies, such as  positively curved cohomogeneity one manifolds, which have reappeared in certain classification results \cite{gwz, Wulle2023}, or the well-known Aloff--Wallach spaces.

  The primary focus of this article is on a family of $6$-dimensional biquotient orbifolds known as \emph{Eschenburg orbifolds}, introduced by Florit and Ziller in \cite{FZ07}. They  showed that the family of $7$-dimensional  Eschenburg spaces are the total spaces of some orbifold fibrations over these spaces. {\color{black} In fact, apart from some of the compact rank one symmetric spaces,   all known compact positively curved manifolds are the total spaces of either a Riemannian submersion or an orbifold fibration.} 
This context raises the hope that studying positive curvature in a broader category may lead to finding obstructions or even discovering new examples in the smooth category.

 The Eschenburg $6$-orbifolds are defined as follows.
For $a,b,p,q\in \Z^3$ with $\sum p_i = \sum q_i$, $\sum a_i = \sum b_i$  define  
\begin{align*}
    \EO \coloneqq \SU(3)\sslash \T
\end{align*}
by an almost free biquotient action of a $2$-torus $T^2 = \lbrace (z,w) \in \bb C^2\, \vert \, z,w \in S^1\rbrace$ as  \begin{align*}
    (z,w) \ast A \coloneqq z^pw^a  A\,  \bar w ^b \bar z^q.
\end{align*}
Equivalently, $\EO$ can be defined by an isometric biquotient circle action on the $7$-dimensional Eschenburg orbifolds $E^7_{p,q}=\SU(3)\sslash \mathrm{S}^1_{p,q}$ (see Section~\ref{sec:eschorb} for details).  We equip  $\EO$ with the submersion metric of a Cheeger deformation of the bi-invariant metric on $\SU(3)$ along some $\U(2)$ subgroup. We call such metrics Eschenburg metrics (see \cite{Esch82}). With these metrics all of the orbifolds   $\EO$ are non-negatively curved and we refer to Section~\ref{sec:poscurv} for the positive curvature condition.

By \cite[Theorem A]{FZ07}, 
the singular set is contained in the union of  $9$ totally geodesic orbifold $2$-spheres $\bb S^2_{ij}$ arranged according to the following diagram \cite[Theorem~A]{FZ07}: \vspace*{-.5cm}
\begin{figure}[ht!] \caption{The structure of the singular set}\label{fig:sing:locus}\centering
\begin{picture}(100,125)(160,30)  
\small \put(220,140){\line(-2,-1){50}}
\put(220,140){\line(2,-1){50}} \put(220,140){\line(0,-1){100}}
\put(220,40){\line(-2,1){50}} \put(220,40){\line(2,1){50}}
\put(270,115){\line(0,-1){50}} \put(170,115){\line(0,-1){50}}
\put(170,115){\line(2,-1){100}} \put(270,115){\line(-2,-1){100}}
\put(215,148){$\ast_{\Id}$} \put(148,54){$\ast_{(123)}$}
\put(148,122){$\ast_{(23)}$} \put(270,122){$\ast_{(12)}$}
\put(270,54){$\ast_{(132)}$} \put(210,30){$\ast_{(13)}$}
\put(240,40){$\bb S^2_{13}$} \put(186,40){$\bb S^2_{31}$}
\put(273,88){$\bb S^2_{21}$} \put(150,88){$\bb S^2_{23}$}
\put(184,135){$\bb S^2_{11}$} \put(240,135){$\bb S^2_{33}$} \put(202,60){$\bb S^2_{22}$} \put(188,107){$\bb S^2_{32}$} \put(244,95){$\bb S^2_{12}$}
\put(217.3,137){$\bullet$} \put(217.3,37){$\bullet$}
\put(167.4,112){$\bullet$} \put(267.3,112){$\bullet$}
\put(267.3,62){$\bullet$} \put(167.4,62){$\bullet$} 
\end{picture}
\end{figure}
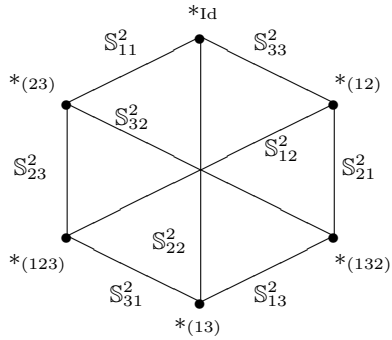

\noindent
Here $(ij)$ denotes the permutation interchanging $i$ and $j$ and $(ijk)$ the cyclic permutation $i \mapsto j \mapsto k\mapsto i$. The vertices $\ast_\sigma$ correspond to   points in $\EO$ and  the edges $\bb S^2_{ij}$ correspond to orbifold  $2$-spheres   connecting precisely two vertices. 
We say that two vertices $\ast_\sigma$ and $\ast_\tau$ have the same parity, if the permutations $\sigma$ and $\tau$ have the same parity. In general the singular set $\Sigma$ of $\EO$ is represented by  a subgraph of \eqref{fig:sing:locus}. It can contain isolated points $\ast_\sigma$ or the  entire edge $\bb S^2_{ij}$. Note that the inner points of an edge always have the same local groups. For the vertices,  there are three possibilities: First, they both have  the same local groups as the inner points, i.e.  the edge is  a ``smooth sphere''. Second, exactly one vertex has a different local group than the interior, i.e. the edge is a ``teardrop''. Third, both vertices have different local groups than the interior, i.e.  the edge is a ``football''. We give more details on the singular sets in Subsection~\ref{SS:singular set}.

The first result addresses the structure of the singular set of Eschenburg orbifolds with positive curvature, a topic initially explored in \cite{FZ07} and further examined in \cite{Y14} and \cite{Y15}. Recall that there are only two smooth manifolds within the family of Eschenburg 6-orbifolds:  the Wallach flag manifold $W^6_1 = \SU(3)/T^2_{\max}$ and the twisted Wallach flag $W^6_2 = \SU(3)\sslash T^2$.  
This implies that, generically, we should expect a non-empty singular set for $\EO$. In fact, various subgraphs of Figure~\ref{fig:sing:locus} can occur (see \cite[p. 174]{FZ07} and \cite[Theorem~D]{Y14}). Given that $W^6_1$, $W^6_2$ admit metrics with positive curvature,  it raises the natural question of whether curvature conditions impose restrictions on the singular set. This was also addressed in \cite{FZ07} and based on their result on positively curved cohomogeneity one Eschenburg $7$-manifolds, the authors conjectured:
\begin{conjecture}\cite{FZ07}\label{Conj:FZ}
    None of the positively curved Eschenburg $7$-manifolds admit an isometric circle action for which $\EO$ has only one singular point.
\end{conjecture}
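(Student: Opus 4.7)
The plan is to combine Florit--Ziller's explicit arithmetic characterization of the singular set of $\EO$ (reviewed in Figure~\ref{fig:sing:locus} and described in detail in \cite{FZ07}) with the classical Eschenburg positive curvature conditions for $E^7_{p,q}$. In that characterization the local group at the interior of an edge $\mathbb{S}^2_{ij}$ is governed by a single gcd of linear forms in $(p,q,a,b)$, while the local group at a vertex $\ast_\sigma$ is governed by the gcd of $2\times 2$ minors of the matrix whose columns record the differences $p_i - q_{\sigma(i)}$ and $a_i - b_{\sigma(i)}$. The hypothesis that $\EO$ has exactly one singular point thus translates into two arithmetic conditions on $(p,q,a,b)$: (i) all nine edge gcds equal $1$, and (ii) exactly one of the six vertex matrices is non-unimodular.

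First I would exploit the symmetries of the setup -- the diagonal $S_3$-action permuting coordinates, the $\Z/2$ involution swapping $(p,q,a,b) \leftrightarrow (q,p,b,a)$, and the bipartite structure of Figure~\ref{fig:sing:locus} -- to reduce to two representative cases for the distinguished singular vertex, one for each parity class, for instance $\sigma = \Id$ and $\sigma = (12)$.

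For each representative I would then compile the full system of constraints: the nine edge-triviality equations, the five remaining vertex-triviality equations, the normalizations $\sum p_i = \sum q_i$ and $\sum a_i = \sum b_i$, and the Eschenburg positive curvature conditions (a definite sign pattern for $\prod_i (p_i - q_{\sigma(i)})$ across every $\sigma \in S_3$). The target is to show that these force the sixth vertex matrix also to be unimodular, contradicting the hypothesis that $\ast_\sigma$ is the unique singular point of $\EO$.

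The main obstacle I foresee is the odd-parity case. The matrix controlling the local group at $\ast_{(12)}$ involves off-diagonal differences, and the interplay between the relevant edge gcds and its $2\times 2$ minors is less transparent than in the diagonal case. I plan to handle this by using the linear dependency among the three columns of each vertex matrix coming from the normalization, together with Eschenburg's sign constraints on $p_i - q_j$, to reduce the arithmetic problem to a finite check that rules out the single-singular-point configuration.
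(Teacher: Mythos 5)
Your opening translation is the right one, and it is essentially how the paper begins: ``$E^7_{p,q}$ positively curved and $\EO$ with exactly one singular point'' becomes a Diophantine system in $(p,q,a,b)$ in which five of the six vertex determinants $N_\sigma$ equal $1$, and one must show the sixth is forced to be $1$ as well (the paper in fact proves the stronger Theorem~\ref{thm:main:sing}\eqref{thm:main:sing:same_Parity}, needing only that the three vertices of one parity class are regular). The genuine gap is that the step you call ``reduce the arithmetic problem to a finite check'' is the entire mathematical content of the proof, and nothing in your proposal makes the problem finite. After the normalizations $\sum p_i=\sum q_i$, $\sum a_i=\sum b_i$ and the $S_3\times S_3$ and $\Gl(2,\Z)$ symmetries, the parameters still range over an unbounded set of integers, and the linear dependence among the columns of the vertex matrices does not bound anything by itself. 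The paper obtains finiteness only through a chain of non-obvious reductions: a reformulation of the curvature condition in terms of two triangles in the plane and the Separating Axis Theorem, yielding Theorem~\ref{thm:pos:curv:fibration} (every positively curved $\EO$ is equivalent to one with $q=(c+d+e,0,0)$, $b=(0,b_2,b_3)$, sitting over a positively curved cohomogeneity-two $E^7_{p,q}$); the explicit formulas for the orders $l_\sigma$ in Lemma~\ref{L:Change_of_Parameteres}; solving the regularity equations so that integrality of $d=\frac{xt+(c-x)(r+cy)}{xy-l}$ becomes a usable divisibility constraint; the elementary bound of Lemma~\ref{lem:integers}; and finally a six-way sign case analysis in Proposition~\ref{Prop:Cohom_2}. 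You would need to supply an argument of comparable substance; asserting that a finite check exists is not a proof.

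A second, concrete error is the curvature hypothesis you plan to feed into the computation. Positive curvature of $E^7_{p,q}$ is $q_i\notin[\min_j p_j,\max_j p_j]$ for all $i$; this implies, but is strictly stronger than, a constant sign of $\prod_i(p_i-q_{\sigma(i)})$ over all $\sigma\in S_3$. For instance $p=(0,5,1)$, $q=(2,2,2)$ gives every product equal to $6$, yet $q_i\in[\min p_j,\max p_j]$, so there is no positively curved Eschenburg metric. Since the conjecture genuinely fails once positive curvature is relaxed --- Corollary~\ref{cor:nonnegact} and Example~\ref{ex:nonegative} exhibit non-positively-curved (even almost positively curved) Eschenburg orbifolds with exactly one singular point --- you cannot replace the interval condition by a weaker necessary consequence without first verifying that the weakened statement is still true. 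You should work with the interval condition itself (equivalently, with the triangle non-intersection condition of Corollary~\ref{C:Sec>0_Geometric}), which is exactly what supplies the sign constraints \eqref{Eq:No_opposite_sign} that drive the paper's case analysis.
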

Note that Conjecture \ref{Conj:FZ} does not hold for non-negative curvature, or even almost positive curvature \cite[p. 172]{FZ07}. 
Theorem~\ref{thm:main:sing} provides more insights into the structure of the singular sets of positively curved Eschenburg orbifolds, and consequently confirms the conjecture.

\begin{maintheorem}\label{thm:main:sing}
 Let $\EO$ be an effective positively curved Eschenburg orbifold with non-empty singular set $\Sigma$. Then the following statements hold.
\begin{enumerate}
    \item  \label{thm:main:sing:same_Parity}  $\Sigma$ contains at least one  vertex with odd  and one  vertex with even parity.

    \item \label{thm:main:sing:smoothsphere} 
    If $\Sigma$ contains exactly two vertices, then it also contains the sphere connecting them. Furthermore, the sphere is smooth.
\end{enumerate}
\end{maintheorem}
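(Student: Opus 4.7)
The plan is to combine the explicit description of the isotropy data from \cite{FZ07} with the Eschenburg-type positive curvature condition. For each vertex $\ast_\sigma$ the local group is the cokernel of a $\Z$-linear map $\Z^2 \to \Z^2$ given by a matrix $M_\sigma(p,q,a,b)$, and for each edge $\bb S^2_{ij}$ the generic local group is the cokernel of a $\Z$-linear map $\Z^2 \to \Z$ given by a vector $n_{ij}(p,q,a,b)$. Thus $\ast_\sigma$ is smooth in $\EO$ iff $|\det M_\sigma|=1$, and the interior of $\bb S^2_{ij}$ is smooth iff the entries of $n_{ij}$ are coprime. My first step is to tabulate all six matrices $M_\sigma$ and all nine vectors $n_{ij}$, using the $S_3\times S_3$ outer symmetry of the parameter set to cut down the bookkeeping and to keep track of how the permutations in Figure~\ref{fig:sing:locus} act on the parameters.

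For (1), I would argue by contradiction: suppose every singular vertex has the same parity, say even. Then the three odd-parity vertices $\ast_{(12)}, \ast_{(13)}, \ast_{(23)}$ all satisfy $|\det M_\sigma|=1$. Expanding and combining these three determinantal equalities produces a short $\Z$-linear identity in the $p_i, q_j, a_k, b_l$ whose value the Eschenburg positive curvature inequalities (Section~\ref{sec:poscurv}) force to have a strictly definite sign, giving the contradiction. The symmetric case (all singular vertices odd) is handled identically after conjugating the action by an element permuting the parities.

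For (2), by (1) the two singular vertices have opposite parity and so are joined by a unique edge $\bb S^2_{ij}$ in Figure~\ref{fig:sing:locus}. If the interior of $\bb S^2_{ij}$ were smooth, coprimality of the entries of $n_{ij}$ together with $|\det M_\sigma|=1$ at the four remaining vertices would again produce an integer relation on $p,q,a,b$ ruled out by the sign conditions; hence $\bb S^2_{ij}\subset \Sigma$. To upgrade this to smoothness of the orbifold sphere, we must show the local groups at its two endpoints coincide with the generic one along the sphere. If the local group at one endpoint were strictly larger, the extra relation would either force a third vertex into $\Sigma$ or, again via the positive curvature inequalities, lead to a forbidden identity, either way contradicting the hypothesis of exactly two singular vertices. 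The main anticipated obstacle is the last step of each argument: extracting from a given collection of smoothness (determinant/gcd) conditions an explicit integer identity and checking that it is incompatible with Eschenburg's sign inequalities. Once the tables of $M_\sigma$ and $n_{ij}$ from Step~1 are available, I expect this to reduce to a short but delicate case analysis exploiting the $S_3$-symmetry of the diagram.
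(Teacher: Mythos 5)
Your plan for Part~(1) aims at a statement that is false, so the intended contradiction cannot materialize. You propose to show that the three determinantal conditions $|\det M_\sigma|=1$ at the odd-parity vertices, combined with the positive curvature inequalities, yield a forbidden sign identity. But the flag manifold $W^6_1=\SU(3)/T^2_{\max}$ (and more generally every positively curved Eschenburg $6$-\emph{manifold}) is positively curved with \emph{all six} vertices regular, so ``three regular vertices of one parity $+$ positive curvature'' is perfectly consistent and cannot by itself produce a contradiction. What the paper actually proves (Proposition~\ref{Prop:Cohom_2}) is that these hypotheses force the whole $\T$-action to be \emph{free}, which then contradicts $\Sigma\neq\emptyset$; in particular one must also rule out singular edge interiors, which your outline never addresses. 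Moreover, the three conditions $N_\sigma=1$ are quadratic (determinants), not linear, and the argument that closes the gap is not a single sign identity but an interaction between divisibility constraints (integrality of $d=\frac{xt+(c-x)(r+cy)}{xy-l}$ after solving the unit-determinant equations parametrically, as in \eqref{Eq:xy-1} and Lemmas~\ref{lem:integers}--\ref{Lem:cy=2=ey}) and sign constraints, run through a seven-fold case distinction on the signs of $c,d,e$. Crucially, this only becomes tractable after the reduction of Theorem~\ref{thm:pos:curv:fibration}: using the Separating Axis Theorem one replaces $\EO$ by an equivalent orbifold whose $7$-dimensional total space $E^7_{p',q'}$ is a positively curved cohomogeneity-two Eschenburg space, which pins the curvature condition down to concrete inequalities like \eqref{Eq:No_opposite_sign}. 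Your proposal contains no analogue of this reduction, and without it the curvature hypothesis (existence of \emph{some} separating edge among six candidates) is too weak to extract definite signs.

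The same criticism applies to Part~(2): ``coprimality plus four unit determinants would produce an integer relation ruled out by the sign conditions'' is again a hope rather than an argument, and the true dichotomy is not ``relation vs.\ no relation'' but a classification. The paper parametrizes all solutions with four regular vertices, treats the degenerate case $b_2=b_3$ separately via Lemma~\ref{L:b_2=b_3} (where the sphere is automatically smooth because a cohomogeneity-two $S^1\cdot\SU(2)$-action appears), and for the remaining families verifies non-positivity of the curvature by checking, for each of the six candidate separating edges, that the projections onto the corresponding axis overlap (the tables in Subsection~\ref{SS:Part_2}). Some of those families genuinely occur as non-negatively curved orbifolds with exactly one or two singular points (Corollary~\ref{cor:nonnegact}, Example~\ref{ex:nonegative}), so no clean algebraic identity can exclude them; only the full collision-detection analysis does. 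Your Step~1 (tabulating $M_\sigma$ and $n_{ij}$) matches the paper's setup in Subsection~\ref{SS:singular set}, but the core of both parts --- the reduction to cohomogeneity two and the case-by-case interplay of integrality with the geometric separation criterion --- is missing.
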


    Theorem~\ref{thm:main:sing} implies that $\Sigma$ is either empty or  consists of at least three points. We remark that this conclusion  is  sharp: There are positively curved examples with exactly three singular points (see \cite[Theorem~5.2.10]{Y14}). Moreover, there are Eschenburg $6$-orbifolds with almost positive curvature and with exactly one or exactly two singular points, Example~\ref{ex:nonegative}.  
    
    From the proof of Part~\ref{thm:main:sing:smoothsphere} of Theorem~\ref{thm:main:sing}, we present a full classification, up to equivalence, of the spaces where the singular set is contained in one edge of graph~\eqref{fig:sing:locus}.  Corollary~\ref{cor:nonnegact} includes all orbifolds with precisely one singular point, while Corollary~\ref{lem:actionssmooth} covers all cases where the singular set consists of exactly one smooth sphere. In the latter situation, a distinction between positive and non-negative sectional curvature is the existence of  cohomogeneity two actions. In particular, under the assumptions of Theorem~\ref{thm:main:sing}~\eqref{thm:main:sing:smoothsphere}, the orbifold $\EO$ admits an Eschenburg metric with positive sectional curvature if and only if it admits a cohomogeneity two action by $\SU(2)\cdot S^1\subset \SU(3)$. In general an Eschenburg orbifold only admits a cohomogeneity $4$ torus action.\\

In the the second part of the article, we focus on the topology of Eschenburg orbifolds. We are especially interested in the orbifold cohomology ring. In general, the definition  is delicate, but in this special case it is  given by the equivariant cohomology of the $\T$-action on $\SU(3)$:
\begin{align*}
    H^\ast_\OO(\EO) = H^\ast_{\T}(\SU(3)) \coloneqq H^\ast(E\T \times_{\T} \SU(3)),
\end{align*}
where $E\T \times_{\T} \SU(3)$ is the Borel construction, see Section \ref{subsec:orbicohom} for more details. In the light of Theorem~\ref{thm:main:sing}, it is particularly interesting to study those examples where the singular set consists of a smooth sphere.
Theorem~\ref{thm:main:cohomgroups} demonstrate  how positive curvature impacts the topology of these spaces through their cohomology groups.

\begin{maintheorem}\label{thm:main:cohomgroups}
    Let  $\EO$ be an effective Eschenburg orbifold with positive sectional curvature such that  ${\color{black}\bb S^2_{ij}}$ {\color{black}contains the singular set $\Sigma$ of $\EO$}. Then $\bb S^2_{ij}$ is a smooth sphere with orbifold group $\bb Z_k$, and the even degree orbifold cohomology groups stabilize  with order $k^2$.\\
   More  precisely, the cohomology groups are given by
           \begin{align*}
                H^i_{\mathcal{O}}(\EO) = \begin{cases}
                0 & \text{for } i \text{ odd}\\
                    \bb Z & \text{for } i = 0\\
                    \bb Z^2 &\text{for } i = 2,4,\\
                \end{cases}
            \end{align*}
            and
             \begin{align*}
              H^i_{\OO}(\EO) = \begin{cases}
                    \bb Z \oplus \Z_k & \text{for } i = 6\\
                    \bb Z_k^2 &\text{for } i=2m \ge 8, 
                \end{cases} \quad \text{ or }\quad   H^i_{\OO}(\EO) =  \begin{cases}
                    \bb Z \oplus \bb Z_{\gcd(2,k)} & \text{for } i = 6\\
                    \Z_{\gcd(2,k)} \oplus \bb Z_\frac{k^2}{\gcd(2,k)} &\text{for } i=2m \ge 8.
                \end{cases}
        \end{align*}
\end{maintheorem}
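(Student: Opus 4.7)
The plan is to compute $H^*_\OO(\EO) = H^*_{T^2}(\SU(3))$ via the Leray--Serre spectral sequence of the Borel fibration
\[
\SU(3) \longrightarrow ET^2 \times_{T^2} \SU(3) \longrightarrow BT^2,
\]
with $E_2^{p,q} = H^p(BT^2) \otimes H^q(\SU(3))$. Since $H^*(BT^2;\Z) = \Z[u_1,u_2]$ and $H^*(\SU(3);\Z) = \Lambda_\Z(x_3,x_5)$, the only potentially nontrivial differentials are the transgressions $d_4(x_3) = f_3 \in H^4(BT^2)$ and $d_6(x_5) = \bar f_5 \in H^6(BT^2)/(f_3 \cdot H^2(BT^2))$. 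Standard biquotient theory identifies $f_3$ and $f_5$ (up to sign) with the second and third Chern classes of the virtual bundle over $BT^2$ determined by the difference of the left and right weights of the $T^2$-action, expressed as elementary symmetric polynomials in the integers $p,a,q,b$.

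Under the hypothesis $\Sigma \subseteq \bb S^2_{ij}$, I invoke Corollary~\ref{lem:actionssmooth} to reduce $(p,q,a,b)$, up to the standard equivalences of Eschenburg orbifolds, to an explicit canonical form depending only on the integer $k$. The two cases in the theorem correspond to two essentially different canonical forms produced by the classification, eventually distinguished by a divisibility condition involving $\gcd(2,k)$. In each canonical form I compute $f_3$ and $f_5$ explicitly; after an invertible integer change of basis in $H^2(BT^2)$ they take a simple shape that encodes the factor $k$, and I can directly describe the quotient ring $R \coloneqq \Z[u_1,u_2]/(f_3, f_5)$.

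The $E_\infty$ page then assembles from $R$ along the bottom row $q=0$ and from annihilator/cokernel modules for multiplication by $f_3$ and $f_5$ along the rows $q = 3, 5, 8$. A rank count immediately yields the stated free groups in degrees $0, 2, 4$, while stabilization of the torsion from degree $8$ onward follows once I verify that multiplication by a suitable degree-$2$ element of $R$ becomes an isomorphism on the torsion part in that range. The main technical obstacle is then the resulting $\Z$-extension problems in each column: a careful case analysis, using the explicit forms of $f_3, f_5$ and the multiplicative structure of $R$, is what pins down $H^6_\OO \cong \Z \oplus \Z_k$ versus $\Z \oplus \Z_{\gcd(2,k)}$, and produces the $\Z_k^2$ versus $\Z_{\gcd(2,k)} \oplus \Z_{k^2/\gcd(2,k)}$ dichotomy in degrees $\geq 8$.
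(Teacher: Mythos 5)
Your overall frame (Borel fibration for the $T^2$-action on $\SU(3)$, transgressions given by differences of symmetric polynomials in the weights, reduction to the canonical parameter families of Corollary~\ref{lem:actionssmooth}) is consistent with the paper's setup, but the route you take from there diverges from the paper's and has two concrete gaps. First, you plan to assemble $E_\infty$ from the bottom row \emph{together with} annihilator/cokernel modules in rows $q=3,5,8$ and then resolve $\Z$-extension problems column by column. In fact those higher rows vanish: the paper's Theorem~\ref{thm:main:cohom} shows the spectral sequence collapses onto $E_\infty^{\ast,0}$, so that $H^\ast_\OO(\EO)\cong \Z[s,t]/\langle f_3,f_5\rangle$ with no extension problems at all. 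The nontrivial input there is the injectivity of $d_6$ on $E_6^{0,5}$, which the paper gets from rational Poincar\'e duality of the orbifold (a nonzero $E_7^{0,5}$ would force $b_1(\EO;\Q)\neq 0$). Your proposal neither proves this collapse nor explains how the extension problems would otherwise be resolved; as written, the "careful case analysis" is asked to do work that is either unnecessary (if you prove the collapse) or genuinely hard (if you do not).

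Second, and more seriously, even once you know $H^\ast_\OO(\EO)$ is the quotient ring $R$, extracting the graded pieces $R_{2m}$ for \emph{all} $m$ means computing Smith normal forms of $(m+1)\times(m+1)$ matrices uniformly in the parameters, which the paper explicitly flags as the hard step for entire families. Your substitute -- that multiplication by a degree-$2$ element of $R$ is an isomorphism on torsion from degree $8$ on -- is unjustified: the natural tool (localization for a freely acting circle $S^1\subset T^2$, as in \cite{tD87}) requires exhibiting such a circle (available here only because the orbifold group is cyclic, which itself needs an argument) and yields the isomorphism only in a range starting near $\dim\SU(3)=8$, leaving $H^6$ and the $6\to 8$ transition -- precisely where the $\Z\oplus\Z_k$ versus $\Z\oplus\Z_{\gcd(2,k)}$ dichotomy is decided -- to unbounded explicit matrix computations. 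The paper avoids all of this with Proposition~\ref{prop:isom}: a Thom-isomorphism/Gysin argument identifying $H^i_\OO(\EO)\cong H^{i-4}_{T^2}(\U(2)_{ij})$ for $i\ge 6$ (after checking $H^5((\SU(3)\setminus\U(2)_{ij})/T^2)=0$), which reduces everything in degrees $\ge 6$ to the orbifold cohomology of the smooth singular sphere, where the stable range begins at degree $4$ and only a single $3\times 3$ Smith normal form per family is needed (Lemma~\ref{lem:cohom_smooth_sphere}). You need either this reduction to the singular stratum or a worked-out substitute for it; without one, the proposal does not close.
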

Notably,  we obtain a distinctive behaviour of the cohomology groups of  positively curved Eschenburg orbifolds, where the singular set is contained in {\color{black} an edge of graph \eqref{fig:sing:locus}}, as opposed to their non-negatively curved counterparts. Namely, the stable even cohomology group is finite with square order. While this condition is not sufficient for positive curvature, there are large classes of Eschenburg orbifolds with just non-negative sectional curvature fulfilling the other assumptions of Theorem~\ref{thm:main:cohomgroups}, for which the conclusion fails, see Example~\ref{ex:nonegative}. 
Finally, we note that the order $k$ of the local group of the singular set can be computed from the parameters.

After addressing the cohomology groups in certain specific examples, Theorem~\ref{thm:main:cohom} presents the full structure of the orbifold  cohomology ring of any Eschenburg $6$-orbifolds:

\begin{maintheorem}\label{thm:main:cohom} Let $\EO$ be an effective Eschenburg orbifold, then
 \begin{align*} 
     H_\mathcal{O}^{\ast}(\EO) = \bb Z[s,t]/\langle\sigma_i(s\cdot p+t\cdot a)-\sigma_i(s\cdot q +t \cdot b) \,\vert \, i=2,3\rangle,
 \end{align*}
where $\sigma_i$ are the elementary symmetric polynomials in three variables.  In particular, 
\begin{align*}
    \sigma_2(T_1,T_2,T_3) &=T_1T_2+T_2T_3+T_3T_1,\\
    \sigma_3(T_1,T_2,T_3) &=T_1T_2T_3.
\end{align*}
\end{maintheorem}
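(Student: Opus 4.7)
The plan is to compute $H^\ast_\OO(\EO) = H^\ast_\T(\SU(3))$ via the standard Eschenburg--Singhof formula for the cohomology of biquotients. The starting observation is that $\SU(3)$ is a transitive $(\SU(3) \times \SU(3))$-space under $(g_1, g_2) \cdot A = g_1 A g_2^{-1}$ with isotropy the diagonal $\Delta\SU(3)$. Hence the Borel construction $E\T \times_\T \SU(3)$ fits into the homotopy pullback square
\[
\begin{tikzcd}
E\T \times_\T \SU(3) \arrow[r] \arrow[d] & B\SU(3) \arrow[d, "B\Delta"] \\
B\T \arrow[r] & B(\SU(3) \times \SU(3))
\end{tikzcd}
\]
whose bottom map is the classifying map of the biquotient representation of $\T$; it is realized up to homotopy through the embedding $\T \hookrightarrow \U(3) \times \U(3)$ into pairs of equal determinant, using that the difference-of-determinants map is null-homotopic on $B\T$. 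The associated Eilenberg--Moore spectral sequence converges to $H^\ast_\T(\SU(3))$ with $E_2$-page $\mathrm{Tor}_{H^\ast(B(\SU(3) \times \SU(3)))}\bigl(H^\ast(B\T), H^\ast(B\SU(3))\bigr)$.

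The decisive algebraic input is that $B\Delta$ realizes $H^\ast(B\SU(3); \Z) = \Z[c_2, c_3]$ as the quotient of $H^\ast(B(\SU(3)\times\SU(3)); \Z) = \Z[c_2, c_3, c_2', c_3']$ by the regular sequence $(c_2 - c_2',\, c_3 - c_3')$. The Koszul complex of this regular sequence is an integral free resolution, so all higher Tor groups vanish, the spectral sequence collapses to a single column, and one obtains the Eschenburg--Singhof presentation
\[
H^\ast_\T(\SU(3)) \cong H^\ast(B\T) \otimes_{H^\ast(B(\SU(3) \times \SU(3)))} H^\ast(B\SU(3)).
\]

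The final step is to identify the maps on generators. The rank-$3$ representation of $\T$ determined by the first biquotient factor has weights $p_i s + a_i t$ in $H^2(B\T) = \Z\langle s, t\rangle$, so by the splitting principle the $k$-th Chern class pulls back to $\sigma_k(sp + ta)$. The second factor analogously gives $c_k' \mapsto \sigma_k(sq + tb)$. Substituting these images into the relations $c_k - c_k' = 0$ yields precisely $\sigma_k(sp + ta) - \sigma_k(sq + tb)$ for $k = 2, 3$, establishing the claimed presentation; the $k = 1$ relation is automatic from the biquotient constraints $\sum p_i = \sum q_i$ and $\sum a_i = \sum b_i$.

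I expect the main technical subtlety to lie in producing and justifying the classifying map $B\T \to B(\SU(3) \times \SU(3))$ integrally, since $\T$ does not embed in $\SU(3) \times \SU(3)$ as a group. Once the lift through $B(\U(3) \times \U(3))$ is in place, the remainder of the argument proceeds over $\Z$ thanks to the torsion-freeness of $H^\ast(B\SU(3); \Z)$ and the regular-sequence structure of the diagonal ideal.
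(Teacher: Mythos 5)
Your route — identify $H^\ast_\OO(\EO)$ with $H^\ast_{\T}(\SU(3))$, pull back from the diagonal reference fibration, and read off the relations as differences of Chern classes computed from the weights — is the same circle of ideas as the paper's, except that the paper runs Eschenburg's transgression theorem in the Serre spectral sequence of $\U(3)\to ET^3\times_{T^3}\U(3)\to BT^3$ rather than the Eilenberg--Moore spectral sequence of a pullback square; your identification of the relations $\sigma_i(sp+ta)-\sigma_i(sq+tb)$ is correct. However, your pullback square does not exist as stated. The $\T$-action on $\SU(3)$ does not factor through $\SU(3)\times\SU(3)$: the matrix $z^pw^a$ has determinant $z^{\sum p_i}w^{\sum a_i}$, so the homomorphism lands only in the equal-determinant subgroup $K'=\lbrace (u_1,u_2)\in\U(3)^2 : \det u_1=\det u_2\rbrace$, and the null-homotopy of the \emph{difference} of determinants lifts $B\T\to B\U(3)^2$ only to $BK'$, not to $B\SU(3)^2$. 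The obstruction to the further lift is the common determinant class $(\sum p_i)s+(\sum a_i)t\in H^2(B\T)$, which is nonzero in general, and $H^\ast(BK')$ is not $\Z[c_2,c_3,c_2',c_3']$, so the Koszul computation is set up over the wrong ring. The repair is exactly the paper's device: enlarge $T^2$ to $T^3$ by a circle mapping to $\diag(u,1,1)$ in the second factor, so the action extends to $\U(3)$, the reference fibration becomes $B\U(3)\to B\U(3)^2$ with diagonal ideal $(c_1-c_1',c_2-c_2',c_3-c_3')$, and the degree-one relation eliminates the extra polynomial generator of $H^\ast(BT^3)$.

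The second, more substantive gap is the claimed collapse. Regularity of the diagonal ideal in $H^\ast(BG^2)$ gives the Koszul resolution and hence identifies $\mathrm{Tor}^{-s}$ with the $s$-th Koszul homology of $H^\ast(B\T)$ on the \emph{pulled-back} classes $f_i=\sigma_i(sp+ta)-\sigma_i(sq+tb)$; it does not give vanishing of the higher Tor groups. That vanishing is equivalent to $(f_2,f_3)$ being a regular sequence in $\Z[s,t]$, which is precisely where almost-freeness and effectiveness of the action enter and which must be proved: if it fails, the $E_2$-page has more than one column and the conclusion is false as stated. The paper proves the corresponding statement — injectivity of the transgression $d_6$, i.e.\ of multiplication by $f_3$ on $\Z[s,t]/(f_2)$ — by a rational Poincar\'e-duality argument for the compact orbifold; you need the same input (and, since you work integrally, some care that $f_2$ and $f_3$ share no irreducible factor in the UFD $\Z[s,t]$, integer primes included). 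As written, "the Koszul complex is a free resolution, so all higher Tor groups vanish" is a non sequitur, and this is the step that actually carries the geometric content of the theorem.
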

\noindent

The simplicity of the ring should not  imply  the misleading impression that deriving an explicit description of the cohomology groups is straightforward. 
While it may be relatively easy to compute all cohomology groups for given parameters $a, b, p, q, \in \Z^3$, 
doing so for all Eschenburg orbifolds or infinite subfamilies at once appears to be a  challenging problem. However, as Theorem~\ref{thm:main:cohomgroups} demonstrates, 
in specific cases we were able to compute the orbifold cohomology groups of  large  families.

Before we proceed to the strategy of the proofs, let us note that  studying orbifolds for their own sake remains an intriguing standalone problem.
The geometry of orbifolds can often be investigated by naturally generalizing concepts from smooth manifolds. We refer the reader to \cite[Chapters 3, 4]{Car22} for a detailed account of some generalizations of classical results on Riemannian manifolds, particularly in relation to positive curvature, and to \cite{Getal24, KL14,  Lange2018, 
 Stanhope2005, Yer14} for more interesting results on the geometry of orbifolds. 
While geometric problems on orbifolds can often be addressed more directly, their algebraic topology is more delicate. To capture their local structure, one can use the language of ``groupoids'', which creates a bridge between differential geometry and homotopy theory (see \cite{ALR07}). From this perspective, it is  interesting to study orbifold cohomology theories. In particular, it is appealing to develop a  a GKM theory that  connects the ``graph cohomology'' to the orbifold cohomology, see  \cite{GW22} for a treatment on the rational GKM theory for orbifolds. We point  out that, in the context of our article,  graph~\eqref{fig:sing:locus} corresponds to the ``GKM-graph'' of a $T^2$-action on $\EO$.

We briefly explain the strategies of the proofs.
The proof of Theorem~\ref{thm:main:sing} has two pillars: The first one is our geometric interpretation of the positive curvature condition on the spaces $\EO$ (Proposition~\ref{prop:pos_curv_condition}) which links the curvature properties of $\EO$ to the intersection of certain  triangles in the Euclidean plane arising from the parameters (see Figure~\ref{fig:triangles}). The second one is a collision detection method based on the \emph{Separating Axis Theorem}~\ref{Thm:Separation}, which provides a method to determine whether   the triangles intersect. The assumptions of Theorem~\ref{thm:main:sing} impose restrictions on the parameters making the method feasible. The novelty in this approach is the interpretation of the positive curvature condition as a problem in Euclidean geometry, allowing us to apply a large and well studied toolbox to solve it.
  We remark that in computational geometry the method of collision detection is used to computationally determine whether two objects are intersecting (``colliding'') or not.  In fact, it has various applications in physical modelling, computer animation, engineering and game programming \cite{lin1997collision,Lazaridis2021}.

To prove Theorem~\ref{thm:main:cohom}, we follow 
 Eschenburg's method \cite{Esch92} to determine the cohomology of a biquotient $G\sslash U$ using the spectral sequence corresponding to the bundle   
\begin{align*}
    G \to EU \times_U G \to BU.
\end{align*}
 Since the orbifold cohomology of $G\sslash U$ is defined to be the cohomology of $ EU \times_U G$,  we obtain the result.
 
Finally, for proving Theorem~\ref{thm:main:cohomgroups}, we will use Proposition~\ref{prop:isom} to relate the cohomology of $\EO$ to the cohomology of its singular set $\Sigma$. Therefore, the proof can be reduced to determining the orbifold cohomology of $\Sigma$, which is a smooth sphere by Theorem~\ref{thm:main:sing}. The smoothness simplifies the necessary computations, leading to the proof of Theorem~\ref{thm:main:cohomgroups}.  

The paper is structured as follows. Section \ref{sec:eschorb} provides a brief summary of orbifolds in general, followed by fundamental information about Eschenburg orbifolds. Section \ref{sec:poscurv} examines the positive curvature condition on Eschenburg 6-orbifolds from a geometric perspective and establishes the foundations for developing a strategy to prove Theorem~\ref{thm:main:sing}. The proof of this theorem is presented in Section~\ref{sec:singset}. Section~\ref{sec:cohom} focuses on the cohomology of Eschenburg orbifolds, beginning with an introduction to orbifold cohomology and Eschenburg's method for computing the cohomology of biquotients \cite{Esch82}. This section concludes with the proofs of Theorem~\ref{thm:main:cohom} and Theorem~\ref{thm:main:cohomgroups}.\\

\noindent\emph{Acknowledgements:} We wish to thank Burkhard Wilking for his suggestion on a problem related to Eschenburg $6$-orbifolds, which eventually led to the current project. We would like to thank Christoph B\"ohm for his  suggestions on improving the exposition of the manuscript.  We are also grateful to  Michael Wiemeler and Jan Nienhaus  for informative discussions and comments. Finally, we thank Achim Krause for his idea on the proof of Proposition \ref{prop:isom}, which resulted in a more elegant proof of Theorem \ref{thm:main:cohomgroups}.

The first named author acknowledges support by the Alexander von Humboldt Foundation through Gustav
Holzegel’s Alexander von Humboldt Professorship endowed by the Federal Ministry of Education and Research. The second named author acknowledges support  from Deutsche Forschungsgemeinschaft (DFG, German Research Foundation) under Germany's Excellence Strategy EXC 2044-390685587, Mathematics M\"unster: Dynamics-Geometry-Structure, and from  DFG grant ZA976/1-1 within the Priority Program SPP2026 "Geometry at Infinity".

\section{An overview of Eschenburg $6$-orbifolds} \label{sec:eschorb}
In this section, we first provide a brief overview of some general aspects of orbifolds. Then, following \cite{FZ07, Y14, Y15}, we recall the basic facts about Eschenburg orbifolds, establish the notation, and set up the framework for the  remainder  of this article. 

\subsection{Orbifolds} \label{subsec:orbifolds} Classically, an $n$-dimensional smooth orbifold  $\mathcal{O}$ is defined as a second-countable,
Hausdorff topological space $\lvert \mathcal{O}\rvert$, known as the \emph{underlying topological space} of $\mathcal{O}$, locally described by charts $(\tilde{U}, \Gamma, U, \pi)$, where $\tilde{U}$ is an open subset of $\R^n$, $\Gamma$ is a finite group acting smoothly and \emph{effectively} on  $\tilde{U}$, $U$ is an open subset of $X$, 
and $\pi :\tilde{U}\to U$ is a homeomorphism, along with a compatibility condition on the charts (see \cite[p. 2]{AF24} for more precise definition). 
Orbifolds are singular spaces, and their singular set is defined using the so-called local groups:
\begin{defn}
Let $x\in \lvert \mathcal{O}\rvert$ and  $(\tilde{U}, \Gamma, U, \pi)$ be 
any local chart around $x=\pi(p)$.  We define the \emph{local group} at $x$ as
\[ \Gamma_x= \{g\in \Gamma  \,\colon \,gp = p\}.\]
This group is uniquely determined up to conjugacy in $\Gamma$.
\end{defn}

\begin{defn}
For an orbifold $\mathcal{O}$, its \emph{singular set} is defined as
\[\Sigma(\mathcal{O}) =\{x\in  \lvert \mathcal{O}\rvert \,\colon\, \Gamma_x \neq 1\}.\]
We call  $R( \mathcal{O}) = \lvert \mathcal{O}\rvert\setminus \Sigma(\mathcal{O})$ the \emph{regular set} of $\mathcal{O}$.
\end{defn}

Compact transformation groups provide a rich class of examples for orbifolds, which is particularly important for this article.

\begin{defn}\label{dfn:orbifoldquotient}
    A quotient orbifold $\mathcal{O}$ is an almost free action of a Lie group $G$ on a smooth manifold $M$. Its geometric realisation is given by $\vert \mathcal{O}\vert = M/G$.  Let $\pi\colon M\to M/G$ be the projection map.  The singular set is the set  of all points $x\in M/G$ where the isotropy group $G_{p}$ is non-trivial for $\pi(p)=x$. For simplicity we often write $\mathcal{O} = M/G$, if the action is clear from the context. If the action is not effective, we call $\mathcal{O}=M/G$ an ineffective quotient orbifold.
\end{defn}

\begin{remark}\label{rem:orbifoldquotient}\mbox{}
\begin{enumerate}
\item Every  classical orbifold  can be represented as the quotient of a smooth manifold by an effective almost free action of a compact Lie group. In fact, the frame bundle $\Fr(\OO)$ of $\OO$ is a smooth manifold with an effective and almost free $\O(n)$-action such that $\Fr(\OO)/\O(n) \cong \OO$ (see e.g. \cite[Theorem 1.23]{ALR07}). For ineffective orbifolds, in general, we do not derive an equivalent description as $\O(n)$-quotients of the frame bundle. 
    \item  For brevity, we will sometimes drop the term \emph{quotient}. Further, all orbifolds mentioned in this article are quotient orbifolds.
   
    \item By definition all points in an ineffective orbifold are generically singular, since their local groups always contain the ineffective kernel. Sometimes, it is convenient to define the singular set to be the set of points, whose local groups are different from the ineffective kernel of the action.
    \item Any ineffective quotient orbifold $\mathcal{O}$ can be turned into an effective orbifold by dividing the ineffective kernel of the group action. We denote the effective structure by $\mathcal{O}^{\mathrm{eff}}$.
    \item In general an (ineffective) orbifold is defined to be a proper étale Lie groupoid. We refer the interested reader to \cite{ALR07} for a detailed introduction to this topic. It is not known whether any orbifold given by this groupoid description can be realised as a quotient orbifold.
  
\end{enumerate}
   
\end{remark}

With this remark we conclude this subsection about orbifolds and will return to some general aspects of orbifolds in Section~\ref{sec:cohom}.

\subsection{Basic Properties of Eschenburg Orbifolds} \label{subsec:eschorb:prop} 
 We begin this subsection with the definition of Eschenburg $7$-orbifolds. For $z\in \mathrm{S}^1=\{z\in \C\mid \lvert z
 \rvert=1\}$ and  $p=(p_1,p_2, p_3)\in \Z^3$, let $z^p\coloneqq\diag (z^{p_1},z^{p_2}, z^{p_3})\in \U(3)$. Given $(p, q)\in \Z^3\times \Z^3$ with $\sum p_i =\sum q_i$, 
the biquotient action of $\mathrm{S}^1$ on $\SU(3)$ is  defined as $$z\ast g\coloneqq z^pg\bar{z}^q.$$ 
We denote the quotient space  by $\ESO = \SU(3)\sslash \mathrm{S}^1_{p,q}$. The action is free if and only if 
\begin{align*}
    \gcd(p_1 -q_{\sigma(1)}, p_2-q_{\sigma(2)}) = 1 \text{ for any } \sigma \in S_3.
\end{align*}
Hence in this case $\ESO$ is a $7$-dimensional manifold. If we relax this condition to be almost free, which is equivalent to
\begin{align*}
    \gcd(p_1 -q_{\sigma(1)}, p_2-q_{\sigma(2)}) \neq 0 \text{ for any } \sigma \in S_3,
\end{align*}
then $\ESO$ is an orbifold of dimension $7$. 

We remark that if the freeness of the action is not particularly important to us, we simply refer to 
$\ESO$ as an Eschenburg orbifold, regardless of whether the action is free or almost free.

Now for $(a, b)\in \Z^3\times \Z^3$, with $\sum a_i =\sum b_i$, consider the action of $\rm S^1$ on $\ESO$ given by
\begin{align*}
  z\ast \lbrack g\rbrack\coloneqq \lbrack z^a g\bar{z}^b\rbrack.  
\end{align*} 
This action is well-defined because the $\rm S^1_{p, q}$-action on $\SU(3)$ commutes with the $\rm S^1_{a, b}$-action.  Alternatively,  we can define a biquotient  $\T$-action on $\SU(3)$ by 
\begin{align*}
  (z, w)\ast  g\coloneqq  z^pw^a g\bar{z}^q\bar{w}^b.  
\end{align*}
Note that the $\T$-action on $\SU(3)$ is almost free if and only if the $\rm S^1_{a, b}$-action on $\ESO$ is almost free. By \cite[Theorem~A]{FZ07}, the $\T$-action on $\SU(3)$ is almost free if and only if \begin{align*}
    p-q_\sigma \text{ and } a-b_\sigma \text{ are linearly independent, for all }\sigma \in S_3. 
\end{align*}
In this case, the resulting quotient space $\SU(3)\sslash \T$ is an orbifold. We call this orbifold an Eschenburg $6$-orbifold and denote it by $\EO$. 

Before we proceed, we recall some operations that yield an equivalent Eschenburg orbifold. It is worth noting that these operations are quite useful for simplifying the proofs of the main theorems by converting the torus parameters into more tractable ones.

\begin{proposition}\cite[Proposition~5.3.2]{Y14}\label{Prop:equivalent}
\begin{enumerate}
    \item $\mathcal{O}_{p,q}^{a,b} \cong \mathcal O_{p^\prime,q^\prime}^{a^\prime, b^\prime}$, whenever $(p^\prime, a^\prime, q^\prime, b^\prime)$ equals the following parameters:
	\begin{enumerate}
		\item \label{Prop:equivalent1} $(q,b,p,a)$
		\item \label{Prop:equivalent3} $(p+\bar d, a+ \bar c, q+\bar d, b + \bar c )$, for $d,c\in \mathbb Z$ and $\bar d= d \cdot (1,1,1)$
		\item  \label{Prop:equivalent4} $(p_\sigma,a_\sigma,q_\tau,b_\tau)$ for $\sigma, \tau \in \mathcal S_3$
		\item \label{Prop:equivalent5} For some $A \in \Gl(2,\mathbb Z)$:
            \begin{align*}
              \begin{pmatrix}
                    p^\prime \\a^\prime
                \end{pmatrix} = A\cdot \begin{pmatrix}
                    p \\a
                \end{pmatrix}, \quad \begin{pmatrix}
                    q^\prime \\b^\prime
                \end{pmatrix} = A \cdot \begin{pmatrix}
                    q\\b
                \end{pmatrix}.
            \end{align*}
	\end{enumerate}
 \item \label{Prop:equivalent2} $(\mathcal{O}_{p,q}^{a,b})^{\mathrm{eff}} \cong \mathcal (O_{p^\prime,q^\prime}^{a^\prime, b^\prime})^{\mathrm{eff}}$, if we additionally allow     $(\mu p, \lambda a, \mu q, \lambda b)$ for $\mu, \lambda \in \mathbb Q\backslash\lbrace 0 \rbrace$

\end{enumerate}  
\end{proposition}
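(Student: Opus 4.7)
The plan is to establish each equivalence by exhibiting an explicit diffeomorphism $\varphi\colon \SU(3)\to \SU(3)$ that intertwines the biquotient action with the original parameters and the biquotient action with the new parameters, so that $\varphi$ descends to an isomorphism between the corresponding orbifold quotients. The guiding principle is that each prescribed operation on the parameters either (i) reparametrizes the acting $2$-torus, (ii) is absorbed by a left/right translation on $\SU(3)$, or (iii) produces a scalar (hence central) factor that cancels out.

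For (1a), the involution $\varphi(g) = g^{-1}$ sends $z^p w^a g \bar z^q \bar w^b$ to $z^q w^b g^{-1} \bar z^p \bar w^a$, which swaps the roles of $(p,a)$ and $(q,b)$. For (1b), one observes that $z^{\bar d} w^{\bar c} = z^d w^c \cdot \Id$ is central in $\SU(3)$; consequently the factors $z^{\bar d}w^{\bar c}$ and $\bar z^{\bar d}\bar w^{\bar c}$ appearing in $z^{p+\bar d}w^{a+\bar c}g \bar z^{q+\bar d}\bar w^{b+\bar c}$ cancel, so the action is literally unchanged.

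For (1c), let $P_\sigma \in \U(3)$ be the permutation matrix defined by $P_\sigma e_i = e_{\sigma(i)}$, so that $P_\sigma^{-1} z^p P_\sigma = z^{p_\sigma}$ and similarly for $w^a$. After rescaling $P_\sigma$ and $P_\tau$ by a sign if necessary, we obtain elements $M, N\in \SU(3)$ with $\det M = \det N = 1$; then $\varphi(g) = M^{-1} g N$ restricts to a map $\SU(3)\to\SU(3)$ and satisfies $\varphi(z^p w^a g \bar z^q \bar w^b) = z^{p_\sigma}w^{a_\sigma}\varphi(g)\bar z^{q_\tau}\bar w^{b_\tau}$, as required. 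For (1d), any $A\in \Gl(2,\Z)$ induces an automorphism $\Phi_A$ of the $2$-torus, and a direct expansion of $z^{p'}w^{a'}g\bar z^{q'}\bar w^{b'}$ with $(p',a')^T = A(p,a)^T$ and $(q',b')^T=A(q,b)^T$ shows that this action coincides with the original one evaluated at $\Phi_A(z,w)$. Since $\Phi_A$ is a bijection, the orbits agree and the identity on $\SU(3)$ induces the desired orbifold isomorphism.

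Finally, for (2), rational scaling by $\mu, \lambda\in \Q\setminus\{0\}$ (whenever the resulting parameter vectors are still integer) modifies the acting torus only by a finite cover or quotient, and this change affects nothing beyond the ineffective kernel of the action. After passing to effective structures the two orbifolds coincide. The main technical points are the sign adjustment ensuring $M, N\in \SU(3)$ in (1c) and the careful tracking of the ineffective kernel in (2); the remaining verifications reduce to direct manipulation of the defining formulas.
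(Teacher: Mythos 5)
Your proof is correct. The paper itself gives no proof of this proposition --- it is quoted from Yeroshkin's thesis \cite[Proposition~5.3.2]{Y14} --- and your argument is exactly the standard one underlying that reference: (1a) is the inversion $g\mapsto g^{-1}$, (1b) is cancellation of the central factors $z^{\bar d}w^{\bar c}$, (1c) is conjugation/translation by signed permutation matrices (the same sign trick the paper uses for its $\tau_r$ and $T_\sigma$), (1d) is precomposition with the torus automorphism determined by $A$ (more precisely by $A^{T}$, an immaterial distinction), and (2) is precomposition with a finite covering of the torus, which changes only the ineffective kernel and hence not the effective quotient. All the equivariant diffeomorphisms you exhibit cover group isomorphisms (or surjections with finite kernel in case (2)), so they do descend to orbifold isomorphisms of the (effective) quotients as claimed.
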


\begin{remark}\label{Rem:eff_orb} Let $\EO$ be an ineffective Eschenburg orbifold. By Proposition \ref{Prop:equivalent}, we can find parameters $p^\prime, q^\prime,a^\prime,b^\prime$ such that $(\EO)^{\mathrm{eff}} \cong \mathcal{O}_{p^\prime,q^\prime}^{a^\prime,b^\prime}$ (see \cite[Corollary 5.3.3]{Y14}).
   Note further that $\gcd(p_1, \ldots, q_3)=1=\gcd(a_1, \ldots, b_3)$, if $\T$ acts effectively on $\SU(3)$.
\end{remark}

Recall that when two of the integers in $p$ or in $q$  of an  Eschenburg orbifold $\ESO$  are equal, then there exists  an $\SU(2)\times\mathrm{T}^2$ action  on $\SU(3)$  commuting with the $\S^1_{p, q}$ action, resulting in a $2$-dimensional quotient. In other words, in this case, $\ESO$ admits a cohomogeneity $2$ action.  Up to equivalence, the parameters are given by $$p=(c, d, e)\quad q=(c+d+e, 0, 0).$$
The action is free if and only if $c, d, e$ are pairwise relatively prime \cite[p.~5]{Chinburg2007}. 

In the following remark we show that every Eschenburg $6$-orbifold can be obtained as a quotient space of an  isometric almost free biquotient action of an  $\S^1$ on a cohomogeneity $2$ Eschenburg $7$-orbifolds:
 
\begin{remark}\label{R:Change_of_Parameteres}
   By  Operations~\eqref{Prop:equivalent3}  and \eqref{Prop:equivalent5} one can find the following representation for  $\EO$:
\begin{align}\label{EQ:Simpler_reparametrization}
		p = (c,d ,e), \quad q=(c+d+e, 0,0) \quad a = (a_1, a_2, a_3), \quad b=(0,b_2,b_3).
	\end{align}
 More precisely, let $g = \gcd(q_2-q_3, b_2 - b_3)$, and $\alpha = (q_2 - q_3)/g$ and $\beta = (b_3-b_2)/g$. Then $\gcd(\alpha,\beta) = 1$ and there exist $r,s \in \mathbb Z$, such that $r\beta-s\alpha = 1$. We set 
	\begin{align*}
		A = \begin{pmatrix}
			\beta &\alpha  \\
			s & r
		\end{pmatrix}. 
	\end{align*}
	 By applying $A$ to the parameters, we get $q^\prime_2 = q^\prime_3$. By Part~\eqref{Prop:equivalent3} we then obtain the parameters given in \eqref{EQ:Simpler_reparametrization}.
\end{remark}

\subsection{On the singular set of $\EO$}\label{SS:singular set}
In this subsection we give a description of the singular set  $\Sigma(\EO)$ of the Eschenburg orbifold.  

By \cite[Theorem~A]{FZ07}, the singular set $\Sigma(\EO)$ is the union of at most nine
orbifold $2$-spheres and six points which are arranged as depicted in Figure~\ref{fig:sing:locus}.

Let  $\pi \colon \SU(3) \to \EO$ be the quotient map. The orbifold $2$-spheres $\bb S^2_{ij}$ from Figure \eqref{fig:sing:locus} lifts to   $\U(2)_{ij}$ in $\SU(3)$ under $\pi$, where  
\begin{align}\label{Eq:U(2)_ij}
    \U(2)_{ij} = \left\lbrace \tau_i \begin{pmatrix}
        A &0\\0& \overline{\det A}\end{pmatrix} \tau_j\, \colon \, A \in \U(2)
     \right \rbrace
\end{align}
with  $\tau_r \in \O(3)$ a linear map interchanging the $r$-th and the third vector of the canonical basis for $r = 1,2$ and $\tau_3 = -I$. Furthermore, the six points $\ast_\sigma$ lift to the tori $T_\sigma$ given by
\begin{align*}
    T_\sigma = \mathrm{sgn}(\sigma)\sigma^{-1} \diag(z,w,\bar z \bar w) \text{ with } \sigma \in S_3.
\end{align*}
 Any $\U(2)_{ij}$ contains exactly two tori $T_{\sigma}$ and $T_{\sigma^\prime}$ with $\sigma(i) = \sigma^\prime(i) = j$ and every $T_{\sigma}$ is contained in exactly three $\U(2)_{ij}$. We say that two vertices $\ast_\sigma$ and $\ast_\tau$ have the same parity, if the permutations $\sigma$ and $\tau$ have the same parity. 

  The action of  $\T$  on $\U(2)_{ij}$    is equivalent to the biquotient  action 
\begin{align*}
   (z,w)\ast A = z^{(p_{i_1},p_{i_2})}w^{(a_{i_1},a_{i_2})}A\,\bar w^{(b_{j_1},b_{j_2})}\bar z^{(q_{j_1},q_{j_2})} = \begin{pmatrix}
       z^{p_{i_1}-q_{j_1}} w^{a_{i_1}-b_{j_1}} a_{11} &  z^{p_{i_1}-q_{j_2}} w^{a_{i_1}-b_{j_2}}a_{12}\\
        z^{p_{i_2}-q_{j_1}} w^{a_{i_2}-b_{j_1}}a_{21}&  z^{p_{i_2}-q_{j_2}}w^{a_{i_2}-b_{j_2}} a_{22}
   \end{pmatrix}
\end{align*}
on $\U(2)_{33}$, where $\lbrace i,i_1,i_2\rbrace = \lbrace j,j_1,j_2\rbrace = \lbrace 1,2,3\rbrace$. This action will appear in Section~\ref{sec:cohom}, where we compute the orbifold cohomology groups of $\bb S^2_{ij}$. 

Note that the singular set of an Eschenburg orbifold $\EO$ is possibly a subset of the graph shown in Figure~\eqref{fig:sing:locus}. To differentiate the entire graph from the actual  singular set, we use the notation $C(\EO)\coloneqq \bigcup_{i,j} \pi(\U(2)_{ij})$ for the complete graph. 

To understand the singular set of $\EO$,
 we need to examine the isotropy groups of $\U(2)_{ij}$ and $T_\sigma$
  and the ineffective kernel of the action.
Let $V,W \in \bb Z^n$, then we define
\begin{align*}
   g(V,W) \coloneqq \gcd(v_1, \ldots, v_n, w_1, \ldots, w_n) \text{ and } N(V,W) \coloneqq \gcd{\lbrace v_iw_j - v_j w_i \, \vert\, 1 \le i<j\le n\rbrace} 
\end{align*}
Note that the action on $\SU(3)$ is given as follows:
\begin{align*}
    z^pw^a A \bar w^b \bar z^q = \begin{pmatrix}
        z^{p_1-q_1}w^{a_1-b_1}a_{11} &z^{p_1-q_2}w^{a_1-b_2}a_{12}&z^{p_1-q_3}w^{a_1-b_3}a_{13}\\
        z^{p_2-q_1}w^{a_1-b_1}a_{21}&z^{p_2-q_2}w^{a_2-b_2}a_{22}&z^{p_2-q_3}w^{a_2-b_3}a_{23}\\
        z^{p_3-q_1}w^{a_3-b_1}a_{31}&z^{p_3-q_2}w^{a_3-b_2}a_{32}&z^{p_3-q_3}w^{a_3-b_3}a_{33}
    \end{pmatrix}.
\end{align*}
Since $p_i - q_i = q_{i_1}-p_{i_2} + q_{i_2}-p_{i_1}$ for $\lbrace i,i_1,i_2\rbrace = \lbrace 1,2,3\rbrace$, it  is clear  that $(z,w)$ is in the kernel of the action if and only if
\begin{align*}
    z^{p_i-q_j}w^{a_i-b_j} = 1 \text{ for all }i\neq j.
\end{align*}
This means that the kernel of the action is given by the kernel of the map
\begin{align*}
    \phi\colon T^2 \to T^6, \, (z,w) \mapsto (z^{p_i-q_j}w^{a_i-b_j})_{i\neq j},
\end{align*} 
which is given by
 $$\{(\alpha, \beta)\in \R^2\mid d_e\phi ((\alpha,\beta)) \in \Z^6\}/\Z^2,$$
 
We set $P \in \bb Z^6$ to have the components $p_i-q_j$ and $A \in \bb Z^{6}$ to have $a_i-b_j$ with $i\neq j$ and the same index ordering for both. Then $d_e\phi$ is a $6 \times 2$--matrix with columns given by $P$ and $A$. There exist $B \in \Gl_2(\Z)$ and $C\in \Gl_6(\Z)$, such that $C \cdot d_e\phi \cdot B$ has the following diagonal form, which is also known as the Smith Normal Form of $ d_e\phi$. The first diagonal entry is given by the $\gcd$ of the $1$-minors, that is $g(P,A)$, and the second one, by  $N(P,A)/g(P,A)$, since $N(P,A)$ is the $\gcd$ of the $2$-minors:
\begin{align*}
    \begin{pmatrix}
        g(P,A) &0\\
        0 & N(P,A)/g(P,A)\\
        \vdots & \vdots \\
        0&0
    \end{pmatrix}
\end{align*}
Hence the ineffective kernel is isomorphic to 
\begin{align*}
    \bb Z_{g(P,A)} \oplus \bb Z_{N(P,A)/g(P,A)}.
\end{align*}
Similarly, we recover the isotropy groups at $\U(2)_{ij}$: For this, we set  $V=(p_{i_1}-q_{j_1},p_{i_1}-q_{j_2},p_{i_2}-q_{j_1},p_{i_2}-q_{j_2})$ and $ W=(a_{i_1}-b_{j_1},a_{i_1}-b_{j_2},a_{i_2}-b_{j_1},a_{i_2}-b_{j_2})$. Let $g_{ij} = g(V,W)$ and $N_{ij} = N(V,W)$,  the isotropy group on $\U(2)_{ij}$ is given by
\begin{align*}
    \Gamma_{ij} = \bb Z_{g_{ij}} \oplus \bb Z_{N_{ij}/g_{ij}}.
\end{align*}
By \cite[Theorem 5.3.7]{Y14} or the same argument as above applied to $V = (p_1-q_{\sigma(1)}, p_2-q_{\sigma(2)})$ and $W = (a_1-b_{\sigma(1)}, a_2-b_{\sigma(2)})$ the isotropy groups at the tori $T_\sigma$ are  given by 
\begin{align*}
    \Gamma_\sigma = \Z_{g_\sigma} \oplus \Z_{N_\sigma/g_\sigma}, 
\end{align*} 
 where 
$$g_\sigma = \gcd(p_1 -q_{\sigma(1)}, p_2 -q_{\sigma(2)},a_1 -b_{\sigma(1)},a_2 -b_{\sigma(2)}).$$ 
and
\begin{align}\label{Eq:order_isotropy}
    N_{\sigma}=\lvert (p_{1}-q_{\sigma(1)})(a_{2}-b_{\sigma(2)})-(p_{2}-q_{\sigma(2)})(a_{1}-b_{\sigma(1)})\rvert\, \text{ for } \sigma\in S_3. 
\end{align}
By \cite[Remark~3.11]{FZ07}, the action of $\T$ on $\SU(3)$ is almost free if and only if $ N_{\sigma}\neq 0$, for all $\sigma  \in S_3$.

\begin{remark}
     The isotropy groups are not affected by the operations of Proposition \ref{Prop:equivalent}, except for \eqref{Prop:equivalent2}, which may result in  an ineffective kernel and \eqref{Prop:equivalent4}, which permutes the isotropy groups (see the proof of Theorem \ref{thm:main:sing} for a precise statement in this case). The effectiveness of the action has an impact on the topological properties such as the cohomology ring  of $\EO$ (see Section \ref{sec:cohom}).
\end{remark}

As we will see in Section~\ref{sec:singset},  the equivalent representation \eqref{EQ:Simpler_reparametrization} for $\EO$ is crucial to the proof of Theorem~\ref{thm:main:sing}. From \eqref{Eq:order_isotropy} it easily follows that  
\begin{lemma}\label{L:Change_of_Parameteres}
   For the representation in \eqref{EQ:Simpler_reparametrization}, 
 the orders of the isotropy groups at $T_\sigma$, up to sign,  are given by 
 \begin{align}
    l_{Id} &= -(d+e)(a_2-b_2)-da_1 \label{eq:Id}\\
        	l_{(23)} &=  -(d+e)(a_2 - b_3) - da_1 \label{eq:(23)}\\
         l_{(132)} &= ca_2 + (c+e)(a_1 - b_3)\label{eq:132}\\
		l_{(12)} &= ca_2 + (c+e)(a_1-b_2) \label{eq:12} \\
		l_{(123)} &= c(a_2-b_3) - d (a_1 - b_2) \label{eq:123}\\
		l_{(13)} &= c(a_2-b_2) -d (a_1-b_3). \label{eq:13}
    \end{align}
\end{lemma}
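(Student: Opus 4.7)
The proof is a direct computation. The plan is to substitute the special parametrization \eqref{EQ:Simpler_reparametrization}, namely $p = (c,d,e)$, $q=(c+d+e, 0, 0)$, $a=(a_1,a_2,a_3)$, $b=(0,b_2,b_3)$, into the general formula \eqref{Eq:order_isotropy}
\[
N_{\sigma}=\bigl|(p_{1}-q_{\sigma(1)})(a_{2}-b_{\sigma(2)})-(p_{2}-q_{\sigma(2)})(a_{1}-b_{\sigma(1)})\bigr|
\]
for each $\sigma \in S_{3}$, and read off the values $l_{\sigma}=\pm N_{\sigma}$.

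First I would simplify the two factors $p_{1}-q_{\sigma(1)}$ and $p_{2}-q_{\sigma(2)}$ once and for all. With the above parametrization one has
\[
p_{1}-q_{1}=-(d+e),\qquad p_{1}-q_{2}=p_{1}-q_{3}=c,\qquad p_{2}-q_{1}=-(c+e),\qquad p_{2}-q_{2}=p_{2}-q_{3}=d,
\]
so each $\sigma$ selects exactly one entry of each row. The factors $a_{i}-b_{\sigma(i)}$ are similarly immediate from the parametrization of $a,b$.

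Next I would loop through the six permutations. For $\sigma=\mathrm{Id}$ the formula gives $-(d+e)(a_{2}-b_{2})-d\,a_{1}$, matching \eqref{eq:Id}; for $\sigma=(23)$ only the second factor changes from $a_{2}-b_{2}$ to $a_{2}-b_{3}$, giving \eqref{eq:(23)}. For $\sigma=(12)$ and $\sigma=(132)$ the first factor becomes $c\cdot a_{2}-(-(c+e))(a_{1}-b_{2})=c a_{2}+(c+e)(a_{1}-b_{2})$ and analogously $c a_{2}+(c+e)(a_{1}-b_{3})$, matching \eqref{eq:12} and \eqref{eq:132}. Finally $\sigma=(123)$ and $\sigma=(13)$ yield the clean expressions $c(a_{2}-b_{3})-d(a_{1}-b_{2})$ and $c(a_{2}-b_{2})-d(a_{1}-b_{3})$, proving \eqref{eq:123} and \eqref{eq:13}.

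There is no real obstacle here: the only thing to be careful with is the convention for $\sigma(1)$ and $\sigma(2)$ under the cyclic permutations $(123)$ and $(132)$, where an index swap in bookkeeping could alter signs. Taking absolute values (as in \eqref{Eq:order_isotropy}) absorbs the sign ambiguity and justifies the phrase \emph{up to sign} in the statement.
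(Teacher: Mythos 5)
Your proposal is correct and is exactly the computation the paper intends: the paper offers no written proof, stating only that the lemma "easily follows" from \eqref{Eq:order_isotropy}, and your direct substitution of the parametrization \eqref{EQ:Simpler_reparametrization} into that formula, with the convention $(ijk)\colon i\mapsto j\mapsto k\mapsto i$ fixing $\sigma(1),\sigma(2)$ in each case, reproduces all six expressions. The remark that "up to sign" absorbs the absolute value in $N_\sigma$ is also the right reading of the statement.
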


The following lemma, which identifies the structure of the singular set in certain special cases,  plays a key role in the proof of Theorem~\ref{thm:main:sing}. In fact, it generalizes \cite[Lemma 5.3.11]{Y14}, and   since the same proof applies, we will omit  one here.
\begin{lemma}\label{L:b_2=b_3}
Let $\EO$ be an Eschenburg orbifold. If with $q_i = q_j$ and $b_i = b_j$ or $p_i = p_j$ and $a_i = a_j$ for some $i,j = 1,2,3$, i.e. $\EO$ admits a cohomogeneity $2$ action by some $S^1 \cdot \SU(2)$. Then the singular set  only consists of  smooth totally geodesic $2$-Spheres.
\end{lemma}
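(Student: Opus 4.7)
The plan is to exploit the parameter symmetry underlying the cohomogeneity-$2$ hypothesis, and reduce the smoothness of the singular spheres to a direct computation with the isotropy formulas, paralleling \cite[Lemma 5.3.11]{Y14}.

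First, by Proposition~\ref{Prop:equivalent} and Remark~\ref{R:Change_of_Parameteres}, I would reduce to the normalized form $p = (c,d,e)$, $q = (c+d+e, 0, 0)$, $a = (a_1,a_2,a_3)$, $b = (0, b, b)$, in which the cohomogeneity-$2$ hypothesis $q_i = q_j$, $b_i = b_j$ becomes $q_2 = q_3$ and $b_2 = b_3$ (the remaining case $p_i = p_j$, $a_i = a_j$ being handled symmetrically via operation~\eqref{Prop:equivalent1}). By Remark~\ref{Rem:eff_orb} I may further assume that the $\T$-action is effective. The formulas of Lemma~\ref{L:Change_of_Parameteres}, together with the parallel expressions for $g_\sigma$, then yield the vertex pairing $\Gamma_\sigma = \Gamma_{(23)\sigma}$ for every $\sigma \in S_3$, since replacing $\sigma$ with $(23)\sigma$ swaps indices where the $q$- and $b$-entries agree, leaving the isotropy unchanged. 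The six vertices $\ast_\sigma$ thus collapse into three pairs $\{Id,(23)\}$, $\{(12),(132)\}$, $\{(13),(123)\}$, each pair joined by a unique common edge $\bb S^2_{\sigma^{-1}(1),\,1}$.

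Second, on each \emph{fixed edge} $\bb S^2_{i,1}$, the equalities $q_2 = q_3$ and $b_2 = b_3$ cause two entries of the vectors $V_{i,1}$, $W_{i,1}$ (defined in Subsection~\ref{SS:singular set}) to coincide. Consequently, the $2\times 2$ minors of $(V_{i,1}\mid W_{i,1})$ reduce to a single invariant coinciding with the vertex-isotropy order from Lemma~\ref{L:Change_of_Parameteres}, and the analogous collapse occurs for the $g$-part. Hence $\Gamma_{i,1} = \Gamma_\sigma = \Gamma_{(23)\sigma}$, so whenever $\bb S^2_{i,1}$ is singular it is automatically a smooth $2$-sphere. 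In particular every singular vertex $\ast_\sigma$ lies on the smooth singular sphere $\bb S^2_{\sigma^{-1}(1),\,1}$, which rules out isolated singular points.

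Finally, for the remaining six edges $\bb S^2_{i,2}$ and $\bb S^2_{i,3}$ the $(23)$-symmetry already gives $\Gamma_{i,2} = \Gamma_{i,3}$, but the two endpoints of each such edge belong to distinct pairs, so their isotropy match requires additional input; this is the main technical obstacle. The key observation is that each $2\times 2$ minor of $(V_{i,j}\mid W_{i,j})$ (for $j\in\{2,3\}$) can be written as an integer linear combination of the vertex expressions from Lemma~\ref{L:Change_of_Parameteres}, so the gcd of these minors---which equals $\Gamma_{i,j}$---is controlled from above by the gcd of the two endpoint isotropies; effectiveness of the $\T$-action then prevents extra common divisors from arising, yielding $\Gamma_{i,j} = \Gamma_\sigma = \Gamma_\tau$ whenever the edge is singular. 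A more conceptual alternative is to use the commuting $\U(2)$-action on $\EO$ granted by the cohomogeneity-$2$ hypothesis: since the $\T$-fixed set is discrete and $\U(2)$ is connected, each $\ast_\sigma$ is $\U(2)$-fixed, the singular set is $\U(2)$-invariant, and an orbit-type analysis shows that its connected components must be either $\U(2)$-fixed vertices or smooth $\U(2)$-orbit $2$-spheres joining two $\U(2)$-fixed vertices.
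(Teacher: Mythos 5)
The paper itself omits the proof (deferring to \cite[Lemma~5.3.11]{Y14}), so there is nothing to compare line by line; judged on its own terms, your normalization, the pairing $\Gamma_\sigma=\Gamma_{(23)\sigma}$, and the identity $\Gamma_{i1}=\Gamma_\sigma=\Gamma_{(23)\sigma}$ on the three edges $\bb S^2_{i1}$ are all correct, and they do rule out isolated singular vertices. The genuine gap is your step for the remaining six edges $\bb S^2_{i2},\bb S^2_{i3}$, and both arguments you offer there fail. The conclusion you aim for, $\Gamma_{ij}=\Gamma_\sigma=\Gamma_\tau$, is not the right target: the two endpoints of such an edge lie in \emph{different} pairs and generically have different local groups. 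For example $p=(1,2,3)$, $q=(6,0,0)$, $a=(6,2,2)$, $b=(0,5,5)$ is effective and almost free, and the endpoints $\ast_{(12)}$, $\ast_{(123)}$ of $\bb S^2_{12}$ have local groups $\Z_6$ and $\Z_5$. Your ``key observation'' that every $2\times 2$ minor of $(V_{ij}\mid W_{ij})$ is an integral combination of the two endpoint expressions is also false: for $p=(0,1,1)$, $q=(2,0,0)$, $a=(9,0,-3)$, $b=(0,3,3)$ one has $N_{(12)}=N_{(123)}=6$ while the minors for $\U(2)_{12}$ have gcd $3$, so $\gcd(N_{(12)},N_{(123)})$ does not divide $N_{12}$. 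Finally, the alternative via the commuting $\U(2)$ rests on a false premise: the vertices $\ast_\sigma$ are \emph{not} $\U(2)$-fixed. The stabilizer of $\ast_\sigma$ in the commuting $\U(2)$ is a maximal torus, and its orbit is precisely the $2$-sphere $\bb S^2_{\sigma^{-1}(1),1}$.

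What is actually true, and what closes the gap, is that for $j\in\{2,3\}$ the generic local group of $\bb S^2_{ij}$ coincides with the ineffective kernel of the $\T$-action, so these six edges are simply not singular in the effective orbifold. One can see this by hand: the stabilizer of a generic point of $\U(2)_{ij}$ is cut out by the conditions $z^{p_k-q_l}w^{a_k-b_l}=1$ at its nonzero matrix positions, and with $q_2=q_3$, $b_2=b_3$ and $\sum(p_k-q_k)=\sum(a_k-b_k)=0$ these generate the same subgroup of $\T$ as the full set of off-diagonal conditions defining the kernel. Alternatively, and closer in spirit to your second attempt: the commuting right $\U(2)$-action preserves $\T$-isotropy; each $\U(2)_{i1}$ is a single right-$\U(2)$-orbit, which gives the constancy of the local group along $\bb S^2_{i1}$ (your steps (2) and (3)) for free; whereas for $j\in\{2,3\}$ the saturation $\U(2)_{ij}\cdot\U(2)$ has dimension at least $5$, so a generic isotropy on $\U(2)_{ij}$ strictly larger than the kernel would force a singular set of dimension at least $3$ in $\EO$, contradicting \cite[Theorem~A]{FZ07}. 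Either of these completes the proof; as written, your step (4) does not.
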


\section{  A Geometric Approach to the Positive Sectional Curvature Conditions on $\EO$}\label{sec:poscurv}
In this section we revisit the positive sectional curvature property for the Eschenburg $6$-orbifolds $\EO$ as stated in \cite{Y14}. In Subsection~\ref{subsec:poscurv} we provide a simple geometric interpretation of this condition, which forms  the foundation for our strategy to prove  Theorem~\ref{thm:main:sing}. This strategy is facilitated by some basic yet effective tools from convex geometry, with the details of their application explained in Subsection~\ref{SS:Separating_Hyperplane}.

\subsection{Revisiting positive  sectional curvature of $\EO$}\label{subsec:poscurv}
  We endow $\SU(3)$ with a Cheeger deformation of the bi-invariant metric along one of the subgroups $\U(2)_{ii}$ for $i = 1,2,3$ and equip $\ESO$ and $\EO$ with the corresponding submersion metrics turning them into Riemannian orbifolds. On $\ESO$ these metrics were originally found and studied by Eschenburg in \cite{Esch82} and provided a rich class of examples of non-negatively and positively curved manifolds (see \cite[Sections~2, 3]{Kerin2011} for a detailed construction of  Eschenburg's metrics and their curvature properties). 

There exists an Eschenburg metric with positive sectional curvature on  $\ESO$ if and only if 
\begin{align*}
    q_1,q_2,q_3 \notin[\min p_j, \max p_j].
\end{align*}
Note that switching the roles of $p$ and $q$ will induce an orbifold diffeomorphism between $E^7_{p,q}$ and $ E^7_{q,p}$. It might happen  that $E^7_{p,q}$ does not admit a positively curved Eschenburg metric, but $E^7_{q,p}$ does. \\

We now turn our attention to  the Eschenburg $6$-orbifolds $\EO$  which admit an Eschenburg metric with positive sectional curvature. In fact, every such orbifold is the base space of an orbifold fibration with a positively curved Eschenburg $7$-orbifold as a total space:
 \begin{theorem}[\cite{Y14}, Theorem 5.3.6] \label{thm:poscurv:yeroshkin}
    $\EO$ admits an Eschenburg metric with positive sectional curvature  if and only if there exist parameters $u,v,x,y \in \bb Z^3$, such that $E^7_{u,v}$ admits an Eschenburg metric with positive sectional curvature and $\EO \cong E^7_{u,v}/\Sph^1_{x,y}$. Moreover, the Eschenburg metric on $E^7_{u,v}$ is induced by the same metric on $\SU(3)$ as the metric of $\EO$.
\end{theorem}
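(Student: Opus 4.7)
The plan is to exploit the two-step structure of the $\T$-action on $\SU(3)$. Any primitive vector $\lambda = (\lambda_1, \lambda_2) \in \Z^2$ determines a subcircle $\Sph^1_\lambda \subset \T$ acting on $\SU(3)$ by $t \ast A = t^{\lambda_1 p + \lambda_2 a}\, A\, t^{-(\lambda_1 q + \lambda_2 b)}$, whose quotient is $E^7_{u(\lambda),v(\lambda)}$ with $u(\lambda) = \lambda_1 p + \lambda_2 a$ and $v(\lambda) = \lambda_1 q + \lambda_2 b$. Completing $\lambda$ to a $\Z$-basis $\{\lambda,\mu\}$ of $\Z^2$, the complementary subcircle descends to an almost-free isometric $\Sph^1$-action on $E^7_{u(\lambda), v(\lambda)}$ whose quotient is $\EO$. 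Hence all candidate parameters $(u,v,x,y)$ in the statement arise in this way, and since both Eschenburg metrics are induced from the same Cheeger deformation of the bi-invariant metric on $\SU(3)$, the ``moreover'' clause is automatic.

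For the direction $(\Leftarrow)$, the projection $E^7_{u,v} \to \EO$ is an orbifold Riemannian submersion with one-dimensional fibres. By the O'Neill formula on the regular set,
\begin{align*}
    \sec_{\EO}(\Pi) = \sec_{E^7_{u,v}}(\widetilde{\Pi}) + 3\,\|A(X,Y)\|^2,
\end{align*}
for every horizontal $2$-plane $\Pi = \vspan\{X,Y\}$ with horizontal lift $\widetilde{\Pi}$; both summands are non-negative and the first is strictly positive by hypothesis, so $\EO$ is positively curved on its regular set. The extension to the singular set follows by continuity in local orbifold charts. Conversely, for $(\Rightarrow)$ I would reduce the problem to producing a primitive $\lambda \in \Z^2$ for which $E^7_{u(\lambda),v(\lambda)}$ satisfies Eschenburg's positive curvature inequality $v(\lambda)_i \notin [\min_j u(\lambda)_j, \max_j u(\lambda)_j]$ for all $i$, since such a $\lambda$ then yields the desired factorisation (the remaining almost-freeness of $\Sph^1_\mu$ on $E^7_{u(\lambda),v(\lambda)}$ is inherited from the almost-freeness of the full $\T$-action on $\SU(3)$).

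To produce such $\lambda$, I would first recall the explicit characterisation of the zero-curvature $2$-planes of the Cheeger-deformed bi-invariant metric on $\SU(3)$ (cf.\ \cite{Esch82, Kerin2011}): they form a rigid family controlled by the $\U(2)$-deformation. Positive sectional curvature of $\EO$ is equivalent to the statement that none of these is $\T$-horizontal at any point of $\SU(3)$, which translates into an open algebraic condition on $(p,q,a,b)$. The heart of the argument is to leverage this to show that the set
\begin{align*}
    \Lambda_{\mathrm{good}} = \{\lambda \in \R^2 \colon v(\lambda)_i \notin [\min_j u(\lambda)_j, \max_j u(\lambda)_j]\text{ for every } i\}
\end{align*}
is a non-empty open cone, from which the existence of a primitive $\lambda \in \Z^2 \cap \Lambda_{\mathrm{good}}$ follows by density of rational rays. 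The main obstacle I anticipate is exactly this convex-geometric step: translating the infinitesimal positive curvature condition on $\EO$ into existence of a lattice direction realising Eschenburg's inequality. I expect it to require a careful case analysis after the normalisations of Proposition~\ref{Prop:equivalent} (e.g.\ the parametrisation from Remark~\ref{R:Change_of_Parameteres}), and it in spirit anticipates the Euclidean-triangle and separating-axis toolkit developed in Subsection~\ref{SS:Separating_Hyperplane} to prove Theorem~\ref{thm:main:sing}.
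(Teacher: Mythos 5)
You should first be aware that the paper offers no proof of this statement: it is quoted verbatim from \cite{Y14} (Theorem~5.3.6) and used as a black box, so there is nothing to compare line by line. That said, your structural setup is the right one and is consistent with how the paper works with the result: a primitive $\lambda\in\Z^2$ gives the subcircle $\Sph^1_{u(\lambda),v(\lambda)}$ with $u(\lambda)_j=\langle\lambda,P_j\rangle$, $v(\lambda)_i=\langle\lambda,Q_i\rangle$, the complementary circle descends to $E^7_{u(\lambda),v(\lambda)}$, and both submersion metrics come from the same Cheeger-deformed metric on $\SU(3)$. For the direction $(\Leftarrow)$ your O'Neill argument is essentially correct, but the appeal to ``continuity in local orbifold charts'' at singular points is not: continuity of an open condition only yields $\sec\ge 0$ in the limit. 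The clean argument avoids this entirely by working on $\SU(3)$: the $\T$-horizontal distribution is contained in the $\Sph^1_{u,v}$-horizontal distribution, so every $\T$-horizontal $2$-plane is already $\Sph^1_{u,v}$-horizontal and inherits positive curvature at every point, singular or not.

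The genuine gap is the direction $(\Rightarrow)$, which is the entire content of the theorem and which you explicitly leave as a plan (``the main obstacle I anticipate''). You must prove that $\Lambda_{\mathrm{good}}$ is non-empty, and this does not follow from the openness remarks you make; it is precisely the convex-geometric step that the paper carries out later, in a strengthened form, as Theorem~\ref{thm:pos:curv:fibration}. Concretely: by Corollary~\ref{C:Sec>0_Geometric}, positive curvature of $\EO$ (up to the swap $(p,q,a,b)\mapsto(q,p,b,a)$) gives an edge of one triangle disjoint from the other triangle; Theorem~\ref{THM_Separation_Axis} then produces a separating line parallel to an edge of one of the two convex bodies, hence with an \emph{integer} normal direction $\lambda$ that can be taken primitive --- no density-of-rational-rays argument is needed. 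This $\lambda$ a priori only pushes two of the $Q_i$ (or two of the $P_i$) out of the interval $J_P$; the fact that the \emph{third} vertex also projects outside is not automatic from separation and requires the shared-centroid argument of Lemma~\ref{P:Same_Centroid} (the centroid of the projected $P_j$'s lies in $J_P$, which forces the remaining projected vertex to the far side). Once all three inequalities $v(\lambda)_i\notin[\min_j u(\lambda)_j,\max_j u(\lambda)_j]$ hold, almost-freeness of $\Sph^1_\lambda$ comes for free since in particular $v(\lambda)_i\neq u(\lambda)_j$ for all $i,j$. Without this chain of steps your proof establishes only the easy half of the equivalence.
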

Note, however, that if $\EO$ admits a metric with positive curvature,  it is possible that  neither $\ESO$ nor  $\rm E^7_{a, b}$ admits positive curvature (see \cite[Example~5.3.1.]{Y14}). Therefore, given an Eschenburg orbifold $\EO$, it is not practical to consider the orbifold fibrations from some $7$-dimensional Eschenburg orbifolds over them to determine whether they admit positive curvature. Instead this property can be checked using the following theorem:
\begin{proposition}[\cite{Y14}, Proposition 5.3.13]\label{prop:poscurv}
   Equip $\EO$ with a metric induced by a Cheeger deformation along $\U(2)_{33}$. Then $\EO$ is positively curved, if and only if for each $t \in [0,1]$ and each triple $(\eta_1, \eta_2,\eta_3)$ with $\sum_i \eta_i = 1$ we have both 
   \begin{align*}
        (1-t)b_1+tb_2 \neq \sum \eta_i a_i \quad  \text{ or } \quad (1-t)q_1+tq_2 \neq \sum \eta_i p_i,
   \end{align*}
   and 
   \begin{align*}
        b_3 \neq \sum \eta_i a_i \quad  \text{ or } \quad q_3 \neq \sum \eta_i p_i.
   \end{align*}    
\end{proposition}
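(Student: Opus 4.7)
The plan is to reduce the positive-curvature question on $\EO$ to a curvature question on $\SU(3)$ via a Riemannian submersion, and then combine Eschenburg's classification of zero-curvature planes of the Cheeger-deformed metric on $\SU(3)$ with the horizontality condition imposed by the biquotient $\T$-action.

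First I would apply O'Neill's formula to the submersion $\SU(3)\to \EO = \SU(3)\sslash \T$. Since $\T$ acts isometrically on the Cheeger metric, $\EO$ has positive sectional curvature if and only if every 2-plane in $T_g\SU(3)$ that is perpendicular to both fundamental vector fields of the $\T$-action has positive curvature in the Cheeger-deformed metric on $\SU(3)$. Thus the question becomes whether any zero-curvature plane of this metric can be realised as a horizontal plane at some point of $\SU(3)$.

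Next I would invoke Eschenburg's explicit description of the zero 2-planes of the Cheeger deformation of the bi-invariant metric on $\SU(3)$ along $\U(2)_{33}$ (see Eschenburg's original paper and the detailed exposition by Kerin). Up to translation and adjoint action, the zero planes split into two families corresponding to the orthogonal decomposition $\mathfrak{su}(3) = \mathfrak{u}(2)_{33} \oplus \mathfrak{p}$: a first family parametrised by a triple $(\eta_1,\eta_2,\eta_3)$ with $\sum \eta_i = 1$ together with a parameter $t \in [0,1]$ (encoding position on a circle in the maximal torus of $\U(2)_{33}$), and a second family parametrised only by $\eta$ (coming from the $\U(1)$-factor orthogonal to $\U(2)_{33}$). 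Writing the two fundamental vector fields of the $\T$-action at $g$ as $Pg - gQ$ and $Ag - gB$ with $P=\diag(ip_1,ip_2,ip_3)$, $Q=\diag(iq_1,iq_2,iq_3)$, and analogously for $A,B$, and imposing perpendicularity of both fields to a zero plane of the first family, reduces the horizontality condition, by the independence of the two $\S^1$-factors of $\T$, to the pair of simultaneous equalities
\[
(1-t)q_1 + t q_2 = \textstyle\sum \eta_i p_i \quad\text{and}\quad (1-t)b_1 + t b_2 = \textstyle\sum \eta_i a_i.
\]
The second family produces $q_3 = \sum \eta_i p_i$ and $b_3 = \sum \eta_i a_i$ by the same computation.

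The main obstacle will be the Lie-theoretic bookkeeping required to verify the parametrisation of zero planes precisely, including tracking the Cheeger rescaling and the Gray--O'Neill $A$-tensor, and checking that after conjugating the base point $g$ into the maximal torus the horizontality constraints genuinely decouple into independent conditions on the $(p,q)$-parameters and the $(a,b)$-parameters. Once this decoupling is established, the statement follows by contraposition: $\EO$ fails to be positively curved exactly when there exist $t \in [0,1]$ and $\eta$ with $\sum \eta_i = 1$ making one of the two pairs of equalities above hold simultaneously, which is precisely the negation of the two stated alternatives.
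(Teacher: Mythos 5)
The paper does not actually prove this proposition; it is quoted verbatim from Yeroshkin's thesis (\cite[Proposition~5.3.13]{Y14}), so there is no in-paper proof to compare against. Your sketch follows the standard argument behind that cited result (O'Neill reduction to horizontal planes, Eschenburg's parametrisation of the zero-curvature planes of the Cheeger deformation along $\U(2)_{33}$, and the decoupling of the horizontality conditions for the two circle factors of $\T$), and the contraposition at the end correctly reproduces the stated ``both/or'' structure. The one place where you assert more than O'Neill gives you is the initial ``if and only if'': O'Neill's formula only yields that horizontal positive curvature upstairs implies positive curvature downstairs, and the converse direction requires the additional (true, but nontrivial) fact that for Cheeger deformations of the bi-invariant metric every horizontal zero-curvature plane projects to a zero-curvature plane, i.e.\ the vertizontal term $\tfrac34\Vert[X,Y]^v\Vert^2$ vanishes on such planes because zero-curvature planes of these metrics are spanned by suitably commuting vectors. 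You mention the $A$-tensor only as ``bookkeeping''; it is really the load-bearing step of the ``only if'' direction and should be stated as such, with a reference to Eschenburg's or Kerin's treatment where it is established.
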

In the same way as for the Eschenburg $7$-orbifolds, it is possible that the Eschenburg metric on $\EO$ does not have positive sectional curvature, but the Eschenburg metric on the equivalent orbifold $\mathcal{O}_{q, p}^{b, a}$ does. \\

After reviewing the known results regarding the positive curvature conditions on 
$\EO$, we now offer a geometric reformulation that is elegant in its simplicity.

To be more precise, we introduce the following  points in the Euclidean  plane: 
\begin{align*}
    P_i = \begin{pmatrix}
    p_i\\a_i
\end{pmatrix} \text{ and } Q_i = \begin{pmatrix}
    q_i\\b_i
\end{pmatrix} \text{ for } i = 1,2,3.
\end{align*}
Note that $\sum P_i = \sum Q_i$, since $\sum p_i = \sum q_i$ and $\sum a_i = \sum b_i$. Since permuting the parameters of $\EO$ is an equivalent change, $\EO$ has an Eschenburg metric with positive curvature if and only if there is a permutation $\sigma \in S_3$, such that $\mathcal{O}_{p,q_\sigma}^{a,b_\sigma}$ has positive curvature with an Eschenburg metric coming from a Cheeger deformation along $\U(2)_{33}$. From this we get the following reformulation of Proposition \ref{prop:poscurv}:
\begin{proposition} \label{prop:pos_curv_condition}
	 $\EO$ admits an Eschenburg metric with positive sectional curvature if and only if  there exists a permutation $\sigma$, such that for each $t \in \lbrack 0,1 \rbrack$ and each $\eta_1,\eta_2,\eta_3\ge 0$ with $\sum \eta_i = 1$ we have both
	\begin{align}
	(1-t) \cdot Q_{\sigma(1)}+ t\cdot Q_{\sigma(2)}\neq \sum_i \eta_i P_i
	\end{align}
 and
    \begin{align}
	Q_{\sigma(3)} \neq \sum_i \eta_i P_i.\label{lem:pos_curv_condition_2}
	\end{align}
\end{proposition}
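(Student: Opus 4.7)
The plan is to derive this reformulation directly from Proposition \ref{prop:poscurv} by combining it with the parameter equivalences of Proposition \ref{Prop:equivalent}, and then repackaging the resulting scalar disjunctions as vector inequalities in $\mathbb{R}^2$. The proof is essentially formal once one recognises the correct ``packaging''.

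First, I would observe that an Eschenburg metric on $\EO$ is a submersion metric coming from a Cheeger deformation of the bi-invariant metric on $\SU(3)$ along one of the three subgroups $\U(2)_{ii}$, $i=1,2,3$. Proposition \ref{Prop:equivalent}\eqref{Prop:equivalent4} identifies $\EO$ with $\mathcal{O}_{p, q_\sigma}^{a, b_\sigma}$ for any $\sigma \in S_3$ via a coordinate-permuting diffeomorphism of $\SU(3)$; under this identification, a Cheeger deformation along $\U(2)_{33}$ of the permuted presentation corresponds to a Cheeger deformation along some $\U(2)_{ii}$ of the original. Consequently, $\EO$ admits some positively curved Eschenburg metric if and only if there exists $\sigma \in S_3$ for which $\mathcal{O}_{p, q_\sigma}^{a, b_\sigma}$ satisfies the hypothesis of Proposition \ref{prop:poscurv}. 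This step is essentially indicated by the paragraph preceding the statement in the excerpt.

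Second, I would apply Proposition \ref{prop:poscurv} to $\mathcal{O}_{p, q_\sigma}^{a, b_\sigma}$: substituting $q_j \mapsto q_{\sigma(j)}$ and $b_j \mapsto b_{\sigma(j)}$ in the conclusion of that proposition yields, for every $t \in [0,1]$ and every nonnegative triple $(\eta_1, \eta_2, \eta_3)$ with $\sum_i \eta_i = 1$, the two scalar disjunctions
\[
(1-t) b_{\sigma(1)} + t\, b_{\sigma(2)} \neq \sum_i \eta_i a_i \ \text{ or }\ (1-t) q_{\sigma(1)} + t\, q_{\sigma(2)} \neq \sum_i \eta_i p_i,
\]
and
\[
b_{\sigma(3)} \neq \sum_i \eta_i a_i \ \text{ or }\ q_{\sigma(3)} \neq \sum_i \eta_i p_i.
\]

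Finally, two points in $\mathbb{R}^2$ coincide if and only if both of their coordinates agree, so each of the scalar ``or'' conditions above is equivalent to the corresponding vector inequality, namely $(1-t) Q_{\sigma(1)} + t\, Q_{\sigma(2)} \neq \sum_i \eta_i P_i$ and $Q_{\sigma(3)} \neq \sum_i \eta_i P_i$, using the definitions $P_i = (p_i, a_i)^{T}$ and $Q_i = (q_i, b_i)^{T}$. This establishes the proposition. The only nontrivial ingredient is the first step: one must verify that the parameter equivalence of Proposition \ref{Prop:equivalent}\eqref{Prop:equivalent4} genuinely exhausts the three available choices of Cheeger deformation subgroup. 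This is a routine check by inspecting the explicit coordinate-permuting diffeomorphism realising that equivalence, which intertwines the Cheeger deformations along the respective $\U(2)_{ii}$.
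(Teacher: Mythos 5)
Your proposal is correct and follows essentially the same route as the paper: the authors likewise obtain the proposition as an immediate repackaging of Proposition~\ref{prop:poscurv}, noting that permuting $(q,b)$ via Proposition~\ref{Prop:equivalent}\eqref{Prop:equivalent4} accounts for the three choices of Cheeger deformation subgroup, and then identifying the pair of scalar disjunctions with the two vector non-equalities for $P_i=(p_i,a_i)^T$ and $Q_i=(q_i,b_i)^T$. The one point you flag for verification (that the permutation equivalence exhausts the choices of $\U(2)_{ii}$) is exactly the content the paper leaves implicit in the sentence preceding the statement.
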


We will now explain the geometric interpretation of Proposition~\ref{prop:pos_curv_condition}: We denote the triangles in the Euclidean plane spanned by the points $P_i$,   respectively $Q_i$, by $\Delta_P$, respectively $\Delta_Q$, and a line segment between two points $X,Y \in \R^2$ by $L(X,Y)$. 
Then $\EO$ admits an Eschenburg metric with positive sectional curvature if and only if there exists $\sigma \in S_3$  such that $L(Q_{\sigma(1)},Q_{\sigma(2)}) \cup \lbrace Q_{\sigma(3)} \rbrace$ does not intersect the triangle $\Delta_P$. We illustrate this condition  in Figure~\ref{fig:triangles}: In the first case the corresponding orbifold cannot have an Eschenburg metric with positive sectional curvature since any edge of one of the triangles $\Delta_Q$ and $\Delta_P$ intersects the other triangle. In the second case, the corresponding Eschenburg orbifold admits a metric with positive sectional curvature, since the line segment connecting $Q_1$ and $Q_2$ does not intersect the triangle $\Delta_P$.
Note that   since $\sum P_i = \sum Q_i$, the triangles $\Delta_P$ and $\Delta_Q$ have the same centroid $c$. Therefore,  it can easily be shown (see Lemma \ref{P:Same_Centroid}) that the midpoint of $Q_1$ and $Q_2$ is contained in $\Delta_P$, if $Q_3 \in \Delta_P$. Hence, Condition~\ref{lem:pos_curv_condition_2} is not necessary.  
   \definecolor{ffqqqq}{rgb}{1,0,0}
\definecolor{wwzzqq}{rgb}{0,0.6,0}
\begin{figure}[h!]\caption{Sectional curvatures and triangles}\label{fig:triangles}
\begin{tikzpicture}[line cap=round,line join=round,>=triangle 45,x=1cm,y=1cm]
\clip(-9,-4.5) rectangle (10,3.5);
\fill[line width=2pt,color=wwzzqq,fill=wwzzqq,fill opacity=0.27] (-6,1) -- (-5,-3) -- (-2,1) -- cycle;
\fill[line width=2pt,fill=black,fill opacity=0.12] (-4,2) -- (-3,-2) -- (-6,-1) -- cycle;
\fill[line width=2pt,color=wwzzqq,fill=wwzzqq,fill opacity=0.27] (2,3) -- (3,-3) -- (5,0) -- cycle;
\fill[line width=2pt,fill=black,fill opacity=0.12] (1,1) -- (2,-1) -- (7,0) -- cycle;
\draw [line width=2pt,color=wwzzqq] (-6,1)-- (-5,-3);
\draw [line width=2pt,color=wwzzqq] (-5,-3)-- (-2,1);
\draw [line width=2pt,color=wwzzqq] (-2,1)-- (-6,1);
\draw [line width=2pt] (-4,2)-- (-3,-2);
\draw [line width=2pt] (-3,-2)-- (-6,-1);
\draw [line width=2pt] (-6,-1)-- (-4,2);
\draw [line width=2pt,color=wwzzqq] (2,3)-- (3,-3);
\draw [line width=2pt,color=wwzzqq] (3,-3)-- (5,0);
\draw [line width=2pt,color=wwzzqq] (5,0)-- (2,3);
\draw [line width=2pt] (1,1)-- (7,0);
\draw [line width=2pt,color=ffqqqq] (2,-1)-- (1,1);
\draw [line width=2pt] (7,0)-- (2,-1);
\draw (-7.5,-3.75) node[anchor=north west] {(1) No positive sectional curvature};
\draw (0.5,-3.75) node[anchor=north west] {(2) Positive sectional curvature};
\draw[fill=black] (-4.3333333,-0.3333333) circle (1.5pt);
\draw[color = black] (-4.2,-0.5) node {$c$};
\draw [fill=wwzzqq] (-6,1) circle (2.5pt);
\draw[color=wwzzqq] (-5.809151686768261,1.3) node {$P_1$};
\draw [fill=wwzzqq] (-5,-3) circle (2.5pt);
\draw[color=wwzzqq] (-5,-3.35) node {$P_3$};
\draw [fill=wwzzqq] (-2,1) circle (2.5pt);
\draw[color=wwzzqq] (-1.8029460044764865,1.3) node {$P_2$};
\draw [fill=black] (-4,2) circle (2.5pt);
\draw[color=black] (-3.7973016716435706,2.3) node {$Q_1$};
\draw [fill=black] (-3,-2) circle (2.5pt);
\draw[color=black] (-2.6,-2) node {$Q_2$};
\draw [fill=black] (-6,-1) circle (2.5pt);
\draw[color=black] (-6.4,-1) node {$Q_3$};
\draw[fill=black] (3.3333333,0) circle (1.5pt); 
\draw[color = black] (3.46666,-0.16666) node {$c$};
\draw [fill=wwzzqq] (2,3) circle (2.5pt);
\draw[color=wwzzqq] (2,3.3) node {$P_1$};
\draw [fill=wwzzqq] (3,-3) circle (2.5pt);
\draw[color=wwzzqq] (3,-3.35) node {$P_3$};
\draw [fill=wwzzqq] (5,0) circle (2.5pt);
\draw[color=wwzzqq] (5.3,0) node {$P_2$};
\draw [fill=black] (7,0) circle (2.5pt);
\draw[color=black] (7.35,0) node {$Q_3$};
\draw [fill=black] (1,1) circle (2.5pt);
\draw[color=black] (1,1.3) node {$Q_1$};
\draw [fill=black] (2,-1) circle (2.5pt);
\draw[color=black] (2,-1.35) node {$Q_2$};
\draw[color=ffqqqq] (0.75,-0.5) node {$L(Q_1,Q_2)$};
\end{tikzpicture}
\end{figure}
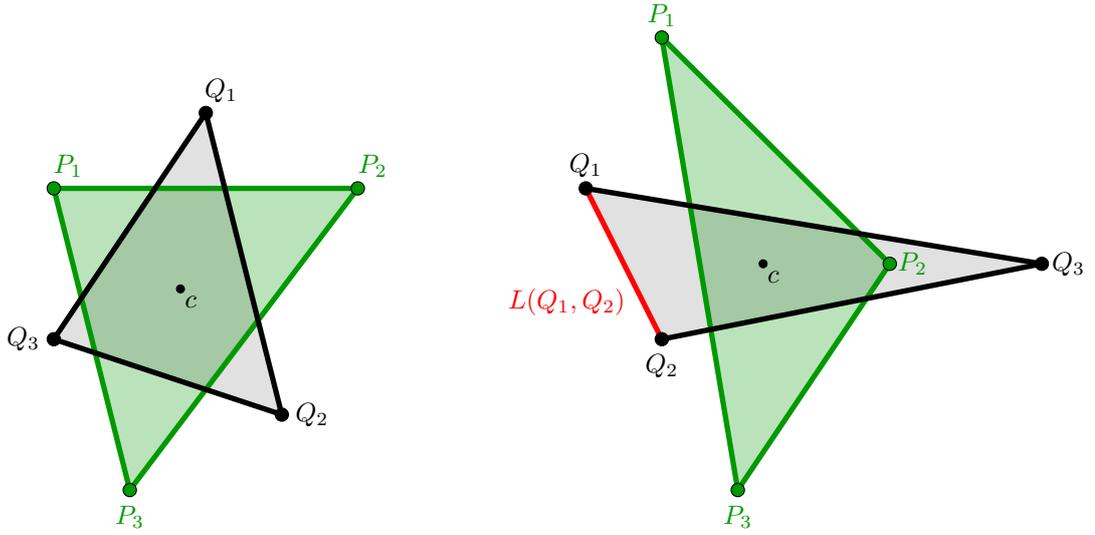

\begin{remark}\label{rem:poscurv}\mbox{}
    \begin{enumerate}
        \item The positive curvature condition for a $7$-dimensional Eschenburg orbifold $E^7_{p,q}= \SU(3)\sslash \Sph^1_{p,q}$ with a Cheeger deformation along $\U(2)_{33}$ is as follows: For all $t \in \lbrack 0,1\rbrack $
    \begin{align*}
        (1-t)q_1 + tq_2, q_3 \notin \lbrack \min p_i, \max p_i\rbrack
    \end{align*}
    Since $\lbrack \min p_i, \max p_i\rbrack$ is given by the set of convex combinations of $p_1,p_2,p_3$, this is analogous to Proposition~\ref{prop:pos_curv_condition}.
    \item Operations \eqref{Prop:equivalent2}, \eqref{Prop:equivalent3} and \eqref{Prop:equivalent5} of Proposition \ref{Prop:equivalent} do not affect the positive curvature condition  since \eqref{Prop:equivalent2} is a scaling and \eqref{Prop:equivalent3} a translation of the $P_i$ and $Q_i$. Operation~\eqref{Prop:equivalent5} corresponds to a linear change of the coordinate system and hence cannot affect the condition. Operation~\eqref{Prop:equivalent1} affects the positive curvature condition in the sense  that the roles of $P_i$ and $Q_i$ are interchanged.
    \item  \label{rem:poscurv_3} Note that $\lbrace P_1,P_2, P_3\rbrace \cap \lbrace Q_1, Q_2, Q_2\rbrace = \emptyset$, since otherwise one of $N_\sigma =0$. To see this, assume without loss of generality, that $P_1 = Q_1$. Therefore we have $p_1 = q_1 $, $a_1 = b_1$ and hence
    \begin{align*}
        N_{\Id} = \vert (p_1-q_1)(a_2 - b_2)- (a_1 - b_1)(p_2- q_2)\vert = 0.
    \end{align*}
This violates the assumption  that the action is almost free.
    \end{enumerate}
\end{remark}
\subsection{Separating Axis Theorem and positive curvature}\label{SS:Separating_Hyperplane}
In this subsection, we recall some elementary facts from convex geometry and prove some lemmata which will play an important role in developing a strategy to prove Theorem~\ref{thm:main:sing} (see \cite{Sch14} for more details). 

For $u\in \mathbb{R}^n\setminus \{0\}$ and $\alpha\in \R$ define the hyperplane $H_{u,\alpha}$  as $$H_{u, \alpha}:=\{x\in \R^n\mid \langle x, u\rangle=\alpha\}.$$
The hyperplane $H_{u,\alpha}$ bounds the two half-spaces 
$$H^-_{u, \alpha}=\{x\in \R^n\mid \langle x, u\rangle \leq \alpha\}, \qquad H^+_{u, \alpha}=\{x\in \R^n\mid \langle x, u\rangle \geq \alpha\}.$$

 Let $K_1, K_2\subseteq \R^n$ be two subsets and $H_{u, \alpha}$ be a hyperplane. The
hyperplane $H_{u, \alpha}$ separates $K_1$ and $K_2$ if, $K_1\subseteq H^-_{u, \alpha}$ and $K_2\subseteq H^+_{u, \alpha}$, or vice versa. The subsets $K_1, K_2$ are said to be strongly separated by $H_{u, \alpha}$ if there is a number $\varepsilon>0$ such that $H_{u, \alpha-\varepsilon}$ and $H_{u, \alpha+\varepsilon}$ 
both separate $K_1$ and $K_2$. 
\begin{theorem}\label{Thm:Separation}\cite[Theorem~1.3.7]{Sch14}
    Let  $K_1, K_2\subseteq \R^n$ be non-empty convex subsets with $K_1\cap K_2=\emptyset$. Then $K_1$ and $K_2$ can be separated. If $K_1$ is compact and $K_2$ is closed, then they can be strongly separated. 
\end{theorem}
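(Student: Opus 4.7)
The plan is to reduce both assertions to the single task of separating a convex set from the origin by passing to the Minkowski difference $C \coloneqq K_1 - K_2 = \{x - y : x \in K_1,\ y \in K_2\}$. Then $C$ is convex (as $-K_2$ is convex and convexity is preserved under Minkowski sums), and $K_1 \cap K_2 = \emptyset$ is equivalent to $0 \notin C$. Conversely, any unit vector $u$ with $\langle z, u\rangle \geq 0$ for every $z \in C$ produces a separating hyperplane $H_{u,\alpha}$ for $K_1, K_2$ with $\alpha \coloneqq \sup_{y \in K_2} \langle y, u\rangle$, and strong separation of $0$ from $C$ corresponds precisely to strong separation of $K_1$ and $K_2$.

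For the strong separation statement, I would first verify that under the hypothesis ``$K_1$ compact, $K_2$ closed'' the difference $C$ is itself closed: given $x_n - y_n \to z$ with $x_n \in K_1$, extract a subsequence $x_{n_k} \to x \in K_1$ by compactness, whence $y_{n_k} \to x - z$ which must lie in $K_2$ by closedness, so $z = x - (x-z) \in C$. With $C$ nonempty, closed, convex and missing the origin, the standard nearest-point argument applies: there is a unique $p \in C$ minimizing $\|\cdot\|$, and setting $u \coloneqq p/\|p\|$, the first-order convexity inequality obtained by differentiating $t \mapsto \|(1-t)p + tz\|^2$ at $t = 0$ yields $\langle z, u\rangle \geq \|p\|$ for all $z \in C$. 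Translating back gives strong separation of $K_1, K_2$ with margin $\|p\|/2$.

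For the weaker separation claim (without compactness), the difficulty is that $C$ need not be closed, so $0 \in \overline{C}$ is possible. If $0 \notin \overline{C}$, applying the nearest-point argument above to $\overline{C}$ already yields strong separation and hence a fortiori separation. In the remaining case $0 \in \overline{C}$, I would first reduce to the situation where $\overline{C}$ has nonempty interior (if $C$ lies in a proper affine subspace of $\R^n$, any hyperplane containing that subspace trivially separates), and then exhibit a supporting hyperplane at the boundary point $0 \in \partial \overline{C}$ by the following limiting argument: choose $z_n \notin \overline{C}$ with $z_n \to 0$, apply the nearest-point argument to obtain unit vectors $u_n$ with $\langle z, u_n\rangle \geq \langle z_n, u_n\rangle$ for all $z \in \overline{C}$, extract a convergent subsequence $u_{n_k} \to u$ from $S^{n-1}$ by compactness, and pass to the limit to obtain $\langle z, u\rangle \geq 0$ for all $z \in \overline{C}$, which is the desired separating functional.

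The hardest step is the boundary case $0 \in \partial \overline{C}$: establishing the supporting hyperplane requires both the preliminary reduction to the nonempty-interior situation and the compactness of $S^{n-1}$ to extract a limit direction. The remaining steps — closedness of $K_1 - K_2$ under the compactness hypothesis, the nearest-point variational inequality, and the bookkeeping translating a functional on $C$ to a separating hyperplane for $K_1, K_2$ — are routine once the topological backbone is in place.
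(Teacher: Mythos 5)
The paper does not prove this statement at all: it is quoted verbatim from Schneider \cite[Theorem~1.3.7]{Sch14} and used as a black box, so there is no in-paper argument to compare yours against. On its own merits, your proof is the standard one and is correct: the reduction to separating $0$ from the Minkowski difference $C=K_1-K_2$, the closedness of $C$ under the compact-plus-closed hypothesis, the nearest-point variational inequality giving strong separation, and the limiting supporting-hyperplane argument at $0\in\partial\overline C$ are all sound. Two small points you should make explicit if you write this up in full: (i) in the boundary case you assert $0\in\partial\overline C$, which needs the convexity fact $\mathrm{int}(\overline C)=\mathrm{int}(C)$ (otherwise $0\in\mathrm{int}(\overline C)$ would not be excluded by $0\notin C$ alone); and (ii) your lower-dimensional reduction produces a hyperplane containing both $0$ and $C$, which is a legitimate separation only because the paper's definition of separation does not exclude the degenerate case where both sets lie in the separating hyperplane --- this is consistent with the definition as stated, but worth flagging.
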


The following theorem must be known in convex geometry, but we were unable to find a proof in the literature and hence provide one for completeness. 
\begin{theorem}\label{THM_Separation_Axis}
Let $K_1, K_2\subseteq \R^2$ be two polygons with $K_1\cap K_2= \emptyset$. Then at least one edge of one of the polygons can be extended to a separating line $H_{v, \alpha}$ such that  $v$ is an outer normal vector to the corresponding polygon. 
\end{theorem}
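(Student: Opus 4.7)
The plan is to pass to the Minkowski difference $D := K_1 + (-K_2)$, which is again a convex polygon in $\R^2$, and reformulate the whole question in terms of the single polygon $D$ together with the origin. Since $K_1\cap K_2=\emptyset$, the origin does not lie in $D$. A unit vector $v$ is the outer normal to a line that strongly separates $K_1$ from $K_2$ (with $K_1$ on the negative side) if and only if $\max_{z\in D}\langle z,v\rangle<0$. Hence the task becomes: find such a $v$ that is simultaneously the outer normal of an edge of $D$ and lifts to the outer normal of an edge of $K_1$ or $K_2$.

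To manufacture $v$, I would let $p$ be the (unique) point of $D$ closest to $0$. The standard closest-point characterisation gives $\langle z,-p/\|p\|\rangle\le -\|p\|$ for all $z\in D$, with equality at $z=p$, so $\max_{z\in D}\langle z,-p/\|p\|\rangle=-\|p\|<0$ and $-p/\|p\|$ is already a separating direction. If $p$ lies in the relative interior of an edge of $D$, I take $v:=-p/\|p\|$, which is the outer normal of that edge. If $p$ is a vertex of $D$, let $n_a,n_b$ denote the outer normals of the two edges of $D$ meeting at $p$; writing $-p/\|p\|=\lambda n_a+\mu n_b$ with $\lambda,\mu\ge 0$, the identity $-\|p\|=\lambda\langle p,n_a\rangle+\mu\langle p,n_b\rangle$ forces at least one of $\langle p,n_a\rangle,\langle p,n_b\rangle$ to be negative. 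I then take $v$ to be that particular $n_a$ (or $n_b$), so that $\max_{z\in D}\langle z,v\rangle=\langle p,v\rangle<0$ and $v$ is the outer normal of an actual edge of $D$.

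Finally, I would transfer $v$ back to the original polygons using the fact that the normal fan of a Minkowski sum is the common refinement of the normal fans of the summands; equivalently, every outer edge normal of $D=K_1+(-K_2)$ is an outer edge normal of $K_1$ or of $-K_2$. So either $v$ is the outer normal of some edge $e_1$ of $K_1$, in which case the supporting line of $K_1$ in direction $v$, namely $H_{v,\alpha}$ with $\alpha=\max_{K_1}\langle\cdot,v\rangle$, contains $e_1$ and satisfies $\alpha<\min_{K_2}\langle\cdot,v\rangle$ by the inequality above; or $-v$ is the outer normal of some edge $e_2$ of $K_2$, and the symmetric argument produces a separating line extending $e_2$ whose outer normal (viewed from $K_2$) is $-v$.

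The main obstacle I foresee is the vertex case of step two, where one must pick the correct one of the two incident edge normals $n_a,n_b$ to ensure the separation inequality is strict; once that is settled, the Minkowski-sum normal-fan identification immediately returns the chosen direction to an edge of $K_1$ or $K_2$. Throughout I tacitly assume "polygon" means convex polygon, since the separating axis statement fails in the non-convex setting.
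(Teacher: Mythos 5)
Your proof is correct and follows essentially the same route as the paper: both reduce the problem to separating the origin from the Minkowski difference $K_1+(-K_2)$ and then transfer an edge normal of the difference polygon back to an edge of $K_1$ or $K_2$ (with the right outward orientation). The only divergence is how the separating edge of the difference polygon is located --- the paper reads it off from the half-space representation (since $0\notin\bigcap_i H_i^-$ forces $0\notin H_i^-$ for some $i$), whereas you use the nearest-point projection together with a normal-cone case analysis at a vertex; both are sound, and yours merely trades the paper's shorter argument for a more explicit verification of strict separation.
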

\begin{proof}
    Let $K=K_1-K_2=\{x-y\mid x\in K_1, y\in K_2\}$ be the Minkowski sum of $K_1$ and   $-K_2$. Note  that $K$ is a compact convex polygon as well.  Furthermore, since $K_1\cap K_2=\emptyset$, then $0\notin K$. Hence, $0$ and $K$ can be (strongly) separated. We show that we can assume  the separating line $L$ is parallel to  one of the edges of $K$. 

    By the proof of \cite[Theorem 2.4.3]{Sch14}, every polygon is the intersection of finitely many closed half-spaces $H^-_k$ bounded by a support Line $H_k$. Moreover, every edge $E_i$ of the polygon  satisfies $E_i=K\cap H_i$, i.e.  every edge can be extended to a support line. Now since $0\notin K$, there exists $i$, such that $0\notin H_i^-$. Hence $H_i=H(v, \alpha)$ is a separating line. Then it follows from the proof of \cite[Lemma 1.3.6]{Sch14} that a line  $L$ parallel to $H_i$ (strongly) separates $K_1$ and  $K_2$.  
   Moreover,  since the edges of the Minkowski sum $K$ are parallel to the edges of the original polygons, we deduce that $L$ is parallel to one of the edges of one of the polygons $K_1$ and $K_2$. Note that we can always choose $v$ in such a way that $v$ is an outer normal to the corresponding polygon, that is, the polygon for which $L$ is parallel to an edge.  This finishes the proof.
\end{proof}

\begin{remark}[Separating Axis]\label{REM:Separating_Axis}
  Let $K_1$ and $K_2$ be two disjoint polygons. Let $L$ be a separating line, which we assume by Theorem~\ref{THM_Separation_Axis}  to be an edge of one of the polygons. Let $L^\perp$ be a line orthogonal to $L$. Then the projections of $K_1$ and $K_2$ to $L^\perp$ are disjoint. We call  $L^\perp$  a  \emph{separating axis}.  Note that $L^\perp$ is not unique, but we may require it to pass through a particular point.
\end{remark}

\begin{figure}[h!]
 \caption{Separating axis}
    \label{fig:separatingaxis}
    \centering
    \definecolor{ffqqqq}{rgb}{1,0,0}
\definecolor{qqzzqq}{rgb}{0,0.6,0}
\definecolor{qqqqff}{rgb}{0,0,1}
\begin{tikzpicture}[line cap=round,line join=round,>=triangle 45,x=1cm,y=1cm]
\clip(-7,-5) rectangle (4,4);
\draw (-5,-4.5) node[anchor=north west] {separating axis};
\draw (1,-4) node[anchor=north west, color = ffqqqq] {separating line};
\fill[line width=2pt,color=qqqqff,fill=qqqqff,fill opacity=0.1] (-3.8821863119301816,0.798719275486731) -- (-5.478100189747409,-0.3879859157106935) -- (-5.825927573374241,-2.5158710861336617) -- (-4.168632392564043,-3.027381944408414) -- (-2.613639383408796,-2.311266742823761) -- (-2,0) -- cycle;
\fill[line width=2pt,color=qqzzqq,fill=qqzzqq,fill opacity=0.1] (-0.48575421298582583,3.15166922355059) -- (-1.6315385355212713,1.1056257904515823) -- (-0.8949628996056278,-0.4902880873656439) -- (0,-0.7358132993375248) -- (2.5014691993387284,1.4329927397474236) -- (1.0283179275074412,3.11074835488861) -- cycle;
\draw [line width=2pt,color=qqqqff] (-3.8821863119301816,0.798719275486731)-- (-5.478100189747409,-0.3879859157106935);
\draw [line width=2pt,color=qqqqff] (-5.478100189747409,-0.3879859157106935)-- (-5.825927573374241,-2.5158710861336617);
\draw [line width=2pt,color=qqqqff] (-5.825927573374241,-2.5158710861336617)-- (-4.168632392564043,-3.027381944408414);
\draw [line width=2pt,color=qqqqff] (-4.168632392564043,-3.027381944408414)-- (-2.613639383408796,-2.311266742823761);
\draw [line width=2pt,color=qqqqff] (-2.613639383408796,-2.311266742823761)-- (-2,0);
\draw [line width=2pt,color=qqqqff] (-2,0)-- (-3.8821863119301816,0.798719275486731);
\draw [line width=2pt,color=qqzzqq] (-0.48575421298582583,3.15166922355059)-- (-1.6315385355212713,1.1056257904515823);
\draw [line width=2pt,color=qqzzqq] (-1.6315385355212713,1.1056257904515823)-- (-0.8949628996056278,-0.4902880873656439);
\draw [line width=2pt,color=qqzzqq] (-0.8949628996056278,-0.4902880873656439)-- (0,-0.7358132993375248);
\draw [line width=2pt,color=qqzzqq] (0,-0.7358132993375248)-- (2.5014691993387284,1.4329927397474236);
\draw [line width=2pt,color=qqzzqq] (2.5014691993387284,1.4329927397474236)-- (1.0283179275074412,3.11074835488861);
\draw [line width=2pt,color=qqzzqq] (1.0283179275074412,3.11074835488861)-- (-0.48575421298582583,3.15166922355059);
\draw [line width=2pt,color=ffqqqq,domain=-10.777352681473843:12.056492031911102] plot(\x,{(-1.78941797134538-1.5959138778172262*\x)/0.7365756359156435});
\draw [line width=2pt,domain=-10.777352681473843:12.056492031911102] plot(\x,{(-3.574847086310587--0.7365756359156435*\x)/1.5959138778172262});
\draw [line width=2pt,dash pattern=on  3pt off 6pt,color=qqqqff] (-5.825927573374241,-2.5158710861336617)-- (-4.907803437164254,-4.505140047921966);
\draw [line width=2pt,dash pattern=on  3pt off 6pt,color=qqqqff] (-2,0)-- (-0.7964878048780477,-2.607609756097561);
\draw [line width=2pt,dash pattern=on  3pt off 6pt,color=qqzzqq] (2.5014691993387284,1.4329927397474236)-- (3.459715748236801,-0.6432081161983985);
\draw [line width=2pt,color=qqqqff] (-4.907803437164254,-4.505140047921966)-- (-0.7964878048780477,-2.607609756097561);
\draw [line width=2pt,color=qqzzqq] (-0.07205463828229842,-2.273255986899523)-- (3.459715748236801,-0.6432081161983985);
\begin{scriptsize}
\draw [fill=qqqqff] (-3.8821863119301816,0.798719275486731) circle (2.5pt);
\draw [fill=qqqqff] (-5.478100189747409,-0.3879859157106935) circle (2.5pt);
\draw [fill=qqqqff] (-5.825927573374241,-2.5158710861336617) circle (2.5pt);
\draw [fill=qqqqff] (-4.168632392564043,-3.027381944408414) circle (2.5pt);
\draw [fill=qqqqff] (-2.613639383408796,-2.311266742823761) circle (2.5pt);
\draw [fill=qqqqff] (-2,0) circle (2.5pt);
\draw [fill=qqzzqq] (-0.48575421298582583,3.15166922355059) circle (2.5pt);
\draw [fill=qqzzqq] (-1.6315385355212713,1.1056257904515823) circle (2.5pt);
\draw [fill=qqzzqq] (-0.8949628996056278,-0.4902880873656439) circle (2.5pt);
\draw [fill=qqzzqq] (0,-0.7358132993375248) circle (2.5pt);
\draw [fill=qqzzqq] (2.5014691993387284,1.4329927397474236) circle (2.5pt);
\draw [fill=qqzzqq] (1.0283179275074412,3.11074835488861) circle (2.5pt);
\draw[color=black] (-10.531827469501962,-6.700029906821133) node {$n$};
\draw [fill=qqqqff] (-4.907803437164254,-4.505140047921966) circle (2pt);
\draw [fill=qqqqff] (-0.7964878048780477,-2.607609756097561) circle (2pt);
\draw [fill=qqzzqq] (-0.07205463828229842,-2.273255986899523) circle (2pt);
\draw [fill=qqzzqq] (3.459715748236801,-0.6432081161983985) circle (2pt);

\end{scriptsize}
\end{tikzpicture}
\end{figure}

Let $H$ be a closed non-empty convex subset of $\R^n$. Recall that for each $x\in \R^n$, there exists a unique point $\pr_H(x)\in H$ satisfying 
$$\lvert x-\pr_H(x)\rvert\leq \lvert x-y\rvert,$$
for all $y\in H$ (see for example \cite[p. 9]{Sch14}). Then we can define a map $\pr_H:\mathbb{R}^n\to H$, called the \emph{metric projection} of $H$.  Note that $\pr_H$ is the orthogonal projection, if $H$ is a linear subspace of $\R^n$.

We will now justify that only Condition \ref{lem:pos_curv_condition_2} of Proposition~\ref{prop:pos_curv_condition} is needed as already stated in the discussion right before Remark~\ref{rem:poscurv}.
\begin{lemma} \label{P:Same_Centroid}
    Let $(p_1,\ldots, p_k)$ and $(q_1, \ldots, q_k)$ be two collections of points in $\R^n$ with the same centroid. If  $q_k \in \Conv \lbrace p_1,\ldots, p_k\rbrace $, then $\Conv \lbrace p_1, \ldots , p_k \rbrace$ contains the centroid of $(q_1,\ldots, q_{k-1})$.
\end{lemma}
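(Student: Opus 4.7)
The plan is to reduce this to a direct algebraic manipulation of convex combinations, using the shared-centroid hypothesis to express the centroid of $(q_1,\dots,q_{k-1})$ as an explicit convex combination of the $p_i$.

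First I would set $c = \tfrac{1}{k}\sum_{i=1}^k p_i = \tfrac{1}{k}\sum_{i=1}^k q_i$ to be the common centroid and denote by $c' = \tfrac{1}{k-1}\sum_{i=1}^{k-1} q_i$ the centroid of the first $k-1$ of the $q_i$. A one-line calculation gives the key identity
\begin{equation*}
c' \;=\; \frac{k\,c - q_k}{k-1},
\end{equation*}
so that equivalently $c = \tfrac{k-1}{k}c' + \tfrac{1}{k}q_k$. Thus $c'$ lies on the line through $q_k$ and $c$, slightly beyond $c$ from the side of $q_k$; this observation alone is not enough, because $c$ need not be interior to $\mathrm{Conv}\{p_1,\dots,p_k\}$.

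Next I would use the hypothesis $q_k \in \mathrm{Conv}\{p_1,\dots,p_k\}$ to write $q_k = \sum_{i=1}^k \lambda_i p_i$ with $\lambda_i \ge 0$ and $\sum_i \lambda_i = 1$. Substituting this expression for $q_k$ together with $c = \tfrac{1}{k}\sum_i p_i$ into the identity for $c'$ yields
\begin{equation*}
c' \;=\; \frac{1}{k-1}\sum_{i=1}^k p_i \;-\; \frac{1}{k-1}\sum_{i=1}^k \lambda_i p_i \;=\; \sum_{i=1}^k \frac{1-\lambda_i}{k-1}\, p_i.
\end{equation*}
Then I would verify that this is a bona fide convex combination: each coefficient $\tfrac{1-\lambda_i}{k-1}$ is nonnegative because $\lambda_i \le \sum_j \lambda_j = 1$, and they sum to $\tfrac{k - \sum_i \lambda_i}{k-1} = 1$. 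Hence $c' \in \mathrm{Conv}\{p_1,\dots,p_k\}$, which is the conclusion.

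There is no real obstacle here; the only thing to get right is the algebraic manipulation and the observation that $\lambda_i \le 1$ follows from $\sum \lambda_j = 1$ and $\lambda_j \ge 0$. The argument is dimension-free and works for any $n$ and any $k \ge 2$.
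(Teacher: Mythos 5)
Your proof is correct, and it takes a genuinely different and more direct route than the paper. The paper first establishes the one-dimensional case by explicit inequalities on $\sum q_i$ and $\sum p_i$, and then handles general $n$ by contradiction: if the centroid $c'$ of $(q_1,\dots,q_{k-1})$ were outside $\Conv\{p_1,\dots,p_k\}$, a separating hyperplane (Theorem~\ref{Thm:Separation}) followed by orthogonal projection onto a perpendicular line would produce a one-dimensional counterexample. Your argument bypasses both steps: writing $q_k=\sum_i\lambda_i p_i$ and substituting into $c'=\frac{kc-q_k}{k-1}$ exhibits $c'=\sum_i\frac{1-\lambda_i}{k-1}p_i$ as an explicit convex combination, since $0\le\lambda_i\le 1$ and the coefficients sum to $1$. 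This is shorter, constructive (it produces the coefficients), and manifestly dimension-free, whereas the paper's reduction-by-separation is more roundabout but is stylistically consistent with the separating-axis machinery used throughout the rest of that section. Both proofs are valid; yours is arguably the cleaner one for this particular lemma.
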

\begin{proof}
    We first proof the statement for $n = 1$. In this case $\Conv\lbrace p_1, \ldots , p_k\rbrace = [\min p_i, \max p_i]$. Without loss of generality we assume $\min p_i = p_1$ and $\max p_i = p_k$. Since both collections have the same centroid we have
    \begin{align*}
        &\sum_{i = 1}^{k-1} q_i + p_1 \le \sum_{i=1}^k q_i = \sum_{i=1}^k  p_i \le p_1 + (k-1)p_k\\
        \Rightarrow \quad & {1 \over k-1}\sum_{i = 1}^{k-1} q_i \le p_k
    \end{align*}
    and analogously    ${1 \over k-1}\sum_{i = 1}^{k-1} q_i \ge p_1$.   Therefore ${1 \over k-1}\sum_{i = 1}^{k-1} q_i  \in\Conv \lbrace p_1, \ldots, p_k\rbrace$. \\
    For arbitrary $n$, let ${1 \over k-1}\sum_{i = 1}^{k-1} q_i  \notin\Conv \lbrace p_1, \ldots, p_k\rbrace$. By Theorem \ref{Thm:Separation} there exists a Hyperplane $H$ strongly separating ${1 \over k-1}\sum_{i = 1}^{k-1} q_i$ and $\Conv \lbrace p_1, \ldots, p_k\rbrace$. Let $L$ be a line perpendicular to $H$ through the origin and denote by $\pr \colon \R^n \to L$ the orthogonal projection. Since $\pr$ is linear, we have $\pr(\Conv \lbrace p_1, \ldots, p_k\rbrace) = \Conv \lbrace \pr(p_1), \ldots, \pr(p_k)\rbrace$. Now the collections $(\pr(p_1),\ldots, \pr(p_k))$ and $(\pr(q_1),\ldots,\pr(q_k))$ fulfill the assumptions of the lemma in a one dimensional space, but $\pr(q_k) \notin  \Conv \lbrace \pr(p_1), \ldots, \pr(p_k)\rbrace$, since they are separated by $H$, a contradiction.
\end{proof}

  From now on we denote the interval spanned by the projection of $\Delta_P$ along a line  by $J_P$ without further reference. Since in our case the  triangles $\Delta_P$ and $\Delta_Q$ have the same centroids it follows immediately from Lemma~\ref{P:Same_Centroid} that Condition~\ref{lem:pos_curv_condition_2} is not necessary:
\begin{corollary}\label{C:Sec>0_Geometric}
    The orbifold $\mathcal{O}^{a, b}_{p, q}$ has positive curvature if and only if one of the following equivalent conditions hold 
    \begin{enumerate}
        \item At least one of the edges  $L(Q_i, Q_j)$ of $\Delta_Q$, $i\neq j$, does not intersect $\Delta_P$. 
        \item There exists a line $L$ perpendicular to one of  the edges of one of the triangles such that the projections of no vertices of $\Delta_Q$ along this line lie in $J_P$.
    \end{enumerate}
\end{corollary}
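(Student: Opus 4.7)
The plan is to combine Proposition~\ref{prop:pos_curv_condition}, Lemma~\ref{P:Same_Centroid}, and the Separating Axis Theorem~\ref{THM_Separation_Axis} to recast the analytic positive-curvature condition as the stated purely geometric criteria on the triangles $\Delta_P$ and $\Delta_Q$.

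First I would establish condition (1). By Proposition~\ref{prop:pos_curv_condition}, $\EO$ admits an Eschenburg metric with positive sectional curvature precisely when some permutation $\sigma\in S_3$ satisfies $L(Q_{\sigma(1)}, Q_{\sigma(2)}) \cap \Delta_P = \emptyset$ together with $Q_{\sigma(3)} \notin \Delta_P$. Because $\sum P_i = \sum Q_i$, both triangles share the same centroid, so Lemma~\ref{P:Same_Centroid} (applied with $k=3$, $p_i = P_i$, $q_i = Q_{\sigma(i)}$) shows that if $Q_{\sigma(3)} \in \Delta_P$, then the midpoint $\tfrac{1}{2}(Q_{\sigma(1)} + Q_{\sigma(2)})$ lies in $\Delta_P$ and hence $L(Q_{\sigma(1)}, Q_{\sigma(2)})$ meets $\Delta_P$. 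Consequently the second inequality is redundant, and (1) follows.

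To prove $(1) \Leftrightarrow (2)$ I would argue by projection. If (1) holds and $E = L(Q_{\sigma(1)}, Q_{\sigma(2)})$ is an edge of $\Delta_Q$ disjoint from $\Delta_P$, then Theorem~\ref{THM_Separation_Axis}, applied to $\Delta_P$ and $E$ (viewing $E$ as a degenerate polygon, or equivalently as an edge of $\Delta_Q$), produces a separating line parallel to an edge of $\Delta_P$ or of $\Delta_Q$. Letting $L$ be the perpendicular separating axis from Remark~\ref{REM:Separating_Axis}, the projections $\pr_L(Q_{\sigma(1)})$ and $\pr_L(Q_{\sigma(2)})$ lie strictly on the same side of $J_P$; since the projected triples $(\pr_L(P_i))$ and $(\pr_L(Q_i))$ still share a common centroid, the one-dimensional case of Lemma~\ref{P:Same_Centroid} forces $\pr_L(Q_{\sigma(3)})$ to lie outside $J_P$ as well, yielding (2). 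Conversely, if (2) holds, then by the pigeonhole principle two of the projections $\pr_L(Q_i)$ must lie on the same side of $J_P$; the segment they span projects into that side and is therefore disjoint from $\Delta_P$, producing an edge of $\Delta_Q$ disjoint from $\Delta_P$.

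The one technical point to handle carefully will be the application of Theorem~\ref{THM_Separation_Axis} to the pair $(\Delta_P, E)$ in which the second set is a line segment rather than a full-dimensional polygon. The Minkowski difference argument still supplies a separating direction parallel to an edge of $\Delta_P$ or to $E$ itself, and since $E$ is an edge of $\Delta_Q$, the axis can indeed be chosen perpendicular to an edge of one of the two triangles, as demanded by condition (2).
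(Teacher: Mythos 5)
Your proposal is correct and follows essentially the same route as the paper: the paper likewise derives condition (1) from Proposition~\ref{prop:pos_curv_condition} together with Lemma~\ref{P:Same_Centroid} (which makes the condition on $Q_{\sigma(3)}$ redundant), and obtains the equivalence with condition (2) from Theorem~\ref{THM_Separation_Axis} and Remark~\ref{REM:Separating_Axis}. You merely spell out details (the pigeonhole step and the application of the separating-axis argument to a segment) that the paper leaves implicit.
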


We can use the Separating Axis Theorem to obtain the following strengthening of Theorem \ref{thm:poscurv:yeroshkin}, which will be the main tool in the Proof of Theorem~\ref{thm:main:sing}~\eqref{thm:main:sing:same_Parity}.
\begin{theorem} \label{thm:pos:curv:fibration}
    Let $\EO$ be an Eschenburg orbifold with positive sectional curvature, then there exists and equivalent Eschenburg orbifold $\mathcal{O}_{p^\prime,q^\prime}^{a^\prime,b^\prime}$, such that $E^7_{p^\prime,q^\prime}$ is of cohomogeneity $2$ and admits positive sectional curvature. Furthermore the metrics on $\mathcal{O}_{p^\prime,q^\prime}^{a^\prime,b^\prime}$ and $E_{p^\prime, q^\prime}$ are  induced by the same metric on $\SU(3)$ as the metric on $\EO$.
\end{theorem}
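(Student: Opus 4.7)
The plan is to exploit the geometric reformulation of positive curvature from Corollary \ref{C:Sec>0_Geometric}(2) together with an explicit $\Gl(2,\Z)$-reparametrization of the torus action. By Proposition \ref{Prop:equivalent}(\ref{Prop:equivalent4}) I may first permute the parameters so that the Eschenburg metric on $\EO$ arises from a Cheeger deformation along $\U(2)_{33}$, without altering the underlying metric on $\SU(3)$. Corollary \ref{C:Sec>0_Geometric}(2) then produces a line $\ell \subset \R^2$ perpendicular to an edge of $\Delta_P$ or $\Delta_Q$ such that the orthogonal projections of all three vertices $Q_1,Q_2,Q_3$ onto $\ell$ avoid the interval $J_P = \pr_\ell(\Delta_P)$.

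Since $P_i,Q_i \in \Z^2$, every edge vector $P_i-P_j$ or $Q_i-Q_j$ is an integer vector, so $\ell$ is spanned by a primitive integer vector $v=(v_1,v_2)$, i.e.\ with $\gcd(v_1,v_2)=1$. Bezout's identity supplies integers $u_1,u_2$ with $v_1u_2-v_2u_1=\pm 1$, so that
\begin{equation*}
    A = \begin{pmatrix} v_1 & v_2 \\ u_1 & u_2 \end{pmatrix} \in \Gl(2,\Z).
\end{equation*}
Applying operation (\ref{Prop:equivalent5}) of Proposition \ref{Prop:equivalent} with this $A$ yields equivalent parameters $(p',q',a',b')$ whose first components are $p_i' = \langle v, P_i\rangle$ and $q_i' = \langle v, Q_i\rangle$. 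By the choice of $v$ we obtain $q_i' \notin [\min_j p_j',\,\max_j p_j']$ for every $i$, which is precisely the positive-curvature criterion for the Eschenburg metric on $E^7_{p',q'}$ recalled in Subsection~\ref{subsec:poscurv}. Moreover, since $v$ is perpendicular to an edge $P_i-P_j$ (respectively $Q_i-Q_j$), two entries of $p'$ (respectively $q'$) coincide, so $E^7_{p',q'}$ is of cohomogeneity two.

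The metric compatibility is automatic: operation (\ref{Prop:equivalent5}) only reparameterizes the $T^2$-action through an automorphism of $T^2$ and leaves the Cheeger deformation on $\SU(3)$ untouched, so the submersion metrics on $\mathcal{O}_{p',q'}^{a',b'}$ and $E^7_{p',q'}$ are induced from the same metric on $\SU(3)$ as the original metric on $\EO$. The one genuinely non-routine step is the invocation of Corollary \ref{C:Sec>0_Geometric}(2): it is the shared-centroid phenomenon (Lemma \ref{P:Same_Centroid}) together with the Separating Axis Theorem~\ref{THM_Separation_Axis} that allows a \emph{single} integer direction, perpendicular to an edge of one of the triangles, to simultaneously separate all three vertices of $\Delta_Q$ from $\Delta_P$ in projection. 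Without this geometric input one could only guarantee separation of a single edge of $\Delta_Q$ from $\Delta_P$, which is not enough to upgrade to positive curvature on the $7$-dimensional quotient.
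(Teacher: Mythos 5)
Your proposal is correct and follows essentially the same route as the paper: invoke Corollary~\ref{C:Sec>0_Geometric} to obtain a separating line parallel to an edge of one of the triangles, take the primitive integer vector in the perpendicular (separating-axis) direction, and use it as the first row of a $\Gl(2,\Z)$ matrix in operation~\eqref{Prop:equivalent5} so that the new first components $p_i',q_i'$ become the projections onto that axis, yielding both the positive-curvature criterion for $E^7_{p',q'}$ and the coincidence of two entries of $p'$ or $q'$ (hence cohomogeneity two). The only cosmetic difference is that the paper writes the edge direction $v$ and uses $(v_2,-v_1)$ as the first row, whereas you take the normal direction directly; these are the same construction.
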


\begin{proof}
    Let $\EO$ be an Eschenburg $6$-orbifold with positive sectional curvature. By Corollary~\ref{C:Sec>0_Geometric} we can assume that one of $L(X,Y)$ for $X,Y \in \lbrace Q_1,Q_2,Q_3\rbrace$ or $X,Y \in \lbrace P_1,P_2,P_3\rbrace$ with $X \neq Y$ acts as a separating line. Let $g = \gcd(x_1-y_1,x_2-y_2)$ and denote  $v = {1\over g} (X-Y)=(v_1, v_2)$. There exist $m,n\in \bb Z$, such that $mv_1 +nv_1= 1$. We set
    \begin{align*}
        A = \begin{pmatrix}
            v_2 & -v_1\\ m& n
        \end{pmatrix} \text{ and } \begin{pmatrix}
            p^\prime \\ a^\prime
        \end{pmatrix} \coloneqq A \cdot \begin{pmatrix}
            p \\a
        \end{pmatrix}, \quad \begin{pmatrix}
            q^\prime \\ b^\prime
        \end{pmatrix} \coloneqq A \cdot \begin{pmatrix}
            q \\b
        \end{pmatrix}
    \end{align*}
    $\mathcal{O}_{p^\prime, q^\prime}^{a^\prime, b^ \prime}$ has positive sectional curvature and the line $L(X^\prime, Y^\prime)$ with $X^\prime = AX$ and $Y^\prime = AY$ is separating. Since $Av = (0, 1)^T$, we have that $L(X^\prime, Y^\prime) = 0 \times \R$ is separating. Therefore $\R \times 0$ is a separating axis, and hence $E^7_{p^\prime,q^\prime}$ admits positive sectional curvature. Furthermore, either two of the $p^\prime_i$'s or two of the $q^\prime_i$'s coincide. Therefore $E^7_{p^\prime,q^\prime}$ is actually a cohomogeneity two Eschenburg orbifold with positive sectional curvature.
\end{proof}

We conclude this section with the following lemma  which asserts that in the special case of Lemma~\ref{L:b_2=b_3}, we have positive sectional curvature up to switching the parameters:
\begin{corollary}\label{C:Degenerate_Triangle}
  
   Suppose that in one of the triangles $\Delta_P$ or $\Delta_Q$ at least two vertices coincide, i.e. $p_i = p_j$ and $a_i = a_j$ or $q_i = q_j $ and $b_i = b_j$ for some $i\neq j$, or equivalently $\EO$ admits a cohomogeneity two action by some $S^1 \cdot \SU(2)$. Then either $\EO$ or $\mathcal{O}^{b, a}_{q, p}$ has positive sectional curvature.
\end{corollary}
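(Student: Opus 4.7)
The plan is to use Proposition~\ref{Prop:equivalent}~\eqref{Prop:equivalent4} to permute indices so that $P_1 = P_2 =: P$; then $\Delta_P = L(P, P_3)$ is a segment and the degenerate ``edge'' $L(P_1, P_2)$ is the single point $\{P\}$. We then apply Corollary~\ref{C:Sec>0_Geometric}, splitting the argument according to whether $P$ lies in $\Delta_Q$. If $P \notin \Delta_Q$, then the edge $\{P\}$ of $\Delta_P$ is disjoint from $\Delta_Q$, and Corollary~\ref{C:Sec>0_Geometric} applied to $\mathcal{O}^{b, a}_{q, p}$ (where the roles of the two triangles are exchanged) immediately gives positive sectional curvature for $\mathcal{O}^{b, a}_{q, p}$.

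It remains to treat the case $P \in \Delta_Q$, where we must exhibit an edge $L(Q_i, Q_j)$ of $\Delta_Q$ disjoint from $L(P, P_3)$. The decisive step is to show that no vertex $Q_k$ of $\Delta_Q$ lies on the segment $L(P, P_3)$. Assume for contradiction $Q_k = (1-s) P + s P_3$ for some $s \in (0, 1)$, the endpoints being excluded by Remark~\ref{rem:poscurv}~\eqref{rem:poscurv_3}. Writing $P_3 = 3c - 2P$, with $c$ the common centroid, rewrites $Q_k = (1 - 3s) P + 3s\, c$. For $s < 1/3$ this is a genuine convex combination of $P, c \in \Delta_Q$, contradicting the extremality of the vertex $Q_k$. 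For $s > 1/3$, parameterize the line through $P$ and $P_3$ by sending $P \mapsto 0$ and $P_3 \mapsto 1$, so that $c \mapsto 1/3$ and $Q_k \mapsto s$; since $c$ and $Q_k$ both lie on this line, the line is the median from $Q_k$ to the midpoint $M$ of the opposite edge, and a short computation using $M = (3c - Q_k)/2$ shows that this median corresponds to the parameter interval $[(1-s)/2,\, s]$. For $s > 1/3$ one has $(1-s)/2 > 0$, so the parameter $0$ lies outside this interval, forcing $P \notin \Delta_Q$, a contradiction.

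Once no vertex of $\Delta_Q$ lies on $L(P, P_3)$, the standard fact that a line meets the boundary of a convex polygon in at most two points shows that $L(P, P_3)$ intersects at most two of the three edges of $\Delta_Q$; hence some edge is disjoint from $\Delta_P$, and Corollary~\ref{C:Sec>0_Geometric} yields positive sectional curvature for $\EO$. The main obstacle is the vertex-exclusion step: beyond the two generic regimes $s < 1/3$ and $s > 1/3$ sketched above, one must verify that the borderline $s = 1/3$ (which would force $Q_k = c$ and is only possible when $\Delta_Q$ itself is degenerate) and the fully degenerate configurations where $\Delta_Q$ is a segment or a point do not spoil the argument. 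In each such case either the extremality statement applies with ``vertex'' replaced by the endpoints of the degenerate $\Delta_Q$, or the collapse forces $P \in \{Q_i\}$, contradicting Remark~\ref{rem:poscurv}~\eqref{rem:poscurv_3}.
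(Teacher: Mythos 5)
Your generic case --- $\Delta_Q$ a nondegenerate triangle --- is correct: the rewriting $Q_k=(1-3s)P+3s\,c$, the extremality argument for $s<1/3$, and the median interval $[(1-s)/2,\,s]$ for $s>1/3$ all check out, and together with Remark~\ref{rem:poscurv}~\eqref{rem:poscurv_3} they exclude every $s$ except $s=1/3$. In spirit this mirrors the paper's own proof (which degenerates $\Delta_Q$ rather than $\Delta_P$ and likewise runs on the common centroid and the $2/3$ property of medians), so the generic case is not where the issue lies.

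The gap is in your last paragraph, where you dismiss the configurations in which $\Delta_Q$ is also degenerate. Write $Q_1=Q_2=:Q$, so $\Delta_Q=[Q,Q_3]$. The only edge of $\Delta_Q$ that can possibly miss $\Delta_P=L(P,P_3)$ is the degenerate edge $\{Q\}$ (the other two edges contain $P\in\Delta_P$), so the vertex you must keep off $L(P,P_3)$ is precisely the doubled one --- and for that vertex your $s>1/3$ computation breaks down. The line through $Q$ and $c$ contains the whole segment $\Delta_Q$, so its intersection with $\Delta_Q$ is $[Q,Q_3]$, which in your parameterization is the interval with endpoints $s$ and $1-2s$, strictly larger than the median interval $[(1-s)/2,\,s]$. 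For $s\in[1/2,1)$ this interval contains $0$, so $P\in\Delta_Q$ yields no contradiction, and extremality does not apply either since $1-3s<0$. Concretely, $P_1=P_2=(1,1)$, $P_3=(-2,-2)$, $Q_1=Q_2=(-1,-1)$, $Q_3=(2,2)$ realizes $s=2/3$; there every edge of each degenerate ``triangle'' meets the other, so neither $\EO$ nor $\mathcal{O}^{b,a}_{q,p}$ would be positively curved. The corollary survives only because this configuration cannot arise from an Eschenburg orbifold: if the doubled vertex $Q$ lies on $L(P,P_3)$ and $Q\neq c$, then the line through $Q$ and $c$ coincides with the line through $P$ and $P_3$, forcing all six points $P_i,Q_j$ onto one line; then every difference $P_i-Q_j$ is parallel to a fixed direction and every $N_\sigma=0$, contradicting almost-freeness. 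You need to invoke this (or some other use of $N_\sigma\neq 0$); the two escape routes you name --- extremality at the endpoints, and ``the collapse forces $P\in\{Q_i\}$'' --- do not cover this sub-case.
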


\begin{proof}
        Assume that in $\Delta_Q$ at least two vertices coincide. Thus $\Delta_Q$ is a segment in $\R^2$. If either of the endpoints  lie in $\Delta_P$, then $\EO$ has  positive sectional curvature. Now, consider the case where one endpoint, say $Q_i$, intersects $\Delta_P$. Then we have two scenarios: either $Q_i=Q_j$, for some $j\neq i$, or $Q_i\neq Q_j$, for $j\neq i$. If $Q_i\neq Q_j$, for $j\neq i$, then by Lemma~\ref{P:Same_Centroid}, and Part~\eqref{rem:poscurv_3} of Remark~\ref{rem:poscurv}, the other endpoint intersects $\Delta_P$ as well, but not at a vertex. By the properties of the plane, this implies that one edge, including the vertices, of the triangle $\Delta_P$ does not intersect with $\Delta_Q$. That is, the orbifold $\mathcal{O}^{b, a}_{q, p}$ has positive curvature. Now assume that $Q_i=Q_j$, for some $j\neq i$. If the other endpoint intersects  $\Delta_P$, similarly, $\mathcal{O}^{b, a}_{q, p}$ has positive curvature. Let us assume that the other endpoint $Q_k$, $k\neq i, j$ does not lie within $\Delta_P$. We claim that the segment, $\Delta_Q$,  does not pass through a vertex of $\Delta_P$. Assume it does, then since the midpoint of the segment and the centroid of $\Delta_P$ coincide, the segment necessarily lies on one of the median of the $\Delta_P$ and leaves the triangle by the "$2/3$ property" of the medians, a contradiction. Therefore, the segment does not pass through the vertices and hence does not intersect one of the edges, including the vertices, of $\Delta_P$. Whence, $\mathcal{O}^{b, a}_{q, p}$ has positive curvature. 
\end{proof}

\section{Proof of Theorem~\ref{thm:main:sing}}\label{sec:singset}
In this section, we prove Theorem~\ref{thm:main:sing}.  
We divide the proof into two subsections. In Subsection~\ref{SS:Part_1}, we  first prove  Part~\ref{thm:main:sing:same_Parity}  for isometric  almost free $\S^1$ actions on positively curved cohomogeneity $2$ Eschenburg $7$-orbifolds. Then by Theorem~\ref{thm:pos:curv:fibration}, we arrive at the same conclusion for the general case. While we could follow the same approach to prove Part~\ref{thm:main:sing:smoothsphere}, we prefer to present a direct proof, as it is relatively less convoluted and provides additional information  needed later. This will be done in Subsection~\ref{SS:Part_2}. The main tool is the \emph{Separating Axis Theorem}, which allows us to reduce the positive curvature conditions to a single condition (Corollary~\ref{C:Sec>0_Geometric}) and to develop a geometric strategy to verify this condition.

\subsection{Proof of Theorem \ref{thm:main:sing}, Part~\ref{thm:main:sing:same_Parity}}\label{SS:Part_1} The main objective of this section is to prove the following proposition. From this, as detailed below, we can easily deduce Part~\ref{thm:main:sing:same_Parity} of Theorem \ref{thm:main:sing}. 
\begin{proposition}\label{Prop:Cohom_2}
    Let $\ESO$ be a  cohomogeneity $2$ Eschenburg orbifold 
    with the  almost free  biquotient action of $\S^1_{a, b}$. If  three of $\ast_{\sigma}$'s  with the same parity are regular and either $\ESO$  or $\mathrm{E}^7_{q, p}$ admits positive curvature, then  $\T$  acts freely. 
\end{proposition}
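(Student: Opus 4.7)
The plan is to reduce the problem to a single arithmetic compatibility relation between the signed vertex orders $\tilde l_\sigma$ from Lemma~\ref{L:Change_of_Parameteres}. Positive sectional curvature of either $\ESO$ or $\mathrm E^7_{q,p}$ descends via O'Neill's formula to positive curvature on the quotient $\EO$, so Theorem~\ref{thm:pos:curv:fibration} together with Proposition~\ref{Prop:equivalent} allows us, after an equivalence, to assume that $\ESO$ itself is both cohomogeneity $2$ and positively curved. Using Remark~\ref{R:Change_of_Parameteres} we put the parameters into the standard form $p=(c,d,e)$, $q=(c+d+e,0,0)$, $a=(a_1,a_2,a_3)$, $b=(0,b_2,b_3)$ with $c,d,e>0$, and a further permutation of indices from Proposition~\ref{Prop:equivalent} lets us assume that the three regular vertices are the even ones, i.e.\ $\tilde l_{Id},\tilde l_{(123)},\tilde l_{(132)}\in\{\pm 1\}$.

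The heart of the argument is the identity
\[
c\,\tilde l_{Id} + e\,\tilde l_{(123)} + d\,\tilde l_{(132)} \;=\; (cd+ce+de)\,(b_2-b_3),
\]
which is the unique (up to scale) combination of the three even $\tilde l_\sigma$ in which the coefficients of $a_1$ and $a_2$ simultaneously vanish, and which can be read off directly from Lemma~\ref{L:Change_of_Parameteres}. Writing $\epsilon_i\in\{\pm 1\}$ for the three even values and $m=b_2-b_3\in\Z$, this becomes $(cd+ce+de)\,m = c\epsilon_1+e\epsilon_2+d\epsilon_3$. Since $|c\epsilon_1+e\epsilon_2+d\epsilon_3|\le c+d+e$, and since the elementary inequality $cd+ce+de\ge c+d+e$ holds for all positive integers $c,d,e$ with equality precisely when $c=d=e=1$, the relation forces either $m=0$, or $(c,d,e)=(1,1,1)$ with $|m|=1$.

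If $m=0$, then the three identities $\tilde l_{Id}-\tilde l_{(23)}=(d+e)(b_2-b_3)$, $\tilde l_{(132)}-\tilde l_{(12)}=(c+e)(b_2-b_3)$, and $\tilde l_{(123)}-\tilde l_{(13)}=(c+d)(b_2-b_3)$ give immediately $|\tilde l_{(23)}|=|\tilde l_{(12)}|=|\tilde l_{(13)}|=1$. In the exceptional case $(c,d,e)=(1,1,1)$, the compatibility forces $\epsilon_1=\epsilon_2=\epsilon_3=m=\pm 1$, and substitution into the same three difference relations once more yields $|\tilde l_{(23)}|=|\tilde l_{(12)}|=|\tilde l_{(13)}|=1$. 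Thus in either situation all six vertex isotropy groups are trivial; and since each edge $\bb S^2_{ij}$ has one endpoint of each parity with edge isotropy contained in that of each endpoint, every edge isotropy is also trivial, and hence $\T$ acts freely on $\SU(3)$. The case of three odd regular vertices is symmetric: substituting the three difference identities into the above compatibility yields the analogous relation $c\,\tilde l_{(23)}+e\,\tilde l_{(13)}+d\,\tilde l_{(12)}=-(cd+ce+de)\,m$, and the same argument applies.

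The main obstacle I anticipate is twofold: discovering the correct combination of coefficients $(c,e,d)$ that kills $a_1$ and $a_2$ in the three even $\tilde l_\sigma$ — in hindsight it arises as the cross product of the $a_1$- and $a_2$-coefficient vectors — and recognising that the sharp inequality $cd+ce+de\ge c+d+e$ is precisely what forces $m=0$ outside the single degenerate configuration $c=d=e=1$, which itself can then be settled by direct verification.
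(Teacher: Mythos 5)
Your key identity
\[
c\,l_{Id}+e\,l_{(123)}+d\,l_{(132)}=(cd+ce+de)(b_2-b_3)
\]
is correct (the coefficients of $a_1$ and $a_2$ do cancel), and combined with $cd+ce+de\ge c+d+e$ it gives a genuinely slicker disposal of the paper's Case~(1) (where $E^7_{p,q}$ itself is positively curved, so that after normalisation $c,d,e>0$) than the paper's own route through Corollary~\ref{cor:integers} and Lemmas~\ref{Lem:Free_Actions_NNew}--\ref{Lem:cy=2=ey}. However, there is a genuine gap in the reduction: you may not assume $c,d,e>0$. The hypothesis is that \emph{either} $\ESO$ \emph{or} $\mathrm{E}^7_{q,p}$ is positively curved, and in the normal form $p=(c,d,e)$, $q=(c+d+e,0,0)$ the second alternative reads $c,d,e\notin[0,c+d+e]$, which permits mixed signs. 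Your appeal to Theorem~\ref{thm:pos:curv:fibration} does not repair this: that theorem produces an equivalent representation in which \emph{some} $E^7_{p',q'}$ is cohomogeneity two and positively curved, but the separating edge may lie in $\Delta_P$, in which case the coincidence is among the $p'_i$; converting back to the normal form with $q_2=q_3=0$ (which is what Lemma~\ref{L:Change_of_Parameteres} requires) then lands you exactly in the branch where it is $E^7_{q,p}$ that carries the positive curvature. The paper spends the bulk of its proof on this branch, split into the six sign configurations (2a)--(2f) and relying on the integrality of $d=\frac{tx+(c-x)(r+cy)}{xy-l}$ together with Lemmas~\ref{Lem:Free_Actions_NNew}--\ref{Lem:Exclue_c+d+e=0}.

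Concretely, your dichotomy ``$b_2-b_3=0$ or $c=d=e=1$'' fails once signs mix: for $(c,d,e)=(-1,-1,3)$ one has $c+d+e=1$ and $c,d,e\notin[0,1]$, so the hypothesis is satisfied, yet $|cd+ce+de|=5=|c|+|d|+|e|$ and the compatibility relation $-5m=-\epsilon_1+3\epsilon_2-\epsilon_3$ admits solutions with $m=\pm1$ (take $\epsilon_1=\epsilon_3=-\epsilon_2=m$). The conclusion happens to survive there by a secondary computation of the odd $l_\sigma$'s, but your argument as written does not cover it, and nothing you have proved rules out worse behaviour in the other sign patterns. To complete the proof along your lines you would need to redo the inequality analysis, including its equality cases, for each of the six configurations of Case~(2) --- which is essentially the content of the paper's proof.
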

Now, we show how one  proves  Theorem \ref{thm:main:sing}, Part~\ref{thm:main:sing:same_Parity}:

\begin{proof}([Theorem \ref{thm:main:sing}, Part~\ref{thm:main:sing:same_Parity}])
Assume by contradiction that 
  all $\ast_\sigma$ contained in $\Sigma$ have the same parity. Then there are three regular $\ast_\sigma$'s with the same parity. By Theorem~\ref{thm:pos:curv:fibration}, there exists an equivalent Eschenburg orbifold $\mathcal{O}^{a^\prime, b^\prime}_{p^\prime,q^\prime}$ with the same singular set and such that $\mathrm{E}^7_{p^\prime,q^\prime}$ is a  positively curved  cohomogeneity $2$ Eschenburg orbifold.  
  By Proposition~\ref{Prop:Cohom_2}, such orbifold is a smooth manifold. This is a contradiction to $\Sigma \neq \emptyset$.
\end{proof}

\begin{proof}([Proposition~\ref{Prop:Cohom_2}])
 Let   $\tau, \tau^\prime \in \Sph_{3}$. Then  for the equivalent action of $S^1_{p_\tau,q_{\tau^\prime}} \times S^1_{a_\tau,b_{\tau^\prime}}$   we see that $N_\sigma = \tilde N_{\tau^\prime \circ \sigma \circ \tau}$, where $\tilde N_\sigma$ is the order of the isotropy group of the $S^1_{p_\tau,q_{\tau^\prime}} \times S^1_{a_\tau,b_{\tau^\prime}}$ action. By applying suitable reflections $\tau$ and $\tau^\prime$, we can assume
\begin{align*}
    p=(c,d,e), \quad q = (c+d+e,0,0)
\end{align*} without changing the parity of the $\ast_\sigma$'s.

Furthermore,  without loss of generality, we assume that the three regular points $\ast_{\sigma}$'s  have even parity, i.e. $N_{Id} = N_{(123)} = N_{(132)} =1$. More precisely, if we choose $\tau^\prime=Id$, $\tau=(12)$, then we have $\tilde N_{Id} =N_{(12)}$, $\tilde N_{(123)}=N_{(13)}$, $\tilde N_{(132)} =N_{(23)}$.\\
 We can assume that the parameters of the circle $\S^1_{a, b}$ are given by 
 $$a=(a_1, a_2, a_3), \quad b=(0, b_2, b_3).$$

Now, we proceed the proof by assuming that $N_{Id} = N_{(123)} = N_{(132)} =1$. Notice that this condition and the equations in Lemma~\ref{L:Change_of_Parameteres} imply that $\gcd(c, d)=\gcd(c, e)=\gcd(e, d)=1$, i.e.  $E^7_{p, q}$ is a smooth manifold. 

Observe that if $b_2=b_3$, then by equations in Lemma~\ref{L:Change_of_Parameteres}, all the isotropy groups are trivial. Thus $\mathcal{O}_{p,q}^{a,b}$ is a smooth manifold. From now on, throughout the proof, we assume that $b_2-b_3\neq 0$.

From Equations in Lemma~\ref{L:Change_of_Parameteres}  we get
\begin{align}
        l_{(132)}-l_{(12)} &= (c+e)\cdot(b_2 - b_3). \label{eq:132-12}
        \\ 
        l_{(123)} - l_{(13)} & = (c+d) \cdot (b_2 -b _3).\label{eq:123-13}
        \\
        l_{Id} - l_{(23)} & = (e+d) \cdot (b_2 -b _3).\label{eq:Id-23}
    \end{align}
 Since $\ESO$ or $\mathrm{E}^7_{q, p}$ has positive curvature, we have 
 \begin{align}\label{Eq:No_opposite_sign}
  0\neq  c\neq -d\neq 0, \quad 0\neq  c\neq -e\neq 0,\quad 0\neq  e\neq -d\neq 0.   
 \end{align}

 Therefore, we have:
 \begin{align*}
    l_{(132)}-l_{(12)}\neq 0,\quad   l_{(123)} - l_{(13)}\neq 0, \quad  l_{Id} - l_{(23)}\neq 0. 
\end{align*}
Let \begin{align*}
    x:=c+e, \quad y:=b_2-b_3, \quad l:=l_{(132)}, \quad t:=l_{\Id}, \quad r:=t-l_{(123)}.
\end{align*} 
To avoid treating similar cases and streamline the proof, we make the following additional choices.
By multiplying $p,q$, respectively $a, b$,   with $-1$, if necessary,    we can assume that $c+d+e \ge 0$, respectively $xy-l > 0$. 
After applying  these restrictions,  we need to consider the following four cases: $(l_{Id},l_{(123)})\in \lbrace (1,1), (1,-1), (-1, 1),(-1,-1)\rbrace$.  However,  if one of $l_\sigma =1$ for $\sigma \in \lbrace Id, (123) \rbrace$, we can always assume it to be $l_{Id}$. Otherwise, we apply $(13)$ to $p$ and $a$ and $(23)$ to $q$ and $b$ to obtain  the treated case. As a result, we only need to consider three cases: $(l_{Id},l_{(123)})\in \lbrace (1,1), (1,-1),(-1,-1)\rbrace$. In other words, whenever $t=1$, we have $r=0$, or $r=2$ and whenever $t=-1$, we only have $r=0$.

From Equation \eqref{eq:132} and since $x=c+e\neq 0$, we get \begin{align}
    a_1=\frac{l-ca_2}{x}+b_3=\frac{1-ca_2}{x}+b_2-y. \label{eq:Case2_a1}
\end{align}
 We have 
$$r=l_{Id}-l_{(123)}=-a_2(d+x)-cy+xb_2.$$ Then we get
\begin{align}\label{Eq:Case2_b2}
    b_2=\frac{r+a_2(d+x)+cy}{x}.
\end{align}
 From \eqref{eq:Id}, \eqref{Eq:Case2_b2}, and \eqref{eq:Case2_a1},   we have  $$xt=xl_{Id}=d(xy-l)+(x-c)(r+cy).$$
Note that $xy-l=-l_{(12)}\neq 0$. Whence we have $$d=\frac{xt+(c-x)(r+cy)}{xy-l}.$$

by substituting $d$ in \eqref{Eq:Case2_b2}, we get 
$$b_2:=\frac{r(xy-l)+a_2(xt+(c-x)(r+cy)+x^2y-x)+cy(xy-l)}{x(xy-l)}.$$
Now  we list all the parameters of the torus representation as follow:

 \begin{equation}
\begin{aligned}\label{Eq:xy-1}
 p &= (c,\frac{xt+(c-x)(r+cy)}{xy-l} ,x-c), \qquad q=(\frac{xt+(c-x)(r+cy)}{xy-l}+x, 0,0),\\ a_1 &= \frac{l-ca_2}{x}+\frac{r(xy-l)+a_2(xt+(c-x)(r+cy)+x^2y-x)+cy(xy-l)}{x(xy-l)}-y,\\
 a_3&=\frac{r(xy-l)+a_2(xt+(c-x)(r+cy)+x^2y-x)+cy(xy-l)}{x(xy-l)}-\frac{l-ca_2}{x}-a_2,\\
 b_1&=0,\\
 b_2&=\frac{r(xy-l)+a_2(xt+(c-x)(r+cy)+x^2y-x)+cy(xy-l)}{x(xy-l)},\\
 b_3&=\frac{r(xy-l)+a_2(xt+(c-x)(r+cy)+x^2y-x)+cy(xy-l)}{x(xy-l)}-y.
\end{aligned}
\end{equation}

	Recall that  $xy \neq 0$ and $xy>l$. That is, $xy\geq 1$, and  if  $xy =1$, then  $l = -1$. Further,  $x$ and $y$ always have the same sign.

After collecting the data, we proceed with   the following simple but useful lemma. 
	\begin{lemma}\label{lem:integers}
		Let $a,b \in \Z$, such that $\vert a  \cdot b\vert < a+b$, then $a\in \{0,1\}$ or $b\in\{0,1\}$. Moreover, $a, b\geq 0$.
	\end{lemma}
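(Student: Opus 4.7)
The plan is to split the statement into two independent claims: (i) both $a,b\geq 0$, and (ii) under non-negativity, the inequality $|ab|<a+b$ forces $a\in\{0,1\}$ or $b\in\{0,1\}$. Each is handled by a short case analysis, with no real obstacle; the only thing to be careful about is keeping track of signs when removing the absolute value.

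For (i), I would argue by contradiction. Assume, without loss of generality, $a<0$, so $a\le -1$. If additionally $b\le 0$, then $a+b\le -1<0\le |ab|$, contradicting the hypothesis. So $b\ge 1$. But then $|ab|=|a|\,b\ge b$, and the hypothesis $|ab|<a+b$ gives $b\le |a|\,b<a+b$, hence $a>0$, contradicting $a<0$. So $a\ge 0$, and symmetrically $b\ge 0$.

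For (ii), with $a,b\ge 0$ the absolute value disappears and the inequality becomes $ab<a+b$. Suppose toward a contradiction that $a\ge 2$ and $b\ge 2$. Then $a-1\ge 1$ and $b-1\ge 1$, so $(a-1)(b-1)\ge 1$, which rearranges to $ab\ge a+b$, contradicting $ab<a+b$. Therefore $a\le 1$ or $b\le 1$, and combined with $a,b\ge 0$ this yields $a\in\{0,1\}$ or $b\in\{0,1\}$, as required.

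The hardest part is essentially nothing beyond bookkeeping; the proof is a two-line case split followed by the standard factorisation $(a-1)(b-1)\ge 1\iff ab\ge a+b$ for non-negative integers. I expect the entire proof to fit in a few lines.
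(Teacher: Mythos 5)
Your proof is correct and follows essentially the same elementary route as the paper: a sign analysis ruling out negative values, followed by a small algebraic manipulation to force one variable into $\{0,1\}$. The only cosmetic difference is that the paper orders $a\le b$ and deduces $(a-1)b<b$, whereas you use the symmetric factorisation $(a-1)(b-1)\ge 1$; both are equally valid.
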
 
	\begin{proof}
		Without loss of generality, we assume that $a\le b$. By assumption $b> 0$. If $a<0$, then $-ab<a+b<b$, which is not possible. Assume $a>  0$. Then 
		\begin{align*}
			ab<a+b\le2b \,&\Rightarrow \, (a-1)b<b \,\Leftrightarrow\, a-1<1
			\,\Rightarrow \, 0< a<2,
		\end{align*}
		hence the result. 
	\end{proof}
 From the lemma we have the following corollary regarding the parameters: 
 \begin{corollary}\label{cor:integers}
     \begin{enumerate}
         \item  Assume that  $de>0$, $cx>0$.  Then $c$ and $e$ have the same sign and $\vert c\vert = 1$, or $\vert e \vert =1$. Furthermore, $t=1, r = 0$ and $\vert y \vert =1$.
         \item If $de<0$ then  $cy+r\ge 0$.
     \end{enumerate}
 \end{corollary}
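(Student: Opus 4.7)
Both parts should fall out of combining Lemma~\ref{lem:integers} with the identity
\begin{equation*}
d(xy-l) = xt - e(r+cy)
\end{equation*}
coming from the definitions of $d$ and the parameters in \eqref{Eq:xy-1}, together with the two basic structural constraints $xy-l\ge 1$ and $c+d+e = d+x\ge 0$ (equivalently $d\ge -x$). Recall also the restricted range $(t,r)\in\{(1,0),(1,2),(-1,0)\}$.

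For Part~(1), the hypothesis $cx>0$ together with $c\ne 0$, $e\ne 0$, and $c+e=x\ne 0$ already puts the triangles $\Delta_P$, $\Delta_Q$ into a configuration where the positive sectional curvature condition (Corollary~\ref{C:Sec>0_Geometric}) is only possible if one edge of $\Delta_Q$ is separated from $\Delta_P$ by a line of a very specific orientation. The plan is to translate this geometric separation, combined with $de>0$, into the statement that $d$ is an integer with $d(xy-l)$ bounded both below by $-x(xy-l)$ and above by a linear expression in $c,e$; rearranging, this will produce an inequality of the shape
\begin{equation*}
|c\cdot e| < |c| + |e|.
\end{equation*}
Lemma~\ref{lem:integers} then forces $|c|=1$ or $|e|=1$ and, crucially, forces $c$ and $e$ to be \emph{both nonnegative} (under the chosen sign convention $c+d+e\ge 0$), giving the same-sign conclusion. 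Plugging this back into the formula for $d$, the requirement that $d\in\mathbb{Z}$ forces $xy-l$ to divide a small integer; combined with $xy-l\ge 1$ and the restricted list of $(t,r)$, this pins down $t=1$, $r=0$, and $|y|=1$.

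For Part~(2), the argument is a direct sign computation. From $d(xy-l) = xt - e(cy+r)$ and $d\ge -x$, together with $xy-l\ge 1$, we get
\begin{equation*}
xt - e(cy+r) \ge -x(xy-l),
\end{equation*}
i.e. $x(t+xy-l)\ge e(cy+r)$. The hypothesis $de<0$ determines the relative signs of $d,e$ and, through $d\ge -x$, of $x$ and $e$; a short case analysis (splitting on the sign of $x$, equivalently of $y$) then reduces this inequality to $cy+r\ge 0$ as claimed.

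The main obstacle is Part~(1): the only non-mechanical step is isolating the right inequality of the form $|ab|<a+b$ from the positive curvature condition and the integrality of $d$. The sign-tracking is delicate because $cx>0$ does \emph{not} immediately give $ce>0$, and one must use $d\ge -x$ together with $de>0$ to rule out the mixed-sign case before Lemma~\ref{lem:integers} applies. Once the inequality is in hand, all remaining conclusions ($|c|=1$ or $|e|=1$, $t=1$, $r=0$, $|y|=1$, and $c,e$ nonnegative) are forced by direct substitution using the divisibility $xy-l\mid xt-e(r+cy)$ and the finite list of admissible $(t,r)$.
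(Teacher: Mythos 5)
Your proposal correctly identifies the two essential ingredients — the identity $d(xy-l)=tx-e(r+cy)$ from \eqref{Eq:xy-1} and Lemma~\ref{lem:integers} applied to an inequality of the shape $\vert ab\vert<a+b$ — but the mechanism you propose for producing that inequality does not work. The paper's proof is a pure sign computation: setting $s=\sgn d$, the facts that $d$ is a \emph{nonzero} integer and that $xy-l>0$ give $0<sd(xy-l)=tsx-se(cy+r)$, hence $se(cy+r)<tsx$, and both parts follow by chasing signs using $de\gtrless 0$ and $cy>0$ (resp.\ $cy+r<0$). You instead take as your only quantitative input the one-sided bound $d\ge -x$ coming from the normalization $c+d+e\ge 0$, which yields $e(cy+r)\le x(t+xy-l)$. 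This is too weak: whenever $d<0$ it is the \emph{upper} bound $d\le -1$ that carries the information, and your lower bound gives nothing. Concretely, in Part~(2) with $d<0$, $e>0$ and $cy+r<0$ one has $x>0$ (because $cy+r<0$ forces $cx<0$, hence $ce<-c^2$ and $\sgn x=\sgn e$), so $e(cy+r)<0\le x(t+xy-l)$ and your inequality is automatically satisfied — no contradiction is reached. The same failure occurs in Part~(1) in the sub-case $d,e<0$, $x>0$, which is exactly the mixed-sign case that must be excluded. Note also that your claim that $d\ge-x$ ``determines the relative signs of $x$ and $e$'' is false when $d>0$; in the paper that sign relation is extracted from the contradiction hypothesis $cy+r<0$ itself, not from the normalization.

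Two smaller points. First, the corollary is a purely arithmetic statement about the parameters in \eqref{Eq:xy-1}; its proof does not invoke positive curvature or Corollary~\ref{C:Sec>0_Geometric} (curvature enters only later, to decide which sign patterns of $c,d,e$ actually occur), so the ``geometric separation'' step in your Part~(1) is both unnecessary and unjustified. Second, Lemma~\ref{lem:integers} is applied to $a=tse$, $b=tsc$ and yields $tse,tsc\ge 0$, i.e.\ that $c$ and $e$ share the sign $ts$ — not that they are nonnegative — and the conclusions $r=0$, $\vert y\vert=1$, $t=1$ then follow from the strict inequality $\vert e\vert(cy+r)<\vert e\vert+\vert c\vert$ together with $cy\ge\vert c\vert$, not from a divisibility argument on $xy-l$.
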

 \begin{proof}
Let $s = \mathrm{sgn}\,d$. Then 
         \begin{align*}
             0<sd(xy-l)  &= tsx-se(cy+r) \\
             &\Rightarrow  se(cy+r) < tsx.
         \end{align*}
         
     \begin{enumerate}
         \item 
         Since $xy >0$, we have $cy>0$ by the assumption that $cx>0 $. Further, $ed>0$ implies that   $se>0$. Therefore,  $\vert (tse)\cdot(tsc)\vert=\vert ec\vert \le  se \cdot cy \le se(cy+r)<tsx=tse+tsc$.  By Lemma~\ref{lem:integers}, we get $\vert c\vert=1$, $\vert e\vert=1$ and $c, e$, consequently $x, d$, have the same sign.  It immediately follows, that $r=0$ and $\vert y\vert=1$.
         Moreover, since $tsx>0$, $st=\sgn~
         x=\sgn~d=s$. Thus $t=1$.  
         \item Suppose $cy+r<0$. Then $c$ and $y$ have opposite sign  and $0<se(cy+r)$.  Furthermore $cx = c^2 +ce<0$ which implies that   $x$ and $e$ have the same sign. Therefore, $t=-1$ and we get  
         \begin{align*}
             -se<(-se).(-(cy+r)) < -sx = -sc-se
         \end{align*}
         Which is a contradiction, since $-se>0$ and $-sc <0$.   
     \end{enumerate}

Now, we identify some of the cases where we have free actions:
\begin{lemma}\label{Lem:Free_Actions_NNew}
In the following cases we have free actions: 
\begin{enumerate}
    \item $( l, cy)=(1, 1)$. 
    \item $(t, r,l, cy)=(1, 2, 1,-1)$. 
\end{enumerate}
The same holds true for $ey$, replacing $cy$.
     \end{lemma}
 \begin{proof}
     The proof is essentially by the fact that $d=\frac{tx+(c-x)(cy+r)}{xy-l}$ is an integer and hence the parameters appearing in the statement of the lemma result in a free action.  
 \end{proof}
 \begin{lemma}\label{Lem:cy=2=ey}
  Under the assumptions in \eqref{Eq:No_opposite_sign}, we have 
  \begin{enumerate}
  \item $( l, cy)\neq (-1, 1)$. 
  \item $(t, r, l,cy)\neq (1, 2, 1, 2)$. 
  \item $(t, r, l,cy)=(-1, 0, 1, 2)$. 
    \item $(t, r, cy)=(1, 2, -2)$. 
    \end{enumerate}
    The same holds true for $ey$, replacing $cy$.
 \end{lemma}
  \begin{proof}
     Similarly to the proof of Lemma~\ref{Lem:Free_Actions_NNew}, we use the fact that $d=\frac{tx+(c-x)(cy+r)}{xy-l}$ is an integer; however, in this case, this leads to a contradiction with \eqref{Eq:No_opposite_sign}.
 \end{proof}
     
	We will now check the positive curvature condition for $E^7_{p,q}$ and $E^7_{q, p}$. 
 We have the following cases to consider:
	\begin{enumerate}
		\item \label{cases:easy} $0,c+d+e \notin [\min\{c,d,e\}, \max\{c,d,e\}]$, which is equivalent to $c,d,e >0$ 
		\item \label{cases:hard} $c,d,e \notin [0,c+d+e]$, which is equivalent to one of the following six cases:
  \begin{multicols}{2}
      \begin{enumerate}
			\item\label{cases:hard:1} $c<0$, $d,e >c+d+e$ 
			\item\label{cases:hard:2} $d<0$, $c,e >c+d+e$
			\item\label{cases:hard:3} $e<0$, $d,c >c+d+e$
			\item\label{cases:hard:4} $c,d<0$, $e >c+d+e$
			\item\label{cases:hard:5} $c,e<0$, $d >c+d+e$
			\item\label{cases:hard:6} $d,e<0$, $c >c+d+e$
		\end{enumerate}
  \end{multicols}
	\end{enumerate}
 First, we exclude the case where $c+d+e=0$:
\begin{lemma}\label{Lem:Exclue_c+d+e=0}
If the conditions in \eqref{Eq:No_opposite_sign} are satisfied, then $c+d+e\neq 0$.  
\end{lemma}
\begin{proof}
    By contradiction, assume that  $d=-x$. Then we have $$y(c^2-xc+x^2)=(l-t)x+r(x-c).$$
    Depending on the values of $t, r, l$, the right hand side is either $0$, or takes one of the following forms: $2lx$, $2(x-c)$, or $-2c$. Since $\gcd (c, x)=\gcd (c, x-c)=1$, it follows that  $c^2-xc+x^2$ must divide $2$. 
    
    Now observe that the equation $c^2-xc+x^2=L$, for $L=\pm 1, \pm 2$  has no solutions if $\vert c\vert\geq 2$. For $\vert c\vert=1$, it  has integer solutions if and only if $L=1$, yielding  $x=0$, or $x=c$, both of which contradict our assumptions.
\end{proof}
 
 \noindent{Case~\eqref{cases:easy}.} We have $x = c+e >0$, since $c,e>0$. Thus $y>0$. Furthermore, $d>0$. By Corollary~\ref{cor:integers} either $c=1$ or $e=1$, and $r = 0$, $t=1$, $y = 1$.  By Lemma~\ref{Lem:cy=2=ey}, we have $l=1$. Then from Lemma~\ref{Lem:Free_Actions_NNew}  it follows that the action is free.    \\ {Case~\eqref{cases:hard}}. Note that by Corollary~\ref{cor:integers}, in Cases~\eqref{cases:hard:1} and~\eqref{cases:hard:6} we get free actions. In the sequel, we analyze other cases.
 \\{Case~\eqref{cases:hard:2}.} Note that 
\begin{align*}
    (xy-l)(d+c) &= xt+(c-x)(cy+r)+xyc-lc
    =x(t-r)+c(cy+r-l). \\
    (xy-l)(d+e) &= xt-e(cy+r)+xye-le
   =xt+e(ey-r-l).
\end{align*}
First assume that $t=1$. Then since $x=e+c>0$, and $d+e<0$, we must have $ey-r-l<0$. Consequently, $0<ey<r+l$. Thus $l=1, r=2$,  and 
$ey=1$, or $ey=2$. By Lemma~\ref{Lem:cy=2=ey},  $ey\neq 2$, and   by Lemma~\ref{Lem:Free_Actions_NNew}, the action is free if $ey=1$.

Now, let $t=-1$. Then $r=0$. Then $(xy-l)(d+c) 
    =-e+c(cy-l-1),$ and $(xy-l)(d+e) 
   =-c+e(ey-l-1).$ We divide the argument into two cases: $e\geq c$, and $e<c$. Assume first that $e< c$. Then from $ 
    c(cy-l-1)<e<c$ we obtain  $c(cy-l-2)<0$. That is $0<cy<l+2$. Hence $l=1$ and $cy=1$, or $cy=2$. The latter does not occur and the former gives a free action, by Lemmas~\ref{Lem:cy=2=ey}, and \ref{Lem:Free_Actions_NNew}. Similarly, if $e\geq c$, we have the same conclusion. \\                    {Case~\eqref{cases:hard:3}}. By assumptions, we have $e<0$, $x = c+e<0$ and $d+e<0$ with $d,c>0$. By Corollary~\ref{cor:integers}, we have $cy+r \ge 0$ and hence $r = 2$, $t=1$. Then $0>cy \ge -2$. That is   $cy = -1$, or $cy=-2$. The latter does not occur by Lemma~\ref{Lem:cy=2=ey}. If $cy=-1$, then since \begin{align*}
    d = {x-e \over xy-l}\in \Z
\end{align*}
and $xy\neq 0$, we get  $l=1$. By Lemma~\ref{Lem:Free_Actions_NNew}, this gives a free action. \\
Case~\eqref{cases:hard:4}. We have $c,d <0$ and $e>0$. Furthermore $d+x> 0$, and therefore $x>0$.  Consequently, $y >0$. The rest proceeds analogously  to 
Case~\eqref{cases:hard:3}.\\
Case~\eqref{cases:hard:5}. In this case $c<0, e<0$, and hence $x<-2, y<0$.  Moreover,  $$\tilde A\coloneqq x(t-l)+ r(c-x)+y(x^2-cx+c^2)=(xy-l)(d+x)>0$$  
   Since    $x^2-cx+c^2>0$, for $t=1$ we have $(x^2-cx+c^2)y+x(1-l)<0$.  Thus $r=2$. However, in this case we get,  $$x(1-l)+c^2y+(x-c)(xy-2)<0, $$ for  $xy\geq 2$. 
    Therefore,   $t=-1, r=0$, and consequently $l=1$. Then we have $$\tilde A=-2x+c^2y-xcy+x^2y=x(xy-cy-2)+c^2y.$$ It follows that  $xy-cy-2<0$. Since  $y(x-c)>0$,  we can only have $ey=y(x-c)=1$. By Lemma~\ref{Lem:Free_Actions_NNew}, this results in a free action. This finishes the proof of Proposition~\ref{Prop:Cohom_2}.
 \end{proof}

\subsection{Proof of Theorem \ref{thm:main:sing}, Part~\ref{thm:main:sing:smoothsphere}}\label{SS:Part_2}
In this section, we prove  Part~\eqref{thm:main:sing:smoothsphere}. Note that under the assumptions, four of the $\ast_\sigma$'s are regular. Furthermore by Part~\eqref{thm:main:sing:same_Parity} the vertices contained in $\Sigma $ have different parities. Now, we explain the strategy. \\

\noindent{\textbf{Strategy.}}  

Let $\EO$ be an Eschenburg orbifold  such that exactly two vertices $\ast_\sigma$ are contained in $\Sigma$ and that they have different parities. We show that  either the  sphere connecting the  two singular vertices is smooth or $\EO$ does not admit positive curvature. To this end, we use Corollary~\ref{C:Sec>0_Geometric} in the following way: 
   For each edge of the triangles $\Delta(P_1,P_2,P_3)$ and $\Delta(Q_1,Q_2,Q_3)$, we  project all $P_i$'s and $Q_i$'s to the line orthogonal to that edge passing through a particular point (separating axis). We show that the projection of at least one vertex of $\Delta_Q$  lies in $J_P$, the projection of $P$ to the separating axis. 
   
More precisely, we compute the following six quantities for an edge spanned by $Z,Y \in \lbrace P_1,P_2,P_3\rbrace $ or $Z,Y \in \lbrace Q_1, Q_2, Q_3\rbrace$, denoted by $L(Y, Z)$:
\begin{align}\label{Eq:Proj}
    \pr(X) \coloneqq \langle (Z-Y)^\perp, X - W\rangle \text{ for } X \in \lbrace Q_1, Q_2, Q_3, P_1, P_2, P_3\rbrace
\end{align}
Here $v^\perp = (-v_2,v_1)$ for any non zero vector $v \in \R^2$ and $W$ is a particular point chosen accordingly to simplify the computations. 
It will not affect  whether the projections are intersecting or not. 
We collect the projections into a table as follows. Figure~\ref{fig:strat} provides an illustration of the strategy if $\EO$ admits positive curvature.
\begin{center}
\begin{tabular}{cc@{\hspace{5mm}}c@{\hspace{5mm}}}
 \multicolumn{3}{c}{$L(Y, Z)$} \\ \mytableextraspace
      \toprule
 $i\backslash X$ & $Q_i$  & $P_i$ \\  
  \mytableextraspace
\midrule
 $1$ &  $\pr(Q_1)$ & $\pr(P_1)$ \\
 [0.15cm] \mytableextraspace
 $2$ &$\pr(Q_2)$ & $\pr(P_2)$  \\ 
 [.15cm] \mytableextraspace
$3$  &$\pr(Q_3)$ & $\pr(P_3)$
\\  
\mytableextraspace
\bottomrule\\
\end{tabular}
\end{center}

    \begin{figure}[h!]
        \centering
                \caption{Strategy}
        \label{fig:strat}
        
\definecolor{ttqqqq}{rgb}{0.2,0,0}
\definecolor{ffqqqq}{rgb}{1,0,0}
\definecolor{qqzzqq}{rgb}{0,0.6,0}
\resizebox{\textwidth}{!}{
\begin{tikzpicture}[line cap=round,line join=round,>=triangle 45,x=1cm,y=1cm]
\clip(-11,-6) rectangle (11,5);
\draw (-9,4.5) node[anchor=north west] {(1) The line $L(Q_1,Q_2)$ is not separating.};
\draw (1,4.5) node[anchor=north west] {(2) The line $L(P_1,P_3)$ is separating.};
\fill[line width=2pt,color=qqzzqq,fill=qqzzqq,fill opacity=0.1] (-6,3) -- (-7,-1) -- (-3,-2) -- cycle;
\fill[line width=2pt,color=qqzzqq,fill=qqzzqq,fill opacity=0.1] (4,3) -- (3,-1) -- (7,-2) -- cycle;
\fill[line width=2pt,fill=black,fill opacity=0.1] (-8,-3) -- (-4,1) -- (-4,3) -- cycle;
\fill[line width=2pt,fill=black,fill opacity=0.1] (6,3) -- (2,-3) -- (6,1) -- cycle;
\draw [line width=2pt,color=qqzzqq] (-6,3)-- (-7,-1);
\draw [line width=2pt,color=qqzzqq] (-7,-1)-- (-3,-2);
\draw [line width=2pt,color=qqzzqq] (-3,-2)-- (-6,3);
\draw [line width=2p   t,color=ffqqqq] (-4,3)-- (-4,1);
\draw [line width=2pt] (-9.98,-4)-- (-1,-4);
\draw [line width=2pt,dash pattern=on 3pt off 6pt,color=qqzzqq] (-7,-1)-- (-6.986666666666667,-4);
\draw [line width=2pt,dash pattern=on 3pt off 6pt,color=qqzzqq] (-3,-2)-- (-2.9955555555555557,-4);
\draw [line width=2pt,dash pattern=on 3pt off 6pt,color=ffqqqq] (-4,1)-- (-4,-4);
\draw [line width=2pt,dash pattern=on 3pt off 6pt,color=qqzzqq] (-6,3)-- (-6,-4);
\draw [line width=3.2pt,color=qqzzqq] (-6.986666666666667,-4)-- (-2.9955555555555557,-4);
\draw [line width=2pt,dash pattern=on 3pt off 6pt,color=ffqqqq] (-8,-3)-- (-8,-4);
\draw [line width=2pt,color=qqzzqq] (4,3)-- (3,-1);
\draw [line width=2pt,color=qqzzqq] (3,-1)-- (7,-2);
\draw [line width=2pt,color=qqzzqq] (7,-2)-- (4,3);
\draw [line width=2pt,color=ffqqqq] (6,3)-- (6,1);
\draw [line width=2pt,color=ttqqqq] (1.8297595217747782,-6.4307902255384235)-- (9.986845889073436,-1.536538405159229);
\draw [line width=2pt,color=qqzzqq] (5.086167325854393,-4.476945543090655)-- (7.586167325854393,-2.976945543090655);
\draw [line width=2pt,dash pattern=on 3pt off 6pt,color=qqzzqq] (3,-1)-- (5.086167325854393,-4.476945543090655);
\draw [line width=2pt,dash pattern=on 3pt off 6pt,color=qqzzqq] (7,-2)-- (7.586167325854393,-2.976945543090655);
\draw [line width=2pt,dash pattern=on 3pt off 6pt,color=ffqqqq] (6,1)-- (8.17440261997204,-2.6240043666200665);
\draw [line width=2pt,dash pattern=on 3pt off 6pt,color=ffqqqq] (6,3)-- (9.05675556114851,-2.094592601914184);
\draw [line width=2pt,dash pattern=on 3pt off 6pt,color=ffqqqq] (2,-3)-- (3.4685202670308635,-5.447533778384773);
\draw [line width=2pt,color=ffqqqq] (8.17440261997204,-2.6240043666200665)-- (9.05675556114851,-2.094592601914184);
\draw [line width=2pt] (-8,-3)-- (-4,1);
\draw [line width=2pt,color=ffqqqq] (-4,1)-- (-4,3);
\draw [line width=2pt] (-4,3)-- (-8,-3);
\draw [line width=2pt] (6,3)-- (2,-3);
\draw [line width=2pt] (2,-3)-- (6,1);
\draw [line width=2pt,color=ffqqqq] (6,1)-- (6,3);
\begin{scriptsize}
\draw [fill=qqzzqq] (-6,3) circle (2.5pt);
\draw[color=qqzzqq] (-5.930781801239991,3.4) node {$P_1$};
\draw [fill=qqzzqq] (-7,-1) circle (2.5pt);
\draw[color=qqzzqq] (-7.464825121830485,-0.8694668441598349) node {$P_2$};
\draw [fill=qqzzqq] (-3,-2) circle (2.5pt);
\draw[color=qqzzqq] (-2.6435461142603636,-1.7679779319342632) node {$P_3$};
\draw [fill=ffqqqq] (-4,3) circle (2.5pt);
\draw[color=ffqqqq] (-3.9803552936320794,3.4) node {$Q_1$};
\draw [fill=ffqqqq] (-4,1) circle (2.5pt);
\draw[color=ffqqqq] (-3.6,1) node {$Q_2$};
\draw [fill=ffqqqq] (-8,-3) circle (2.5pt);
\draw[color=ffqqqq] (-8.5,-3) node {$Q_3$};
\draw [fill=ffqqqq] (-4,-4) circle (2.5pt);
\draw[color=ffqqqq] (-3.9365254844723507,-4.2) node {$\pr(Q_1) = \pr(Q_2)$};
\draw [fill=qqzzqq] (-6.986666666666667,-4) circle (2.5pt);
\draw[color=qqzzqq] (-6.91695250733388,-4.2) node {$\pr(P_2)$};
\draw [fill=qqzzqq] (-2.9955555555555557,-4) circle (2.5pt);
\draw[color=qqzzqq] (-2.4024821638818574,-3.6) node {$\pr(P_3)$};
\draw [fill=qqzzqq] (-6,-4) circle (2.5pt);
\draw[color=qqzzqq] (-5.843122182920535,-4.2) node {$\pr(P_1)$};
\draw[color=qqzzqq] (-5,-3.8) node {$J_P$};
\draw [fill=ffqqqq] (-8,-4) circle (2.5pt);
\draw[color=ffqqqq] (-7.925038118007633,-4.2) node {$\pr(Q_3)$};
\draw [fill=qqzzqq] (4,3) circle (2.5pt);
\draw[color=qqzzqq] (4.018584878018349,3.4) node {$P_1$};
\draw [fill=qqzzqq] (3,-1) circle (2.5pt);
\draw[color=qqzzqq] (2.594116080327177,-0.9132966533195631) node {$P_2$};
\draw [fill=qqzzqq] (7,-2) circle (2.5pt);
\draw[color=qqzzqq] (7.327735469577842,-1.724148122774535) node {$P_3$};
\draw [fill=ffqqqq] (6,3) circle (2.5pt);
\draw[color=ffqqqq] (5.990926290206127,3.4) node {$Q_1$};
\draw [fill=ffqqqq] (6,1) circle (2.5pt);
\draw[color=ffqqqq] (6.4,1) node {$Q_2$};
\draw [fill=ffqqqq] (2,-3) circle (2.5pt);
\draw[color=ffqqqq] (1.5,-3) node {$Q_3$};
\draw [fill=qqzzqq] (7.586167325854393,-2.976945543090655) circle (2.5pt);
\draw[color=qqzzqq] (8.796034076428743,-3.0828722067261096) node {$\pr(P_1)=\pr(P_3)$};
\draw[color=qqzzqq] (6.5,-4) node {$J_P$};
\draw [fill=ffqqqq] (3.4685202670308635,-5.447533778384773) circle (2pt);
\draw[color=ffqqqq] (4.237733923816991,-5.383937187611841) node {$\pr(Q_3)$};
\draw [fill=qqzzqq] (5.086167325854393,-4.476945543090655) circle (2pt);
\draw[color=qqzzqq] (5.7279474352477555,-4.4854260998374125) node {$\pr(P_2)$};
\draw [fill=ffqqqq] (8.17440261997204,-2.6240043666200665) circle (2pt);
\draw[color=ffqqqq] (8.8836936947482,-2.6) node {$\pr(Q_2)$};
\draw [fill=ffqqqq] (9.05675556114851,-2.094592601914184) circle (2pt);
\draw[color=ffqqqq] (9.738374973362902,-2.1) node {$\pr(Q_1)$};
\end{scriptsize}
\end{tikzpicture}
}
    \end{figure}
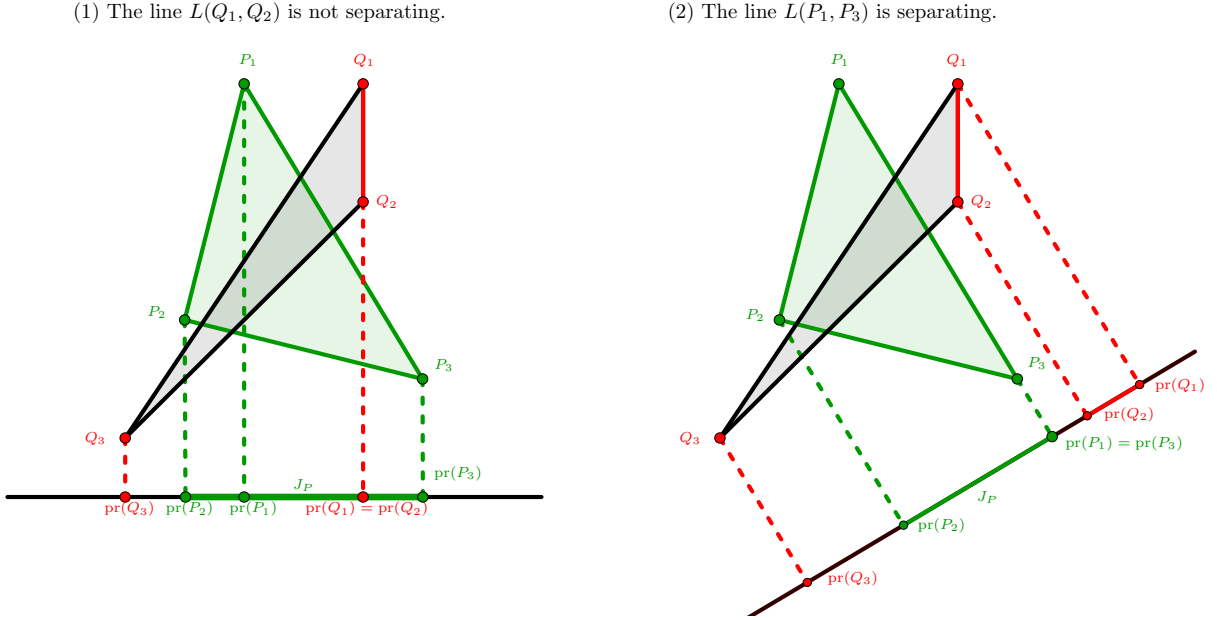

Now, we proceed with the proof.  First note that by a similar argument as in the proof of Part~\eqref{thm:main:sing:same_Parity},  we can assume,  without loss of generality,  that $N_{(123)} = N_{(132)} = N_{(12)} = N_{(13)} = 1$. By Remark~\ref{R:Change_of_Parameteres}, we assume that the parameters are given by 
\begin{align*}
		p = (c,d ,e), \quad q=(c+d+e, 0,0) \quad a = (a_1, a_2, a_3), \quad b=(0,b_2,b_3).
	\end{align*}
Note that the operations led to this representation do not change the isotropy groups.
If $b_2=b_3$, then it follows from  Lemma~\ref{L:b_2=b_3} that the sphere connecting  the two singular points  is smooth. Therefore, as in Part~\eqref{thm:main:sing:same_Parity}, we assume, throughout the proof of this part, that $b_2\neq b_3$. \\

We divide the argument into two cases: 
\begin{enumerate}
    \item $l_{(132)} - l_{(12)} = 0$,
    \item  $l_{(132)} - l_{(12)} \neq 0$. 
\end{enumerate}

We remark that we  follow  the conventions regarding the sign of $l_\sigma$'s
  that we made in the proof of Part~\eqref{thm:main:sing:same_Parity}; however, to avoid repetition, we do not mention them here.\\

\noindent\textbf{Case (1)}. Let $l_{(132)} = l_{(12)} = 1$. Then we have the following parameters:
\begin{align*} 
    p = (1,d ,-1), \quad q=(d, 0,0), \quad a = (a_1, 1, a_3), \quad b=(0,b_2,b_3).
\end{align*}
Now, there are two subcases to consider: $l_{(123)} = l_{(13)}$ and $l_{(123)} = -l_{(13)} $.

If $l_{(123)} = l_{(13)}$,  we get from Equation \eqref{eq:123-13} that $d= -1$ and from Equation \eqref{eq:123} that  
\begin{align*}
    l \coloneqq l_{(123)} &= 1 + a_1 -b_2 -b_3 = -a_3 \\
    \Leftrightarrow a_1 &= l_{(123)} -1 +b_2+b_3
\end{align*}
As a result and  from Equations~\eqref{eq:Id}, \eqref{eq:(23)}, we, respectively, have
\begin{align*}
    l_{Id} &= 1+ l +b_3 -b_2,\\
    l_{(23)} &= 1+ l +b_2 -b_3.
\end{align*}
Note that $N_{11} = \gcd(l+1,b_2-b_3)$.
Let us first collect the torus parameters in this case:

\begin{align} \label{eq:case2:reduction_case_2}
    p = (1,-1 ,-1), \quad q=(-1, 0,0), \quad a = (l-1+b_2+b_3, 1, -l), \quad b=(0,b_2,b_3).
\end{align}

 First observe that if $l=-1$,  we have $l_{Id}=b_3 -b_2=-l_{(23)}$. Then either the action is free, or by a variant of Lemma~\ref{L:b_2=b_3}, the connecting sphere is a  smooth sphere. Therefore, we assume that $l\neq -1$. Then in this case we can directly argue by Lemma~\ref{P:Same_Centroid} and Corollary~\ref{C:Sec>0_Geometric} to show that the Eschenburg orbifolds given by the parameters in \eqref{eq:case2:reduction_case_2}, for $l=1$, do not admit positive curvature. In fact,  a glance at the parameters  reveals that the vertex  $ \begin{pmatrix}
    -1\\0
\end{pmatrix}$ of the triangle $\Delta_Q$ is the midpoint of the edge  joining $\begin{pmatrix}
    -1\\1
\end{pmatrix}$  and $\begin{pmatrix}
    -1\\-1
\end{pmatrix}$ of the triangle $\Delta_P$. Hence by Lemma~\ref{P:Same_Centroid}, the mid point of $L(Q_2, Q_3)$ intersects $\Delta_P$. It is then immediate from Corollary~\ref{C:Sec>0_Geometric} that this  orbifold  cannot admit positive curvature. \\

\noindent
Now let  $l \coloneqq l_{(123)} = -l_{(13)}$.  By Equation \eqref{eq:123-13}, we have $(c+d)(b_2-b_3) = 2l$. Let $c+d = x$ and $b_2 -b_3 =y$. Then $xy = 2l$.  From Equation \eqref{eq:123} we have 
\begin{align*}
    (x-1)a_1 = b_2(x-2)+y+1-l. 
\end{align*}
From Equations~\eqref{eq:Id}, \eqref{eq:(23)},  we get 
\begin{align*}
    l_{Id} &= 1+l -x-y\\
    l_{(23)} & = 1-l+y-x.
\end{align*}
Note that if $x=1$, then $l_{Id}=-l=-l_{(23)}$, hence the action is free. Therefore, we assume that $x\neq 1$ and we get $$a_1=\frac{b_2(x-2)+y+1-l}{x-1}.$$
Here are the parameters of the torus:
\begin{align*} 
    p = (1,x-1 ,-1), \quad q=(x-1, 0,0), \quad a = \left(\frac{b_2(x-2)+y+1-l}{x-1}, 1, \frac{(b_2-1)x-l}{x-1}\right), \quad b=(0,b_2,b_2-y).
\end{align*}

We note that $N_{11} = \gcd(x,y) = 1$. If $x=2$ and $y=l$, then we get $l_{Id}=-1=l_{(23)}$ and the action is free. Consequently, we assume that $x\neq 2$.\\ 

Now we compute the projections:

\begin{center}
\begin{tabular}{ c | c@{\hspace{5mm}}  c|c@{\hspace{6mm}}  c | c@{\hspace{5mm}}  c }
 \multicolumn{3}{c}{$L(Q_1, Q_2)$}& 
      \multicolumn{2}{c}{$L(Q_2, Q_3)$}&\multicolumn{2}{c}{$L(Q_3,Q_1)$}\\ \mytableextraspace

      \toprule

 $i\backslash X$ & $Q_i$  & $P_i$  & $Q_i$  & $P_i$& $Q_i$  & $P_i$\\  \addlinespace[0.2cm] 
\midrule
 $1$ &  $0$ & $l-y-1$ & $0$ & $2(y-l)$& $0$ & $l-y+1$\\ \addlinespace[0.2cm]

$2$ &  $0$ & $1-x$  &  $y-2l$ & $0$ & $2l-y$ & $x-1$\\ \addlinespace[0.2cm]

$3$ &  $2l-y$ & $l + x$ &  $y-2l$ & $-2l$& $0$ & $l-x$\\ \addlinespace[0.2cm]

\bottomrule
\end{tabular}

\vspace{.4cm}

\begin{tabular}{ c | c@{\hspace{5mm}}  c|c@{\hspace{6mm}}  c | c@{\hspace{5mm}}  c }
 \multicolumn{3}{c}{$L(P_1, P_2)$}& 
      \multicolumn{2}{c}{$L(P_2, P_3)$}&\multicolumn{2}{c}{$L(P_3,P_1)$}\\ \mytableextraspace

      \toprule

 $i\backslash X$ & $Q_i$  & $P_i$  & $Q_i$  & $P_i$& $Q_i$  & $P_i$\\  \addlinespace[0.2cm] 
\midrule
 $1$ &  $0$ & $x-2$ & $0$ & $2-2x$& $0$ & $x$\\ \addlinespace[0.2cm]
 
$2$ &  $l+x-y-2$ & $x-2$  &  $1-l-2x$ & $-x$ & $x+y+1$ & $2$\\ \addlinespace[0.2cm]

$3$ &  $y-l+x-2$ & $0$ &  $1+l-2x$ & $-x$& $x-y+1$ & $x$\\ \addlinespace[0.2cm]

\bottomrule
\end{tabular}
\end{center}

Since $xy=2l$ and $x\neq 1, 2$, we have $x=-1$ or $x=-2$. In cases related to  $L(Q_2, Q_3)$ and $L(P_1, P_2)$, we obviously have $0\in J_P$. Since $l=\pm 1$, in the remaining cases, except $L(P_2, P_3)$, simple computations show  that $0\in J_P$. For $L(P_2, P_3)$, we can easily see that both $1-l-2x$ and $1+l-2x$ are contained in $J_P$.\\

\noindent\textbf{Case (2)} Let $l_{(132)} = -l_{(12)}$. If $l_{(123)}= l_{(13)}$, applying $\sigma = (23)$ to $p,q,a, b$ results in the previous case. Therefore, $l \coloneqq l_{(123)} = -l_{(13)}$. We set $x = c+e$ and $y = b_2 -b_3$. By Equation~\eqref{eq:132-12}, we get $xy = 2$. It follows from this and  Equation~\eqref{eq:123-13} that $c+d = xl$. By Equations~\eqref{eq:132} and \eqref{eq:123},  respectively, we have 
\begin{align}
    -1 &= ca_2+x(a_1-b_2)\label{Eq:0 = c(a_2-la_3)_1}\\
    l&=c(a_2-2b_2+y) +b_2 l x +a_1(c-lx)\nonumber \\
    & = c(a_1+a_2-2b_2+y) +lx(b_2-a_1)\nonumber \\
    & = -ca_3+lx(b_2-a_1) \label{Eq:0 = c(a_2-la_3)_2}.
\end{align}
From Equations~\eqref{Eq:0 = c(a_2-la_3)_1} and \eqref{Eq:0 = c(a_2-la_3)_2}  we obtain  $ 0 = c(a_2-la_3)$. Consequently, we have two cases: $c= 0$ and  $a_2 = la_3$. In the latter case, if $l = 1$, then $d =e$ and $a_2 = a_3$. Therefore, we can apply Lemma \ref{L:b_2=b_3} to conclude that  the sphere is smooth.  Whence,  we may assume  $l = -1$. Moreover, from Equation~\eqref{Eq:0 = c(a_2-la_3)_1} and the fact that  $a_1=a_2+a_3+a_1=b_2+b_3=2b_2-y$,  we get $b_2=\frac{-ca_2+1}{x}$.   \\

The parameters of the torus then read as follows:  
\begin{align} \label{eq:case2:reduction_case_3}
    p = (c,-x-c ,x-c), \quad q=(-c, 0,0), \quad a = \left(\frac{-2ca_2}{x}, a_2, -a_2\right), \quad b=\left(0,\frac{-ca_2+1}{x},\frac{-ca_2-1}{x}\right).
\end{align}
Furthermore, we have
\begin{align}\label{actions:nonpos:1}
    l_{Id}=-2{c\over x}, \quad l_{(23)}=2{c\over x}, \quad
    N_{11} = \gcd\left(2,2{c\over x}\right).
\end{align}

Similar as before, in this case, we use Lemma~\ref{P:Same_Centroid} and Corollary~\ref{C:Sec>0_Geometric} to conclude that the Eschenburg orbifold given by the parameters as in \eqref{eq:case2:reduction_case_3} do not admit positive curvature. More precisely,  the vertex  $ \begin{pmatrix}
    -c\\0
\end{pmatrix}$ of the triangle $\Delta_Q$ is the midpoint of the edge  joining $\begin{pmatrix}
    -c-x\\a_2
\end{pmatrix}$  and $\begin{pmatrix}
    -c+x\\-a_2
\end{pmatrix}$ of the triangle $\Delta_P$. Therefore, such Eschenburg orbifolds do not admit positive curvature. 

Now let us assume that  $c = 0$. Then from Equation~\eqref{Eq:0 = c(a_2-la_3)_1}, we have $\vert x \vert = 1$ and $b_2 =a_1 +x$. Further, $y=2x$.  Consequently, we obtain the following parameters for the torus:
\begin{align*} 
    p = (0,lx ,x), \quad q=(x(l+1), 0,0), \quad a = (a_1, a_2, a_1-a_2), \quad b=(0,a_1+x,a_1-x).
\end{align*}
Furthermore, we have
\begin{align}\label{actions:nonpos:2}
    l_{Id} = x(a_1-(1+l)a_2)+1+l, \quad l_{(23)} = x(a_1-(l+1)a_2)-(1+l), \quad N_{11} = \gcd(2,a_1-(l+1)a_2).
\end{align}
If $l=1$ and  $a_1=2a_2$, then by Lemma~\ref{L:b_2=b_3},   the connecting sphere is  smooth.  If $l=-1$ and  $a_1x=\pm 1$, then a simple computations shows that we have  a free action.  Therefore, we assume  $(l, a_1)\neq (1, 2a_2)$ and  $(l, xa_1)\neq (-1, \pm 1)$.\\

Now, we compute the projections:

\begin{center}
\begin{tabular}{ c | c@{\hspace{5mm}}  c|c@{\hspace{6mm}}  c | c@{\hspace{5mm}}  c }
 \multicolumn{3}{c}{$L(Q_1, Q_2)$}& 
      \multicolumn{2}{c}{$L(Q_2, Q_3)$}&\multicolumn{2}{c}{$L(Q_3,Q_1)$}\\ \mytableextraspace

      \toprule

 $i\backslash X$ & $Q_i$  & $P_i$  & $Q_i$  & $P_i$& $Q_i$  & $P_i$\\  \addlinespace[0.2cm] 
\midrule
 $1$ &  $0$ & $l + 1$ & $0$ & $-2  \left(l+1\right)$& $0$ & $l+1$\\ \addlinespace[0.2cm]
 
$2$ &  $0$ & $-xl a_{2}+1+x(a_{1}-a_{2})$  &  $-2  \left(l+1\right)$ & $-2 $ & $2  \left(l+1\right)$ & $xl a_{2}+1+x(-a_{1}+a_{2})$\\ \addlinespace[0.2cm]

$3$ &  $2\left(l+1\right)$ & $xla_{2}+l+x(-a_{1}+a_{2}) $ &  $-2 \left(l+1\right)$ & $-2 l$& $0$ & $-(l+1)x a_{2}+l +a_{1}x$\\ \addlinespace[0.2cm]
\bottomrule
\end{tabular}

\vspace{.4cm}

\begin{tabular}{ c | c@{\hspace{5mm}}  c}
 \multicolumn{3}{c}{$L(P_1, P_2)$}\\ \mytableextraspace

      \toprule

 $i\backslash X$ & $Q_i$  & $P_i$ \\  \addlinespace[0.2cm] 
\midrule
 $1$ &  $0$ & $\left(l a_{2}-a_{1}+a_{2}\right) x$ \\ \addlinespace[0.2cm]

$2$ &  $(1+a_{2}x) l+x(-a_{1}+a_{2})$ & $\left(l a_{2}-a_{1}+a_{2}\right) x$  \\ \addlinespace[0.2cm]

$3$ &  $-x \left(l+1\right) \left(a_{1}-a_{2}\right)+l  \left(xa_{1}-1\right)$ & $0$ \\ \addlinespace[0.2cm]
\bottomrule
\end{tabular}
\vspace{.4cm}

\begin{tabular}{ c | c@{\hspace{5mm}} c@{\hspace{5mm}}|@{\hspace{5mm}} c@{\hspace{5mm}}  c  }
 \multicolumn{3}{c}{$L(P_2, P_3)$}& 
      \multicolumn{2}{c}{$L(P_1, P_3)$}\\ \mytableextraspace

      \toprule

 $i\backslash X$ & $Q_i$  & $P_i$  & $Q_i$  & $P_i$\\  \addlinespace[0.2cm]
\midrule
$1$ & $0$ & $-2x \left(l a_{2}-a_{1}+a_{2}\right) $& $0$ & $x\left(l a_{2}-a_{1}+a_{2}\right)$\\ \addlinespace[0.2cm]
 
$2$ &  $- \left(l-1\right)-2x(\left( l+1\right) a_{2}- a_{1}) $ & $-x\left(l a_{2}-a_{1}+a_{2}\right) $ & $ xl a_{2}-1+x(-a_{1}+a_{2})$ & $0$\\ \addlinespace[0.2cm]

$3$ &   $\left(l-1\right)+2x(\left(- l-1\right) a_{2}+ a_{1})$ & $-x\left(l a_{2}-a_{1}+a_{2}\right) $& $xl a_{2}+1+x(-a_{1}+a_{2})$ & $x\left(l a_{2}-a_{1}+a_{2}\right) $ \\ \addlinespace[0.2cm]
\bottomrule
\end{tabular}
\end{center}
\vspace{.4cm}
To proceed, we divide the argument into two cases: $l=-1$, $l=1$. First, let $l=-1$. Then in all cases except $L(P_2, P_3)$, it is obvious that $0\in J_P$. For  $L(P_2, P_3)$, if $a_1=0$, then $0\in J_p$. Let us assume that $\vert xa_1\vert \geq 2$. If $xa_1\geq 2$, we get $xa_1\leq -2+2xa_1\leq 2xa_1$. If $xa_1\leq -2$, then $2xa_1 \leq 2+2xa_1\leq xa_1$. 

Now let $l=1$.  In all cases, but $L(Q_1, Q_2)$, $L(Q_3, Q_1)$, we can easily see that the projection of at least one vertex of $\Delta_Q$ lies in the projection of $\Delta_P$. For $L(Q_1, Q_2)$, $L(Q_3, Q_1)$, recall that $a_1\neq 2a_2$. Therefore, $(1+x(a_1-2a_2)).(1-x(a_1-2a_2))=1-(a_1-2a_2)^2\leq 0$, i.e. $0\in J_P$. This concludes  the proof of Case (2) and ultimately the proof of Theorem~\ref{thm:main:sing}. 
\end{proof}
The following corollaries follow directly from the proof of Theorem~\ref{thm:main:sing}, Part~\ref{thm:main:sing:smoothsphere}.
\begin{corollary}
    \label{cor:degeneracy} Let $\EO$ have positive curvature  such that four of $\ast_\sigma$ are regular. Then either $P_i = P_j$ or $Q_i = Q_j$ for some $i\neq j$, i.e. there exists a cohomogeneity two action by $S^1\cdot \SU(2)$ on $\EO$. 
\end{corollary}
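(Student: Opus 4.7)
The plan is to read off the corollary from the case analysis in the proof of Theorem~\ref{thm:main:sing}~\eqref{thm:main:sing:smoothsphere}, recording the parameter degeneracies at each surviving branch. First, I would invoke Theorem~\ref{thm:main:sing}: the hypothesis that four $\ast_\sigma$ are regular leaves exactly two singular vertices, and by Parts~\eqref{thm:main:sing:same_Parity} and~\eqref{thm:main:sing:smoothsphere} these carry opposite parities and the sphere joining them is smooth. After the reparametrization of Remark~\ref{R:Change_of_Parameteres}, take $p=(c,d,e)$, $q=(c+d+e,0,0)$, $a=(a_1,a_2,a_3)$, $b=(0,b_2,b_3)$, so that $Q_2=(0,b_2)$ and $Q_3=(0,b_3)$.

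Next I would walk through every leaf of that proof's case analysis. The opening step $b_2=b_3$ immediately gives $Q_2=Q_3$. In Case~(1) (i.e.\ $l_{(132)}=l_{(12)}$), every sub-branch either contradicts positive curvature via Corollary~\ref{C:Sec>0_Geometric} or produces a free $\T$-action, which is excluded because there are singular vertices. In Case~(2), the relation $0=c(a_2-la_3)$ bifurcates: the sub-branch $a_2=la_3$ with $l=1$, combined with $c+d=xl=c+e$, forces $d=e$ and $a_2=a_3$, hence $P_2=P_3$; the sub-branch $c=0$ with $l=1$ and $a_1=2a_2$ forces $p_2=p_3$ and $a_2=a_3$, again yielding $P_2=P_3$. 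All remaining sub-branches of Case~(2) are ruled out as in Case~(1).

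Assembling these endpoints, positive curvature together with four regular vertices forces either $P_i=P_j$ or $Q_i=Q_j$ for some $i\neq j$. By the discussion preceding Remark~\ref{R:Change_of_Parameteres}, this equality is precisely the condition for the existence of an $\SU(2)\times T^2$ action on $\SU(3)$ commuting with the $\T$-action, which descends to a cohomogeneity two action of $S^1\cdot \SU(2)\subset \SU(3)$ on $\EO$.

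The principal difficulty is organizational rather than computational: one must verify that across every sub-branch (the signs of $l_{(132)}\pm l_{(12)}$ and $l_{(123)}\pm l_{(13)}$, then the internal splits within Case~(2)), the only configurations compatible with both positive curvature and the existence of singular vertices are those where two columns $(p_i,a_i)$ or two columns $(q_j,b_j)$ coincide. No new computation is required beyond inspecting the terminal reductions already carried out in Section~\ref{SS:Part_2}.
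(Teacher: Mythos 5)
Your overall strategy---reading the corollary off the terminal reductions in the proof of Theorem~\ref{thm:main:sing}~\eqref{thm:main:sing:smoothsphere}---is exactly what the paper intends, and your treatment of the opening step ($b_2=b_3$ forces $Q_2=Q_3$) and of Case~(2) is correct. However, your summary of Case~(1) contains a genuine error. You assert that in Case~(1) ``every sub-branch either contradicts positive curvature via Corollary~\ref{C:Sec>0_Geometric} or produces a free $\T$-action.'' This is false for the sub-branch $l_{(123)}=l_{(13)}=-1$: there the paper disposes of the branch by noting that ``either the action is free, or by a variant of Lemma~\ref{L:b_2=b_3} the connecting sphere is a smooth sphere,'' and the reason that variant applies is precisely that the parameters \eqref{eq:case2:reduction_case_2} with $l=-1$ read $p=(1,-1,-1)$, $a=(b_2+b_3-2,1,1)$, so that $P_2=P_3=(-1,1)$. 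This sub-branch is neither free nor incompatible with positive curvature in general: for instance $b=(0,2,0)$ gives $P_1=(1,0)$, $P_2=P_3=(-1,1)$, $Q_1=(-1,0)$, $Q_2=(0,2)$, $Q_3=(0,0)$, where $N_{\Id}=N_{(23)}=2$ (so exactly two singular vertices) and the edge $L(Q_1,Q_3)$, lying on the axis $\{a=0\}$, misses the degenerate triangle $\Delta_P$, so the orbifold is positively curved by Corollary~\ref{C:Sec>0_Geometric}. Thus Case~(1) \emph{does} contribute a surviving family, and the degeneracy there is $P_2=P_3$. As written, your argument leaves this branch unaccounted for; the corollary's conclusion still holds on it, but you must record it explicitly as one of the sources of the degeneracy, alongside the two sub-branches of Case~(2) that you did identify.

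A smaller imprecision: the hypothesis that four of the $\ast_\sigma$ are regular does not by itself ``leave exactly two singular vertices''---it also allows the free case. That case is harmless (the only two smooth members of the family, $W_1^6$ and $W_2^6$, both have two coincident $Q_i$'s and indeed admit cohomogeneity two actions), but since you repeatedly discard free sub-branches by appealing to the existence of singular vertices, you should either strengthen the hypothesis to ``exactly four'' or dispose of the free case separately.
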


\begin{corollary}\label{cor:nonnegact}
     Assume that the singular set of $\EO$ is contained in $\bb S^2_{ij}$ and $\EO$ does not have positive sectional curvature. Then $\EO$ is equivalent to one of the following cases:
     \begin{enumerate}
         \item For $b_2, b_3\in \Z$: 
         $$p = (1,-1,-1),  \,\, q = (-1,0,0),   \,\,a = (b_2 + b_3, 1,-1),  \, \, b = (0,b_2,b_3),$$
         $$l_{\Id} = 2+b_3-b_2, \, \, l_{(23)} = 2 +b_2 -b_3,   \,\, N_{11} = \gcd(2, b_2 - b_3).$$
         \item For $(x,y) = (-1, -2l), (-2,-l)$, $l = \pm 1$, $b_2 \in \Z$:
         $$p = (1, x-1,-1), \,\, q  = (x-1,0,0),\,\, a = \left( \frac{b_2(x-2)+y+1-l}{x-1} , 1, \frac{b_2(x-2)-l}{x-1}  \right), \,\, b = (0,b_2,b_2-y), $$
         $$l_{\Id} = 1+l-x-y, \,\, l_{(23)} = 1+l +y-x, \,\, N_{11} = 1.$$
         \item \label{cor:nonnegact:3} For  $0 \neq c, a_2 \in \Z$ and $x = \pm 1, \pm 2$:
        $$p = (c,-x-c,x-c),\,\,q = (-c, 0,0),\,\,a = \left(-\frac{2ca_2}{x}, a_2,-a_2\right),\,\,b=\left(0, \frac{-ca_2
         +1}{x}, \frac{-ca_2-1}{x}\right)$$
    $$  l_{Id}=-2{c\over x},\,\,l_{(23)}=2{c\over x},\,\,N_{11} = \gcd\left(2,2{c\over x}\right)$$
         \item For  $a_1,a_2 \in \Z$, $x = \pm 1$, $l = \pm 1$, $(l,a_1) \neq (1,2a_2)$, $(l, xa_1) \neq (-1, \pm 1)$: $$ p = (0,lx, x), \,q = (x(l+1),0,0),\,\,a = (a_1, a_2, a_1-a_2),\,\,b = (0,a_1 +x, a_1-x)$$
         $$l_{Id} = x(a_1-(1+l)a_2)+1+l,\,\,l_{(23)} = x(a_1-(l+1)a_2)-(1+l),\,\,N_{11} = \gcd(2,a_1-(l+1)a_2)$$
         
     \end{enumerate}
     Furthermore, $\EO$ has precisely one singular point with local group $\Z_k$ in the following cases:
    \begin{enumerate}
        \item[(1)] $b_2 - b_3 = \pm 1, \pm 3$, $ k = 3, 5$.
        \item[(2)] $l = -1$ and $(x,y) = (1,2), (-2,1)$, $k = 3$.
        \item[(4)] $l = 1$ and $a_1 = 2a_2\pm x$, with $x = \pm 1, \pm 3$ and $k = 3,5$.
    \end{enumerate}
\end{corollary}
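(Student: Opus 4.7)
The plan is to extract from the case analysis in the proof of Theorem~\ref{thm:main:sing}~\eqref{thm:main:sing:smoothsphere} precisely those parameter families for which positive curvature was ruled out directly by the midpoint argument (Lemma~\ref{P:Same_Centroid} together with Corollary~\ref{C:Sec>0_Geometric}), as opposed to the branches that were shown to give a free action or a smooth sphere via Lemma~\ref{L:b_2=b_3}. These four leftover families will be exactly items (1)--(4) of the corollary. After pinning them down, the one-singular-point claim is a short arithmetic check on the isotropy orders already listed.

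For the families themselves, I would read off: in Case (1) with $l_{(123)} = l_{(13)}$, equation~\eqref{eq:case2:reduction_case_2} with $l = 1$ (the $l = -1$ branch gave a free action or smoothness) yields family (1); in Case (1) with $l_{(123)} = -l_{(13)}$, the relation $xy = 2l$ together with the excluded $x = 1$ and $(x,y) = (2, l)$ forces $(x, y) \in \{(-1, -2l), (-2, -l)\}$, giving family (2); Case (2) with $c \neq 0$ yields $a_2 = l a_3$ with $l = -1$ (since $l = 1$ forces smoothness) and parameters~\eqref{eq:case2:reduction_case_3}, producing family (3); finally Case (2) with $c = 0$ forces $|x| = 1$, and after discarding the free/smooth sub-branches $(l, a_1) = (1, 2a_2)$ and $(l, xa_1) = (-1, \pm 1)$, delivers family (4). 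The non-positive curvature conclusion for each family is already contained in the proof via the observation that some vertex of $\Delta_Q$ sits at the midpoint of an edge of $\Delta_P$, so no separate verification is required.

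For the single-point enumeration at the bottom of the statement, I would use that $\Sigma \subseteq \bb S^2_{11}$ collapses to a single point with local group $\Z_k$ if and only if the interior isotropy $N_{11}$ is trivial and exactly one of $|l_{\Id}|, |l_{(23)}|$ equals $1$ while the other equals $k$. Applying this in each family: in family (1) with $m := b_2 - b_3$, the condition $\{|2 - m|, |2 + m|\} = \{1, k\}$ gives $m = \pm 1$ ($k = 3$) and $m = \pm 3$ ($k = 5$), and in both cases $N_{11} = 1$. In family (2) with $l = -1$, substituting into $l_{\Id} = -x-y$, $l_{(23)} = y-x$ singles out $(x, y) = (-1, 2), (-2, 1)$ yielding $k = 3$. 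In family (4) with $l = 1$, setting $u := x(a_1 - 2a_2)$ reduces the condition to $\{|u+2|, |u-2|\} = \{1, k\}$, giving $u \in \{\pm 1, \pm 3\}$, i.e.\ $a_1 = 2a_2 \pm x$ with the displacement $\pm x$ in $\{\pm 1, \pm 3\}$. Family (3) cannot produce a single singular point because $|l_{\Id}| = |l_{(23)}| = |2c/x|$ always, forcing both endpoints to have equal local groups.

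The main bookkeeping hurdle is verifying that the list of non-positively curved families is exhaustive: every sub-branch of the case split in Theorem~\ref{thm:main:sing}~\eqref{thm:main:sing:smoothsphere} must be firmly classified as free, smooth, or one of (1)--(4), taking the allowed equivalences of Proposition~\ref{Prop:equivalent} into account so that no further configuration slips through the net.
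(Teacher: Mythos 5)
Your reconstruction is exactly how the paper obtains this corollary: the paper gives no separate argument, stating only that the result ``follows directly from the proof of Theorem~\ref{thm:main:sing}, Part~\ref{thm:main:sing:smoothsphere}'', and your assignment of branches to families --- the $l=1$ branch of \eqref{eq:case2:reduction_case_2} giving (1), the $xy=2l$ branch with $x\neq 1,2$ forcing $(x,y)\in\{(-1,-2l),(-2,-l)\}$ giving (2), the $a_2=la_3$, $l=-1$ branch \eqref{eq:case2:reduction_case_3} giving (3), and the $c=0$, $\lvert x\rvert =1$ branch giving (4) --- matches the source, as does your criterion for a single singular point ($N_{11}=1$ together with exactly one of $\lvert l_{\Id}\rvert,\lvert l_{(23)}\rvert$ equal to $1$) and your exclusion of family (3) on the grounds that $\lvert l_{\Id}\rvert=\lvert l_{(23)}\rvert$ there.

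There is, however, one loose end in the single-point enumeration for family (2): you substitute only $l=-1$. Applying your own criterion to the remaining admissible datum $(l,x,y)=(1,-1,-2)$ gives $l_{\Id}=1+l-x-y=5$, $l_{(23)}=1+l+y-x=1$ and $N_{11}=1$, i.e.\ apparently a single singular point of order $5$ (realized whenever $a_1=\tfrac{b_2(x-2)+y+1-l}{x-1}=\tfrac{3b_2+2}{2}$ is an integer, e.g.\ $b_2=0$). You must either show this configuration is equivalent, under the operations of Proposition~\ref{Prop:equivalent}, to one already listed (e.g.\ family (1) with $b_2-b_3=\pm 3$), or rule it out; as written your enumeration silently skips it. The companion case $(l,x,y)=(1,-2,-1)$ gives $l_{\Id}=5$, $l_{(23)}=3$ and hence two singular points, so it genuinely does not contribute. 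Apart from this bookkeeping item --- precisely the exhaustiveness issue you flagged yourself --- the proposal is correct and is the paper's own derivation.
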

\begin{remark}
    Note that in case~\ref{cor:nonnegact:3}, the local groups are not cyclic in general, for example if $x = \pm1$ and $c,a$ are odd. In all other cases the local groups are cyclic.
\end{remark}

\begin{corollary}
\label{lem:actionssmooth} 
    Let $\EO$ be an effective Eschenburg orbifold such that the singular set is a smooth orbifold sphere $\bb S_{ij}^2$ with cyclic isotropy group $\bb Z_k$. Then the action is equivalent to one of the following cases
    
    \begin{enumerate}
        \item \label{lem:actionssmooth:1} For $s, u \in \bb Z$:
        $$p = (0,1,-1), \,\,q = 0, \,\, a = (u-1, s, u+1-s), \,\, b = (0,u,u),$$
         and $k=1-u$.
        \item  \label{lem:actionssmooth:2} For $m,r,c,e \in \bb Z$ such that $c | em+1$: 
\begin{align*}
    p &= (c, cr-e,e),\, \,q = (c(r+1),0,0),\\ a &= \left(mr, m + {em +1\over c}, m(r+1) - {em+1\over c}\right),\, \, b = (0, m(r+1), m(r+1)),
\end{align*}
and $k = r$.
        \item  \label{lem:actionssmooth:5} For $s, u \in \bb Z$:
$$p = (0,1,1), \,\,q = (2,0,0), \,\,a = (u+1,s, u-1-s), \,\, b = (0,u,u),$$
and  $k = u-2s-1$.
        \item  \label{lem:actionssmooth:6} For $r, c,e,m\in \Z$ such that  $c|1-em$:  \begin{align*}
            p &= (c, e-cr,e),\,\,q = (2e+c(1-r), 0,0),\\
        a &= \left(2{1-em\over c}+mr,{1-em \over c} -m, {1-em \over c}+m(r-1)\right),\\ b &=\left(0, 2{1 -em \over c} + m(r-1), 2{1-ek \over c} + m(r-1)\right), 
        \end{align*}
        and, $k= r$. 
        \item  \label{lem:actionssmooth:nonpos:1} For $a_2 \in \Z$: $$p = (1,-2,0),\,\,q = (-1,0,0),\,\,a = (-2a_2,a_2,-a_2),\,\,b=(0,-a_2+1,-a_2-1),$$
        and  $k= 2$. 
    \item \label{lem:actionssmooth:nonpos:2} For $a_2 \in \Z$, $\vert x \vert = 1$ and $\vert a_1\vert =2$:
    $$p= (0,-x,x),\,\,q = 0,\,\,a = (a_1,a_2,a_1-a_2),\,\,b=(0,a_1+x,a_1-x),$$
    and  $k = 2$.  
    \end{enumerate}    
    Furthermore, in cases \eqref{lem:actionssmooth:1} -- \eqref{lem:actionssmooth:6} $\EO$ admits positive sectional curvature in cases~\eqref{lem:actionssmooth:nonpos:1} and~\eqref{lem:actionssmooth:nonpos:2} it does not.
\end{corollary}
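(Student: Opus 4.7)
The plan is to combine Theorem~\ref{thm:main:sing}\eqref{thm:main:sing:smoothsphere}, Corollary~\ref{cor:degeneracy}, Corollary~\ref{cor:nonnegact}, and the change-of-parameter operations from Proposition~\ref{Prop:equivalent} to produce the classification, organizing the argument by whether $\EO$ admits positive sectional curvature or not.

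For the positive curvature cases \eqref{lem:actionssmooth:1}--\eqref{lem:actionssmooth:6}, by Corollary~\ref{cor:degeneracy}, either $P_i=P_j$ or $Q_i=Q_j$ for some $i\neq j$, equivalently $(p_i,a_i)=(p_j,a_j)$ or $(q_i,b_i)=(q_j,b_j)$. Applying Operation~\eqref{Prop:equivalent1} of Proposition~\ref{Prop:equivalent} if needed, we reduce to the case where two of the $q_i$ (together with the corresponding $b_i$) coincide. Then Operations~\eqref{Prop:equivalent3}, \eqref{Prop:equivalent4}, and \eqref{Prop:equivalent5} let us normalize to either $q=(q_1,0,0)$, $b=(0,u,u)$ or $q=(q_1,0,0)$, $b=(0,b_2,b_2)$, which is precisely the cohomogeneity-two setting. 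After this normalization, the parameters $p,a$ retain two integer degrees of freedom beyond the sphere index, and the formulas for the isotropy groups $\Gamma_{ij}$ and $\Gamma_\sigma$ recalled in Subsection~\ref{SS:singular set} reduce to simple divisibility conditions. The smoothness of the singular sphere $\bb S^2_{ij}$ means precisely that the isotropy groups of its two endpoint vertices coincide with the generic isotropy on its interior, while the cyclicity assumption forces $g_{ij}=1$. Imposing these two conditions on the normalized parameters, together with almost-freeness $N_\sigma\neq 0$ for all $\sigma$, isolates exactly four families of parameters, corresponding to cases~\eqref{lem:actionssmooth:1}--\eqref{lem:actionssmooth:6}; the split between \eqref{lem:actionssmooth:1}, \eqref{lem:actionssmooth:2} and \eqref{lem:actionssmooth:5}, \eqref{lem:actionssmooth:6} is governed by which edge $\bb S^2_{ij}$ of graph~\eqref{fig:sing:locus} contains the singular set (equivalently, which pair of vertices become regular after the degeneracy). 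The formulas for $k$ in each case read off from \eqref{Eq:order_isotropy}. Finally, we verify that each of these four families genuinely admits positive sectional curvature by checking Proposition~\ref{prop:pos_curv_condition} directly on the normalized triangles $\Delta_P,\Delta_Q$ (the collapse of $\Delta_Q$ to a segment makes this easy via Corollary~\ref{C:Degenerate_Triangle}).

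For the non-positive curvature cases \eqref{lem:actionssmooth:nonpos:1} and \eqref{lem:actionssmooth:nonpos:2}, we start directly from Corollary~\ref{cor:nonnegact}. Cases (1) and (2) of Corollary~\ref{cor:nonnegact} have $N_{11}\in\{1,\gcd(2,b_2-b_3)\}$, and the smoothness condition for $\bb S^2_{11}$ forces $l_{\Id}$ and $l_{(23)}$ to divide $N_{11}$, which a direct check shows is incompatible with the listed parameter ranges. What remains are Corollary~\ref{cor:nonnegact}\eqref{cor:nonnegact:3} and~(4): imposing smoothness of the sphere (i.e.\ $|l_{\Id}|=|l_{(23)}|=N_{11}$) together with cyclicity, and using \eqref{actions:nonpos:1}--\eqref{actions:nonpos:2}, forces $|c/x|=1$ in case \eqref{cor:nonnegact:3} and $|xa_1|=2$ (with $a_1\neq(l+1)a_2$ odd) in case~(4); these reduce exactly to \eqref{lem:actionssmooth:nonpos:1} and \eqref{lem:actionssmooth:nonpos:2}, respectively, and in both $k=2$.

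The main obstacle is the bookkeeping of symmetries: the operations of Proposition~\ref{Prop:equivalent} act transitively on many parameter choices, so the effort lies in choosing a single canonical representative for each equivalence class and verifying that the divisibility conditions imposed by smoothness plus cyclicity admit no further solutions. The argument is essentially a finite case analysis once the normalization to cohomogeneity-two form is in place; the positive curvature verification in cases \eqref{lem:actionssmooth:1}--\eqref{lem:actionssmooth:6} is then immediate from the degenerate triangle picture of Corollary~\ref{C:Degenerate_Triangle}.
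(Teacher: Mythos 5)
Your plan follows essentially the same route as the paper: Corollary~\ref{cor:degeneracy} reduces the positively curved case to the cohomogeneity-two normal form $q=(c+d+e,0,0)$, $b=(0,u,u)$, after which the regularity of the four vertices off the singular sphere pins down families (1)--(4) (with $k$ read off from the remaining isotropy order and positive curvature supplied by Corollary~\ref{C:Degenerate_Triangle}), while the non-positively curved case is extracted from Corollary~\ref{cor:nonnegact} exactly as you describe. The only imprecision is that cyclicity of $\Gamma_{ij}\cong\Z_{g_{ij}}\oplus\Z_{N_{ij}/g_{ij}}$ forces $\gcd(g_{ij},N_{ij}/g_{ij})=1$ rather than $g_{ij}=1$, but this does not affect the outcome since the families are already determined by the conditions $N_\sigma=\pm 1$ at the four off-sphere vertices.
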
 
\begin{remark}
    Note that $c \vert \, me+1$ implies, that $m$ and $c$ are relatively prime.
\end{remark}
\begin{proof}
    If $\EO$ has positive sectional curvature, then from Corollary~\ref{cor:degeneracy} we either have $q_2=q_3$, $b_2=b_3$, or $p_i=p_j$, $a_i=a_j$, $i\neq j$. Therefore, up to an equivalence, we can assume that 
       \begin{align*}
		p = (c,d ,e), \quad q=(c+d+e, 0,0) \quad a = (a_1, a_2, a_3), \quad b=(0,u,u).
	\end{align*}
 Moreover, the orders of the isotropy groups at $T_\sigma$ are given by 
 \begin{align}
    l_{Id} &= -(d+e)(a_2-u)-da_1 = l_{(23)}. \label{eq:smooth:1}\\
         l_{(132)} &= ca_2 + (c+e)(a_1 - u)= l_{(12)}.  \label{eq:smooth:2}\\
		l_{(123)} &= c(a_2-u) - d (a_1 - u)=l_{(13)}. \label{eq:smooth:3}
    \end{align}
    We have the following equations:
    \begin{align}
        l_{(132)} - l_{(123)} &= -(d+e)u+(c+d+e)a_1. \label{eq:smooth:-}\\
        l_{(132)} + l_{(123)} & = c(2a_2-u)+ (c+e-d)(a_1-u). \label{eq:smooth:+}
    \end{align}
    We first assume $l_{(132)}=l_{(123)}=1$: Let $c = 0$. If $d+e \neq 0$, we get from equation \eqref{eq:smooth:-}, the $a_1 = u$ and therefore, $l_{(132)} =0$, a contradiction. Thus we assume $e = -d$. From equation \eqref{eq:smooth:3}, we get $d \in \pm 1$ and $a_1 -u  = -d$. By multiplying all parameters by $d$, we can assume $d = 1$. Let $s = a_2$, then we have the parameters of case \eqref{lem:actionssmooth:1}.\\ Now we assume $c \neq 0$.
    By equation \eqref{eq:smooth:-}, we have that $c |(d+e)(a_1-u)$. By equation \eqref{eq:smooth:2} $c$ and $a_1-u$ are relatively prime. Therefore $c | d+e $ and $d = cr-e$ for some $r \in \bb Z$. From equation \eqref{eq:smooth:-}, we have 
    \begin{align*}
       0 = -cru + c(r+1)a_1
    \end{align*}
    Therefore $r|a_1$. Let $m \in \bb Z$, such that $a_1 = mr$, then $u = m(r+1)$. By equation \eqref{eq:smooth:2}, we get 
    \begin{align*}
        ca_2 = 1+(c+e)m
    \end{align*}
    Therefore $c|em+1$ and $a_2 = m+{em+1 \over c}$. We get the parameters of case \eqref{lem:actionssmooth:2}.\\
    Now Suppose $l_{(132)} = -l_{(123)}=1$. If $c = 0$ and $e\neq d$, we get $a_1 = u$ and therefore $l_{(132)}=0$, a contradiction. Hence we assume $e = d$. Again by equation \eqref{eq:smooth:3}, we have $d = \pm 1$ and $a_1 - u = d$. By multiplying all parameters with $d$, we can assume $d = 1$. Therefore $a_1 = u+1$. With $s = a_2$, we get the parameters of case \eqref{lem:actionssmooth:5}. If $c \neq 0$, then by equation \eqref{eq:smooth:+}, $c|(e-d)(a_1-u)$. Since by equation \eqref{eq:smooth:2}, $a_1-u$ and $c$ are relatively prime, we have $c\vert e-d$ and therefore there exists $r \in \bb Z$, such that $d = e-cr$. Let $m = a_1 -u$, then by equation \eqref{eq:smooth:2}, we have
    \begin{align*}
       1 = ca_2+(c+e)m
    \end{align*}
    Therefore $c| (1-em)$ and $a_2 = {1-em \over c} -m$. Furthermore by equation \eqref{eq:smooth:-}, we get $a_1 = mr +2{1-em \over c}$ and $u = 2{1-em \over c}+m(r-1)$, which are the parameters in case \eqref{lem:actionssmooth:6}. 
    \\
    
   Now assume that $\EO$ does not admit positive curvature. Then for $i\neq j$, we have  $P_i\neq P_j$ and $Q_i\neq Q_j$. In other words, none of the triangles are degenerate. 
It follows from the proof of Part~\eqref{thm:main:sing:smoothsphere}, particularly from equations~\eqref{actions:nonpos:1} and~\eqref{actions:nonpos:2}, that The only actions in which the remaining sphere is smooth are given by  the parameters in cases~\eqref{lem:actionssmooth:nonpos:1} and~\eqref{lem:actionssmooth:nonpos:2}.
\end{proof}

\section{Cohomology of $6$-dimensional Eschenburg Orbifolds}\label{sec:cohom}

This section is dedicated to the computation of the orbifold cohomology of an Eschenburg $6$-orbifold $\EO$. First, we recall the definition of orbifold cohomology and explain how to compute the cohomology of a biquotient using Eschenburg's method \cite{Esch92}. Following this approach, we then compute the cohomology ring  of $\EO$ and prove Theorem~\ref{thm:main:cohom}. Finally, we   compute the cohomology groups of  Eschenburg orbifolds appearing in Lemma~\ref{lem:actionssmooth} and prove Theorem~\ref{thm:main:cohomgroups}.

\subsection{Preliminaries on Orbifold Cohomology}\label{subsec:orbicohom} 

For a compact Lie group $G$ we denote by $BG$ its classifying space and by $EG$ a contractible space on which $G$ acts freely  such that $EG/G \cong BG$. We have a principal $G$-bundle
\begin{align*}
    G \to EG \to BG.
\end{align*}
If $G$ acts on a smooth manifold $M$, its equivariant cohomology  is given by the cohomology of the Borel construction $EG \times_G M$
\begin{align*}
    H^\ast_G (M) \coloneqq H^\ast(EG \times_G M). 
\end{align*}
For a quotient orbifold $\mathcal{O} = M/G$ we define its orbifold cohomology to be
\begin{align}\label{dfn:orbicohom}
    H^\ast_\mathcal{O}(\mathcal{O}) \coloneqq H^\ast_G(M)
\end{align}
\begin{remark}
    Any effective orbifold is a  quotient orbifold $\mathrm{Fr}\,\mathcal{O}/\O(n)$, where $\mathrm{Fr}\,\mathcal{O}$ denotes the frame bundle of $\mathcal{O}$. By \cite[Corollary 1.52.]{ALR07} the orbifold cohomology does not depend on the representation of $\mathcal{O}$ as a quotient orbifold and hence~\eqref{dfn:orbicohom} can be used as a definition for orbifold cohomology of effective orbifolds. As already mentioned in Remark~\ref{rem:orbifoldquotient}, the situation for ineffective orbifolds is different and needs the language of groupoids, since it is not known  whether any ineffective orbifold is a quotient. It is still possible to define orbifold cohomology in this case (see \cite[p. 38]{ALR07}), which is equivalent to \eqref{dfn:orbicohom} when restricting to quotient orbifolds \eqref{dfn:orbicohom} (see    \cite[Example~2.11]{ALR07}). Since in this work we only consider quotient orbifolds, we take \eqref{dfn:orbicohom} as a definition. For more details on this topic, we refer the reader to \cite{ALR07}.
\end{remark}

We will now describe Eschenburg's method for computing the cohomology of biquotients: Let $G$ be a compact Lie group. $G \times G$ acts on $G$ by $(g_1,g_2) \cdot g = g_1 g g_2^{-1}$. Let $U \subset G\times G$ be a closed subgroup and denote the quotient of $G$ by the subaction of $U$ as $G\sslash U$. If $U$ acts freely then $G\sslash U$ is a manifold, and if the action of $U$ is almost free then $G\sslash U$ is an orbifold. Such spaces are called \emph{(orbifold) biquotients}. In this sense the effective Eschenburg $6$-orbifolds $\EO$ are orbifold biquotients. It is convenient to allow $U \to G\times G$ to  just be an immersion. Then also the ineffective Eschenburg orbifolds are orbifold biquotients. Actually, Eschenburg's method works for  computing the equivariant cohomology of the $U$-action on $G$  even if $U \to G \times G$ is not even an immersion. \\

Let $\phi \colon U \to  G\times G$ be a smooth homomorphism of Lie groups. We get an induced  map of bundles
\begin{equation} \label{eq:bundles}
\begin{tikzcd}
	U \arrow[r] \arrow[d] & EU \arrow[r] \arrow[d, "E\phi"] & BU \arrow[d, "B\phi"] \\
	G\times G \arrow[r]   & EG\times EG \cong EG^2 \arrow[r]                    & BG\times BG \cong BG^2,                 
\end{tikzcd}
\end{equation}
where the map $U \to G\times G$ is homotopy equivalent to $\phi$ in the following way: Let $\rho \colon H \to L$ be a homomorphism between  Lie groups. Let $H$ act on $EH \times EL$ via the diagonal action
\begin{align*}
    h \ast (v,w) = (hv, \phi(h)w)
\end{align*}
Since $H$ acts freely on $EH$, this action is free and therefore  $EH\times_H EL \simeq BH$. Since $EL$ is contractible, projection to the second factor gives the following bundle map, such that the first vertical map is homotopy equivalent to $\rho$
\begin{equation*} 
\begin{tikzcd}
	H \arrow[r] \arrow[d] & EH \arrow[r] \arrow[d] & BH \arrow[d] \\
	L \arrow[r]   & EL \arrow[r]                    & BL.               
\end{tikzcd}
\end{equation*}
\noindent
In total 
\begin{align} \label{cohom:bundle}
    G \to EU \times_UG \to BU
\end{align}
is the pullback bundle of the reference bundle 
\begin{align}\label{cohom:refbundle}
    G \to EG^2 \times_{G^2}G\to BG^2.
\end{align}
Eschenburg's method to compute the Serre spectral  sequence of (\ref{cohom:bundle}) relies on the Serre spectral sequence of the reference bundle (\ref{cohom:refbundle}). We make the following assumptions on $G$: Let the coefficient ring $R$ be $\bb Z$ or a field\\
\begin{center}
    \begin{minipage}[c]{.5 \textwidth}
$H^\ast(G)$ is a free exterior algebra $\Lambda[a_1, \ldots a_m]$ with generators $a_1, \ldots a_m$ of degrees $\deg a_j = r_j -1$, where $r_j$ is even with $2 \le r_1 \le r_2 \le \ldots \le r_m$. \\
\end{minipage}
\end{center}

By \cite{B53}, we have $H^\ast (BG) = R[\bar a_1, \ldots, \bar a_m]$, with $\deg \bar a_j = r_j$. The generators $\bar a_i$ can be chosen to correspond to the $a_i$ under transgression in the spectral sequence of $EG \to BG$. Furthermore let $T \subset G$ be a maximal torus and $t_1,\ldots, t_k$ a basis of $H^1(T)$. Then $H^\ast(BT) = R [\bar t_1, \ldots, \bar t_k]$. If $W = N_G(T)/T$ is the Weyl group of $G$, then $H^\ast(BG) = H^\ast(BT)^W$ the subalgebra of Weyl group invariant polynomials (\cite{B53}, Prop. 27.1).  Now let $\rho \colon G \to G^\prime$ be a homomorphism of Lie groups with maximal tori $T$ and $T^\prime$ and $\rho_T = \rho\vert _T$. Denote by $\rho_{T\ast}$ its differential, then $\rho_T^\ast \colon H^1(T^\prime) \to H^1(T)$ is given by  the adjoint map of $\rho_{T\ast}$. Hence  $B\rho^\ast \colon H^\ast (BG^\prime) \to H^\ast(BG)$ is given by the extension of $\rho_T^\ast$ to the Weyl-invariant polynomials over $H^1(T^\prime)$ and $H^1(T)$.
\begin{example}
    We have 
    \begin{align*}
        H^\ast(\U(n)) = \Lambda[\alpha_1,\ldots, \alpha_n] \quad \text{ and } \quad H^\ast(B\U(n) = R[\sigma_1^n, \ldots, \sigma_n^n] \subset R[\bar t_1, \ldots, \bar t_n] = H^\ast (BT),
    \end{align*}
    where 
    \begin{align*}
        \sigma_1^n = \sum_j \bar t_j,\quad \sigma_2^n = \sum_{j<k} \bar t_j \bar t_k,\quad \ldots\quad,\quad \sigma_n^n = \bar t_1\dots \bar t_n.
    \end{align*}
\end{example}
For the Serre spectral sequence $E_r$ of the bundle (\ref{cohom:bundle}) let $k_r \colon H^\ast(BU) \to E^{\ast,0}$ be the projection onto the horizontal axis and define $\delta_i \coloneqq \bar a_i \otimes 1 - 1 \otimes \bar a_i \in H^\ast(BG^2) = H^\ast(BG) \otimes H^\ast(BG)$. We have the following theorem.
\begin{theorem}[\cite{Esch92}, Theorem 1] \label{thm:cohom:esch}
    Let $a_1, \ldots, a_j$ be the generators of $H^\ast(G)$. In the spectral sequence $E_r$ of the bundle (\ref{cohom:bundle})  all $1 \otimes a_j \in E_2^{0,\ast}$ are transgressive and 
    \begin{align*}
        d_{r_j}(1 \otimes a_j) = k_{r_j}(B\phi^\ast \delta_j).
    \end{align*}
\end{theorem}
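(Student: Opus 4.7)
My plan rests on naturality of the Serre spectral sequence. By construction, the bundle~\eqref{cohom:bundle} is the pullback of the reference bundle~\eqref{cohom:refbundle} along $B\phi\colon BU\to BG^2$, so the differentials on its $E_r$-page are the images under $B\phi^\ast$ of the differentials in the reference spectral sequence, modulo the indeterminacy recorded by the projection $k_{r_j}$. It therefore suffices to prove the theorem in the universal case $\phi=\mathrm{id}_{G^2}$: in the reference spectral sequence, the generators $1\otimes a_j$ are transgressive with $d_{r_j}(1\otimes a_j)=\delta_j$. Transgressivity itself follows inductively from Borel's theorem, using that $H^\ast(G)=\Lambda[a_1,\dots,a_m]$ is an exterior algebra on odd-degree generators and that $BG^2$ is simply connected, so that for degree reasons no earlier differential can touch $1\otimes a_j$.

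The computation of $d_{r_j}(1\otimes a_j)$ in the reference bundle is the heart of the matter. First I would identify the reference bundle explicitly: the biquotient action of $G^2$ on $G$ is transitive with isotropy $\Delta G$, so $G\cong G^2/\Delta G$, the total space is $EG^2/\Delta G\simeq B(\Delta G)\simeq BG$, and the projection becomes $B\Delta\colon BG\to BG^2$. To pin down the transgression I would exploit naturality with respect to the two factor inclusions $\iota_k\colon G\hookrightarrow G^2$ ($k=1,2$). Each pullback is equivalent to the universal principal bundle $G\to EG\to BG$, since the induced $\iota_k(G)$-action on $G$ is free left, respectively right, translation; its transgression is $\pm\bar a_j$ by the defining property of the $\bar a_j$, the sign $-1$ in the $\iota_2$-case coming from the antipode on $H^\ast(G)$. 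Since $B\iota_k^\ast$ kills the opposite tensor factor of $H^\ast(BG^2)=H^\ast(BG)\otimes H^\ast(BG)$, matching the two specializations forces the reference transgression to have the form $\bar a_j\otimes 1-1\otimes \bar a_j+R$, with $R$ a combination of ``mixed'' monomials involving both tensor factors nontrivially.

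The main obstacle is ruling out the mixed correction $R$. I would combine two complementary inputs. First, restricting the reference bundle along the diagonal $B\Delta$ yields the conjugation bundle $G\to EG\times_{G,\mathrm{conj}}G\to BG$, which is fibrewise homotopy trivial for connected $G$ and hence has vanishing transgression on $1\otimes a_j$; since $B\Delta^\ast$ is the cup-product map on $H^\ast(BG^2)$, this forces $R$ into its kernel. Second, naturality of the transgression with respect to the coproduct---i.e., the fact that the transgression of a primitive generator is primitive---constrains $R$ to lie in the primitive subspace of the polynomial Hopf algebra $H^\ast(BG^2)$, where in degree $r_j$ the only primitives are linear combinations of $\bar a_k\otimes 1$ and $1\otimes\bar a_k$. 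Together these force $R=0$, giving $d_{r_j}(1\otimes a_j)=\delta_j$ in the reference bundle; applying $B\phi^\ast$ and projecting via $k_{r_j}$ then yields the theorem in general. I expect the Hopf-algebraic step to require the most care, since the coalgebra structure on $H^\ast(BG^2)$ is not induced by a topological $H$-space structure and must be handled via the K\"unneth decomposition together with the Borel/Cartan description of $H^\ast(BG)$ as a polynomial Hopf algebra on primitive generators.
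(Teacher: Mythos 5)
A preliminary but important point: the paper does not prove this statement at all --- it is quoted verbatim from Eschenburg \cite{Esch92} (his Theorem~1) and used as a black box, so there is no in-paper proof to compare against. Judged on its own terms, your architecture is the standard and correct one: pull back from the reference bundle $G \to EG^2\times_{G^2}G\simeq BG \xrightarrow{B\Delta} BG\times BG$, identify its restrictions along the two factor inclusions with the universal bundle (with the sign $-1$ on the second factor), and use the diagonal restriction, i.e.\ the conjugation bundle, whose section $e\in G^{\mathrm{conj}}$ forces its transgression to vanish. But two of your steps have genuine gaps. First, transgressivity of $1\otimes a_j$ does \emph{not} follow ``for degree reasons'': a differential $d_s(1\otimes a_j)$ with $2\le s<r_j$ lands in $E_s^{s,r_j-s}$, a subquotient of $H^s(BG^2)\otimes H^{r_j-s}(G)$, and $H^{r_j-s}(G)$ is in general nonzero (it is spanned by products of the lower-degree generators $a_k$), so nothing vanishes on degree grounds. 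Borel's theorem gives transgressivity in $G\to EG\to BG$ using contractibility of the total space; the total space of the reference bundle is $BG$, not contractible, and transgressivity does not transfer from a pullback back to the original bundle, so a genuine argument (e.g.\ comparison along $\mu\colon G\times G\to G$, $(g_1,g_2)\mapsto g_1g_2^{-1}$, with the product of two universal bundles, together with an injectivity analysis of the induced map on the relevant $E_s$-terms) is required here.

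Second, the Hopf-algebraic step you yourself flag as delicate does not work as stated: $BG\times BG$ is not an H-space, so $H^*(BG^2)$ carries no topologically induced coproduct, and for the coproduct inherited from $H^*(BT)$ the generators $\bar a_j$ (e.g.\ the Chern classes) are \emph{not} primitive --- the primitives there are the power sums. The theorem that transgression preserves primitives applies to fibrations of H-spaces and is not available in this setting. Fortunately you do not need it: your conjugation-bundle observation already gives $\tau(a_j)\in\ker\bigl(\cup\colon H^*(BG)\otimes H^*(BG)\to H^*(BG)\bigr)$, and for a graded-commutative ring this kernel is precisely the ideal generated by the elements $x\otimes 1-1\otimes x$, hence by $\delta_1,\dots,\delta_m$. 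In degree $r_j$, modulo the images of the earlier differentials (inductively, the ideal generated by the $\delta_k$ with $r_k<r_j$, which is exactly what the projection $k_{r_j}$ quotients out), only a $\Z$-linear combination of the $\delta_k$ with $r_k=r_j$ survives, and the restriction to the first factor then forces this combination to be $\delta_j$. Replacing the primitivity argument by this purely algebraic fact, and supplying an actual transgressivity argument, would close the proof.
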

\begin{remark}
    Note that Eschenburg required the action of $U$ on $G$ to be free. Together with (\ref{eq:bundles}) and (\ref{cohom:bundle}), Theorem~\ref{thm:cohom:esch} still holds even if the action is neither free nor  effective. The freeness of the $U$-action on $G$, implies that $U\sslash G$ is a manifold and furthermore
    \begin{align*}
        EU\times_UG \simeq G\sslash U.
    \end{align*}
    Therefore, the singular cohomology ring of $G\sslash  
    U$ is recovered. Since we are interested in the orbifold cohomology as defined in (\ref{dfn:orbicohom}), we do not need this requirement.
\end{remark}

If not stated otherwise, from now on we assume the coefficient ring $R$ of any cohomology theory to be the integers $\bb Z$.

\subsection{Proof of Theorem~\ref{thm:main:cohom}}
    We cannot directly apply Theorem ~\ref{thm:cohom:esch} since $\T$ is not a subset of $\SU(3)^2$. Instead we define
    \begin{align*}
        \phi \colon T^3 \to T^3 \times T^3 \subset \U(3) \times \U(3), \\
        (z,w,u) \mapsto (z^p \cdot w^a, z^q \cdot w^b\cdot u^{(1,0,0)})
    \end{align*}
    where $z^v = \diag(z^{v_1},z^{v_2},z^{v_3})$ for $v \in \bb Z^3$.
    With these definitions
    \begin{align*}
        \EO \cong \U(3)\sslash T^3 \quad \text{ and } \quad E\T \times_{\T} \SU(3) \simeq ET^3 \times_{T^3} \U(3)
    \end{align*}
    Therefore we can apply Theorem \ref{thm:cohom:esch} to $G = \U(3)$ and $\phi \colon T^3 \to \U(3)\times \U(3)$ to compute the Serre spectral sequence of the bundle 
    \begin{align*}
        \U(3) \to ET^3 \times_{T^3} \U(3) \to BT^3
    \end{align*}
   We start with determining $B\phi^\ast$. It is easy to see that $\phi_\ast \colon H^1(T^3) \to H^1(T^3 \times T^3) = H^1(T^6)$ is given by the matrix:
   \begin{align*}
       \phi_\ast = \begin{pmatrix}
           p_1& a_1&0 \\
           p_2&a_2&0\\
           p_3&a_3&0\\
           q_1&b_1&1\\
           q_2&b_2&0\\
           q_3&b_3&0
       \end{pmatrix} \text{ and therefore } \phi^\ast = \begin{pmatrix}
           p_1 & p_2 &p_3 &q_1 &q_2 &q_3\\
           a_1& a_2 &a_3 &b_1&b_2 &b_3\\
           0&0&0&1&0&0
       \end{pmatrix}
   \end{align*}
    By the discussions in the previous subsection $B\phi^\ast \colon H^2(BT^6) \to H^2(BT^3)$ is given by the same matrix as $\phi^\ast$. For this let $s_1,s_2,s_3,t_1, t_2, t_3$ be the generators of $H^\ast (BT^6) = \bb Z[s_1,s_2,s_3,t_1, t_2, t_3]$ and $s,t,r$ the generators of $H^\ast(BT^3) = \bb Z [s,t,r]$, then:
    \begin{align*}
        B\phi^\ast(s_i) &= p_i s + a_i t\\
        B\phi^\ast(t_1) &= q_1 s + b_1 t +r\\
        B\phi^\ast(t_i) &= q_i s + b_i t, \quad i = 2,3
    \end{align*}
    And therefore:
    \begin{align*}
        B\phi^\ast(\delta_i) &= B\phi^\ast(\sigma_i \otimes1-1 \otimes\sigma_i) \\
        B \phi^{\ast}(\delta_{1})&=-r,\\
        B \phi^{\ast}(\delta_{2})&=\sigma_2(sp+ta)-\sigma_2(sp+ta)-A_1r\\
B \phi^{\ast}(\delta_{3})&=\sigma_3(sp+ta)-\sigma_3(sp+ta)-A_2r\\
    \end{align*}
    For two homogeneous polynomials $A_i\in \bb Z\lbrack t,s\rbrack$ of degree $2i$. Now we compute the Serre spectral sequence of the bundle \ref{cohom:bundle}:\\
    Let $a_{1}, a _{3}, a_{5}$ be the generators of $\H^{\ast}(\UU(3), \Z)$ and $k_r \colon H^\ast (B\UU(3)) \to E_r^{\ast,0}$. By \ref{thm:cohom:esch} we know that
\begin{align}
    d_{2i}(1 \otimes a_{2i-1}) &= k_{2i}(\CB \phi^\ast \delta_{2i})\\
    d_2(1\otimes a_1)&= -k_2(r)\label{rel:1}\\
    d_4(1\otimes a_3)&= k_4(\sigma_2(sp+ta)-\sigma_2(sp+ta))-k_4(A_1(t,s)r)\label{rel:2}\\
    d_6(1 \otimes a_5) &= k_6(\sigma_3(sp+ta)-\sigma_3(sp+ta))-k_6(A_2(t,s)r) \label{rel:3}
\end{align}
Since the odd degree cohomology of $BT^3$ vanishes, the differentials in odd degrees also vanish. By relation (\ref{rel:1}), we have $E_4^{\ast,0} \cong \bb Z\lbrack s,t\rbrack$. Since $d_2$ is injective, it has no kernel and therefore $E_4^{0,1} = 0$. By the same reasoning and relation (\ref{rel:2}) we have $E_6^{\ast,0} \cong \bb Z\lbrack s,t\rbrack/\langle \sigma_2(sp+ta)-\sigma_2(sp+ta)\rangle $ and $E_6^{0,3} = 0$. Again by relation (\ref{rel:3}) we have $E_7^{\ast,0} \cong \bb Z\lbrack s,t\rbrack/\langle \sigma_2(sp+ta)-\sigma_2(sp+ta), \sigma_3(sp+ta)-\sigma_3(sp+ta)\rangle $. We will now show, that also $E_7^{0,5} = 0$, which is equivalent to $d_6$ being injective on $E_6^{0,5}$. Suppose $d_6$ has a non trivial kernel, then $E_7^{5,0} \cong \bb Z$. Since all other groups on the diagonal between $E_7^{5,0}$ and $E_7^{0,5}$ vanish and $E_7^{5-p,p} = E_\infty^{5-p,p}$, we must have $H^5 (ET^3\times_{T^3}\UU(3)) \cong \bb Z$. Since $\UU(3) \sslash T^3$ is an orbifold, its cohomology ring with rational  coefficients satisfies Poincaré-duality (cf. \cite[Proposition 1.28]{ALR07}). Therefore $\bb Q \cong H^5 (ET^3\times_{T^3}\UU(3),\Q) \cong H_1(ET^3\times_{T^3}\UU(3),\Q)$. Since the odd degree cohomology of $H^\ast (BT^3)$ vanishes, we have $H^1(ET^3\times_{T^3}\UU(3),\Q) = 0$ and the first rational betti number vanishes, a contradiction. Therefore $d_6$ is also injective and the Serre spectral sequence collapses on page $7$. Furthermore all entries are trivial except for $E_\infty^{\ast,0} = \bb Z \lbrack s,t\rbrack/\langle \sigma_2(sp+ta)-\sigma_2(sp+ta), \sigma_3(sp+ta)-\sigma_3(sp+ta)\rangle $, which is therefore isomorphic to $H^\ast(ET^3\times_{T^3}\UU(3)) = H_{\OO}(\EO)$.\hfill \qed
\begin{remark}
    Although the relations in the cohomology ring seem quite simple, it is in general hard to determine the cohomology groups of the Eschenburg orbifolds for entire subfamilies. In particular, in a given even degree $2m$, the relations define an $(m+1)\times (m+1)$-matrix $A =A_{p,q}^{a,b}$ with $\bb Z$-coefficents. The cohomology group in degree $2m$, is isomorphic to 
    \begin{align*}
        \bb Z^{m+1} / \mathrm{Im}\, A \cong \bigoplus_{i =1}^k\bb Z \oplus \bigoplus_{j = 1}^{l} \bb Z_{r_j}
    \end{align*}
    In order to find this isomorphism, it is necessary to bring $A$ into a diagonal form, its Smith normal form.  Given specific parameters, $p,q,a,b$ it is in general easy for a computer program to execute this task. Unfortunately, it turns out to  be a very hard task, to do this for entire families. We were able to compute the cohomology groups in all degrees in the special cases, where the singular set is an edge of the graph in Figure \eqref{fig:sing:locus}. Since these cases have special behaviour in positive curvature, it is interesting to study their cohomology rings.
\end{remark}
\subsection{Proof of Theorem~\ref{thm:main:cohomgroups}}

\begin{proposition}
\label{prop:isom}
    Let $M$ be a closed   connected manifold of dimension $n$ and $G$ be a compact  Lie group acting on $M$. Let $F \subset M$ be a closed $G$-invariant submanifold of  codimension $k$  containing the set of points with nontrivial isotropy groups. Assume further that the normal bundle of $F$ is orientable and   $H^{n - \dim G -1}((M \backslash F)/G) = 0$. Then there exist $u \in H^k_G(\mathcal{V} F,\mathcal{V} F\backslash F)$, such that 
 \begin{align*}
  H^{i}_G(F) \xrightarrow{\cup u} H^{i+k}_G(\mathcal{V} F,\mathcal{V} F\backslash F) \xrightarrow{i^\ast (j^\ast)^{-1} } H^{i+k}_G(M)
 \end{align*}
 is an isomorphism for $i\ge n-\dim G-k $ and surjective for $i = n - \dim G-k -1$. Here $ i \colon (M ,\emptyset) \to (M, M \backslash F)$ is the inclusion and $j \colon   (\mathcal{V} F, \mathcal{V} F \backslash F) \to (\mathcal{T} F, \mathcal{T} F\backslash F) \to (M, M\backslash F)$. The first arrow is an isomorphism and the second one the inclusion of a tubular neighbourhood $\mathcal T F$ in $M$. 
\end{proposition}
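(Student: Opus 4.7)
The plan is to combine the equivariant Thom isomorphism with the long exact sequence of the equivariant pair $(M,M\setminus F)$, after excising to a tubular neighbourhood of~$F$.

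First I would take $u\in H^k_G(\mathcal{V} F,\mathcal{V} F\setminus F)$ to be the $G$-equivariant Thom class of the normal bundle $\mathcal{V} F\to F$. Its existence follows from the orientability hypothesis together with $G$ preserving the orientation, which is automatic when $G$ is connected (the situation of interest; more generally it is a mild equivariance assumption). The equivariant Thom isomorphism then gives $\cup u\colon H^i_G(F)\xrightarrow{\sim} H^{i+k}_G(\mathcal{V} F,\mathcal{V} F\setminus F)$ for every $i$, so the first arrow of the composition in the statement is already an isomorphism in all degrees.

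Next, the map $j^\ast$ is the composition of the tubular-neighbourhood identification $H^\ast_G(\mathcal{T} F,\mathcal{T} F\setminus F)\cong H^\ast_G(\mathcal{V} F,\mathcal{V} F\setminus F)$ with the $G$-equivariant excision $H^\ast_G(M,M\setminus F)\cong H^\ast_G(\mathcal{T} F,\mathcal{T} F\setminus F)$; both are isomorphisms, so the second arrow of the statement is naturally identified with the map $H^{i+k}_G(M,M\setminus F)\to H^{i+k}_G(M)$ appearing in the long exact sequence
\begin{equation*}
\cdots\to H^{i+k-1}_G(M\setminus F)\to H^{i+k}_G(M,M\setminus F)\to H^{i+k}_G(M)\to H^{i+k}_G(M\setminus F)\to\cdots
\end{equation*}
of the equivariant pair $(M,M\setminus F)$. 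The whole problem therefore reduces to knowing when the outer two equivariant cohomology groups of $M\setminus F$ in this sequence vanish.

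To finish I would exploit that, since $F$ contains every point of $M$ with nontrivial isotropy, $G$ acts freely on $M\setminus F$, so $H^\ast_G(M\setminus F)\cong H^\ast((M\setminus F)/G)$. Writing $N\coloneqq n-\dim G$, the quotient $(M\setminus F)/G$ is a non-compact $N$-manifold (non-compactness coming from $M$ being compact and $F$ being nonempty) with no compact connected components once $k\ge 2$. Its cohomology therefore vanishes in all degrees $\ge N$ by dimension and non-compactness, while $H^{N-1}((M\setminus F)/G)=0$ by hypothesis. Substituting into the long exact sequence forces the natural map to be an isomorphism whenever $i+k\ge N$, i.e.\ $i\ge N-k$, and surjective for $i+k-1=N-1$, i.e.\ $i=N-k-1$, which is exactly the claim. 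The only point needing a little care is the vanishing $H^N((M\setminus F)/G)=0$, i.e.\ ruling out compact components of the free-locus quotient; this is the main (but rather mild) obstacle, and it is automatic in the codimension-$\ge 2$ situations in which the proposition will be applied.
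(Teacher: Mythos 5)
Your proposal is correct and follows essentially the same route as the paper: the equivariant Thom class from the orientable normal bundle, excision plus the tubular neighbourhood to identify $H^\ast_G(\mathcal{V} F,\mathcal{V} F\setminus F)$ with $H^\ast_G(M,M\setminus F)$, and then the long exact sequence of the pair combined with freeness of the action on $M\setminus F$ and the vanishing of $H^r((M\setminus F)/G)$ for $r\ge n-\dim G$. Your extra care about compact components of $(M\setminus F)/G$ is reasonable but not actually needed: since $M$ is connected and $F$ is a nonempty proper closed subset, a compact component of $M\setminus F$ would be clopen in $M$, which is impossible, and properness of the quotient map then rules out compact components downstairs for any codimension $k$.
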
 
\begin{proof}
    Since the normal bundle of $F$ is orientable, the same holds for the Borel-construction $EG \times _G \mathcal{V} F$. Therefore there exists $u \in H^k(EG \times_G\mathcal{V} F,EG \times_G (\mathcal{V} F\backslash F)) = H^k_G(\mathcal{V} F,\mathcal{V} F\backslash F)$ such that 
    \begin{align*}
        \cup u \colon H^i_G(F) = H^i(EG \times_G F) \to  H^{i+k}(EG \times_G\mathcal{V} F,EG \times_G (\mathcal{V} F\backslash F)) = H^{i+k}_G(\mathcal{V} F,\mathcal{V} F\backslash F)
    \end{align*}
  is an isomorphism, given by the Thom isomorphism.
  Moreover, 
  by excision we have $H^\ast_G(M,M\backslash F) \cong H_G^\ast(\mathcal{T} F, \mathcal{T} F\backslash F) \cong H_G^\ast(\mathcal{V} F, \mathcal{V} F\backslash F)$. 
  Furthermore,  there is a long exact sequence of equivariant cohomology groups for the pair $(M, M\backslash F)$:
    \begin{align*}
        \ldots \to H_G^{r-1}(M\backslash F) \xrightarrow{\partial} H^r_G(M,M\backslash F) \xrightarrow{i^\ast} H^r_G(M) \xrightarrow{} H^{r}_G(M\backslash F) \to \ldots
    \end{align*}
    Since $G$ acts freely on $M \backslash F$ we have $EG \times_G (M \backslash F) \simeq (M \backslash F)/G$. Furthermore, $ (M \backslash F)/G$ is a  manifold of dimension $n - \dim G$ without compact components. 
    Therefore $H^r_G(M\backslash F) \cong H^r((M \backslash F)/G)$ is trivial for $i \ge n - \dim G$.  
    By assumption we also have $H^{n- \dim G -1}_G(M\backslash F) \cong H^{n-\dim G -1}((M \backslash F)/G) = 0$ and it follows from the long exact sequence, that $i^\ast$ is an isomorphism for $i \ge n- \dim G -k$ and surjective for $i = n -\dim G - k -1$. Now the result follows immediately. 
\end{proof}

\begin{remark}[Cohomology of teardrops.] \label{rem:singpoint}
    Let $\bb S^2 \cong \U(2)\sslash T^2$ be an orbifold $2$-sphere by a biquotient action of $T^2$ on $\U(2)$ with only a single singular point with finite orbifold group $H$. If we pick $M = \U(2)$, $F = \U(2)^H\cong T^2$ and $G = T^2$ then $(M\backslash F)/G = \bb S^2 \backslash\lbrace \ast \rbrace \simeq \lbrace \ast \rbrace$ and therefore $H^{n -\dim G-1}((M\backslash F)/G ) = H^1((M\backslash F)/G ) = 0$. Hence we can apply Proposition \ref{prop:isom} and get
    \begin{align*}
        H_\OO^i(\bb S^2) \begin{cases}
            0 & \text{for } i \text{ odd}\\
            \Z & \text{for } i=0,2\\
            H &\text{for } i \ge 4, \text{ even}
        \end{cases}
    \end{align*}
\end{remark}

\begin{theorem}\label{thm:cohomology:singsphere}
\label{thm:cohomology}
  Let $\EO$ be and effective Eschenburg orbifold and   $\Sigma = \bb S^2_{ij}$  contains the singular set of $\EO$.   Then we have 
  \begin{align*}
                H^i_{\mathcal{O}}(\EO) = \begin{cases}
               0 & \text{for } i \text{ odd}\\
                   \bb Z & \text{for } i = 0\\
                    \bb Z^2 &\text{for } i = 2,4\\
               H^{i-4}_\mathcal{O}(\bb S^2) & \text{for } i \ge 6
          \end{cases}
     \end{align*}   
\end{theorem}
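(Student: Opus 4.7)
The plan is to apply Proposition~\ref{prop:isom} with $M = \SU(3)$, $G = \T$ and $F = \U(2)_{ij}$, the preimage of $\Sigma = \bb S^2_{ij}$ under the quotient $\SU(3) \to \EO$. Our hypothesis that $\Sigma$ contains the singular set guarantees that $F$ is a closed, $\T$-invariant submanifold containing every point of $\SU(3)$ with non-trivial isotropy. The codimension is $k = \dim \SU(3) - \dim \U(2) = 4$, and with $\dim \T = 2$ one has $n - \dim G - k = 2$. The normal bundle of $F$ in $M$ is orientable, since its rank-four fibres inherit a complex structure from the embedding~\eqref{Eq:U(2)_ij}.

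\textbf{The vanishing hypothesis.} The technical core is to verify $H^5((\SU(3) \setminus F)/\T) = 0$. Since $\T$ acts freely on $\SU(3) \setminus F$, this coincides with $H^5(\EO \setminus \Sigma)$, the ordinary cohomology of a smooth open six-manifold. I will derive it from the equivariant long exact sequence of the pair $(\SU(3), \SU(3) \setminus F)$ combined with the Thom isomorphism $H^i_\T(\SU(3), \SU(3)\setminus F) \cong H^{i-4}_\OO(\Sigma)$:
\[
\cdots \to H^{i-4}_\OO(\Sigma) \to H^i_\OO(\EO) \to H^i(\EO \setminus \Sigma) \to H^{i-3}_\OO(\Sigma) \to \cdots.
\]
By Theorem~\ref{thm:main:cohom} the ring $H^*_\OO(\EO) = \Z[s, t]/\langle \rho_2, \rho_3\rangle$, with $\rho_j = \sigma_j(sp + ta) - \sigma_j(sq + tb)$ of degree $2j$, is concentrated in even degrees; in particular $H^5_\OO(\EO) = 0$. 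Together with $H^1_\OO(\Sigma) = H^3_\OO(\Sigma) = 0$ (orbifold two-sphere) and $H^6(\EO \setminus \Sigma) = 0$ (open six-manifold), the sequence in degrees $5$ and $6$ collapses to
\[
0 \to H^5(\EO \setminus \Sigma) \to H^2_\OO(\Sigma) \to H^6_\OO(\EO) \to 0,
\]
with $H^2_\OO(\Sigma) = \Z$. Rational Poincaré duality for the closed orbifold $\EO$ forces $\operatorname{rank}_\Z H^6_\OO(\EO) = 1$, so this surjection from $\Z$ onto a rank-one group must be an isomorphism, whence $H^5(\EO \setminus \Sigma) = 0$.

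\textbf{Assembling the cohomology.} Once the hypothesis is secured, Proposition~\ref{prop:isom} yields a map $H^i_\OO(\Sigma) \to H^{i+4}_\OO(\EO)$ that is an isomorphism for $i \geq 2$ and a surjection for $i = 1$. This immediately gives $H^j_\OO(\EO) \cong H^{j-4}_\OO(\bb S^2)$ for $j \geq 6$ and $H^5_\OO(\EO) = 0$. The low-degree groups are read off Theorem~\ref{thm:main:cohom}: odd-degree cohomology vanishes because the presentation involves no odd-degree generators or relations; $H^0 = \Z$ and $H^2 = \Z\{s, t\} \cong \Z^2$ are immediate; and $H^4 = \Z\{s^2, st, t^2\}/\langle \rho_2\rangle$. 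To identify $H^4$ with $\Z^2$, I will verify that the three coefficients of $\rho_2$ are coprime, a primitivity statement for the explicit polynomial $\rho_2$ forced by the hypothesis that the singular set lies in a single $\bb S^2_{ij}$ (via the coprimality constraints this imposes on the isotropy integers $N_\sigma$). The hardest step throughout is the vanishing $H^5(\EO \setminus \Sigma) = 0$; once that is in hand, Proposition~\ref{prop:isom} and Theorem~\ref{thm:main:cohom} together complete the computation.
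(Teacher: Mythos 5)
Your overall strategy coincides with the paper's: apply Proposition~\ref{prop:isom} to $(M,F,G)=(\SU(3),\U(2)_{ij},\T)$, note $k=4$ and $n-\dim G-k=2$, and combine the resulting isomorphisms with Theorem~\ref{thm:main:cohom} for the low degrees. However, your verification of the one non-obvious hypothesis, $H^{5}\bigl((\SU(3)\setminus \U(2)_{ij})/\T\bigr)=0$, has a genuine gap. Your short exact sequence
\begin{align*}
0 \to H^5(\EO\setminus\Sigma) \to H^2_\OO(\Sigma) \to H^6_\OO(\EO) \to 0
\end{align*}
is correct, but the input $H^2_\OO(\Sigma)=\Z$ is not: here $H^2_\OO(\Sigma)=H^2_{\T}(\U(2)_{ij})$, and by the paper's own computation (Lemma~\ref{lem:cohom_smooth_sphere}, and more generally the presentation $\Z[s,t]/\langle\tilde\sigma_1,\tilde\sigma_2\rangle$) this group is $\Z\oplus\Z_k$ with $k$ the order of the generic local group on $\bb S^2_{ij}$ --- it is torsion-free only when the sphere is non-singular. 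With torsion present, the rank argument via rational Poincar\'e duality only shows that $H^5(\EO\setminus\Sigma)$ is \emph{finite}; to conclude it vanishes you would need to know that the torsion of $H^6_\OO(\EO)$ has order exactly $k$, which is precisely (part of) the statement you are trying to prove, so the argument is circular. A secondary, smaller issue: your identification $H^4_\OO(\EO)\cong\Z^2$ rests on the primitivity of $\rho_2$, which you announce but do not prove; it is true, but not obviously so from the hypothesis on the singular set.

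The paper closes the gap geometrically rather than homologically: using the bundle $\rho\colon\SU(3)\to\SU(3)/\U(2)\cong\bb{CP}^2$ with $\rho(\U(2)_{ij})=\{\ast\}$, it shows that $\SU(3)\setminus\U(2)_{ij}$ deformation retracts $\T$-equivariantly onto the compact $6$-manifold $\rho^{-1}(\bb{CP}^1)$, on which $\T$ acts freely; hence $(\SU(3)\setminus\U(2)_{ij})/\T$ is homotopy equivalent to a closed $4$-manifold and all its cohomology in degrees $\ge 5$ vanishes. (The same fact, that the complement is homotopy equivalent to a closed $4$-manifold, is then also what forces $r=1$ in $H^4_\OO(\EO)\cong\Z^2\oplus\Z_r$ via the exact sequence of the pair, replacing your primitivity claim.) You would need to supply an argument of this kind --- some genuinely new geometric or topological input about the complement --- rather than deducing the vanishing from the very cohomology groups the theorem computes.
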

\begin{remark}
    The results of Theorem \ref{thm:cohomology} in degrees larger than $6$ also follow from a localization argument (see \cite{tD87}). However, to get the conclusion for the sixth cohomology  group  we need Proposition \ref{prop:isom}.
\end{remark}
\begin{proof}
    Our goal is  to apply Proposition~\ref{prop:isom} to the triple $(M, F, G)=(\SU(3), \U(2)_{ij}, T^2)$ to prove the theorem as $ H^i_{\mathcal{O}}(\EO)= H^i_{T^2}(\SU(3))$.  To apply Proposition~\ref{prop:isom}, observe that all conditions except for $H^{n - \dim G -1}((M \setminus F)/G) = 0$ are clearly satisfied. It remains to show that this condition is also satisfied, i.e. $H^5((\SU(3)\setminus \U(2)_{ij})/T^2) = 0$. In what follows we first demonstrate that $H^5((\SU(3)\setminus \U(2)_{ij})/T^2) = 0$ and then we conclude the proof of the  theorem. Define
 \begin{align*}
     \rho\colon \SU(3)&\to \SU(3)/\U(2)\\
     A&\mapsto \lbrack\tau_i A\tau_j\rbrack,
 \end{align*}
 where $\tau_i$ is defined right after \eqref{Eq:U(2)_ij}. Note that  $\rho$ is a $\U(2)$-fiber bundle over $ \bb C \bb P^2\cong \SU(3)/\U(2)$ with $\rho(\U(2)_{ij})=\lbrack I \rbrack=\colon \ast $. Therefore, we get the following fibration
 \begin{align*}
            \U(2) \xrightarrow{} \SU(3) \setminus \U(2)_{ij} \xrightarrow{\pi} \bb C \bb P^2 \setminus \lbrace \ast \rbrace. 
        \end{align*}
        
Since $\bb C \bb P^1$ is a deformation retract of $\bb C \bb P^2 \setminus \lbrace \ast \rbrace $ and the inclusion map $\bb C \bb P^1\hookrightarrow \bb C \bb P^2 \setminus \lbrace \ast \rbrace $  is a  cofibration 
with $\bb C \bb P^1$ a closed subset, it follows from \cite[Theorem 12]{S68}  that $\rho^{-1}(\bb C \bb P^1) \hookrightarrow \SU(3) \setminus \U(2)_{ij}$ is a closed cofibration as well. From this and \cite[Theorem~4]{S66}, we deduce that $\rho^{-1}(\bb C \bb P^1) $ is a deformation retract of $\SU(3) \setminus \U(2)_{ij}$. Note that $\rho^{-1}(\bb C \bb P^1)$ is a compact $6$-dimensional submanifold of $\SU(3) \setminus \U(2)_{ij}$. 
Furthermore, $\rho^{-1}(\bb C \bb P^1)$ is invariant under the $T^2$-action and hence  $\rho^{-1}(\bb C \bb P^1) \hookrightarrow \SU(3) \setminus \U(2)_{ij}$ is a $T^2$-equivariant map. This implies that the induced map between the Borel constructions $ET \times_T \rho^{-1}(\bb C \bb P^1)$  and $ET\times_T(\SU(3) \setminus \U(2)_{ij})$ is a homotopy equivalence (see for example \cite[Lemma~A.4.3., p.379]{AF24}). In particular,  $H^i_{T^2}(\rho^{-1}(\bb C \bb P^1))\cong H^i_{T^2}(\SU(3) \setminus \U(2)_{ij})$, for all $i$. To conclude  that $H^5((\SU(3)\setminus \U(2)_{ij})/T^2) = 0$ note that the $T^2$-action on  $\rho^{-1}(\bb C \bb P^1)$  is free.  Therefore,  $\rho^{-1}(\bb C \bb P^1)/T^2$ 
a $4$-dimensional closed manifold. This implies that   $H^i((\SU(3)\setminus \U(2)_{ij})/T^2)$ vanishes  for $i\geq 5$, as desired. Then from Proposition~\ref{prop:isom}, we have $H^i(\EO)= H^{i-4}_{T^2}(\U(2)_{ij})= H^{i-4}_\mathcal{O}(\bb S^2)$, for $i\geq 6$. By Theorem~\ref{thm:main:cohom}, the only remaining cohomology group we need to compute is the fourth cohomology group. We proceed with this computation as follows.

        Consider the following exact sequence from the proof of Proposition~\ref{prop:isom}:
         \begin{align*}
            \ldots \to H^4_T(\SU(3),\SU(3)\backslash \U(2)) \to H^4_\mathcal{O}(\EO) \to H^4_T(\SU(3) \backslash \U(2)) \to H^5_T(\SU(3),\SU(3)\backslash \U(2))\to \ldots
        \end{align*}
        Note that $H^{i}_T(\SU(3),\SU(3)\backslash\U(2)) \cong H^{i}_T(\mathcal{T}\U(2),\mathcal T \U(2)\backslash\U(2)) \cong H^{i-4}_T(\U(2))$, where the last isomorphism is the Thom isomorphism. It is clear that the odd degree cohomology groups of the pair $(\SU(3),\SU(3)\backslash\U(2))$ vanishes and that $H_T^4(\SU(3),\SU(3)\backslash\U(2)) \cong \bb Z$. Furthermore, since $(\SU(3) \backslash \U(2))/T^2$ is homotopy equivalent to a closed $4$-manifold, this exact sequence simplifies to:
         \begin{align*}
            \ldots \to \bb Z \to H^4_\mathcal{O}(\EO) \to \bb Z \to 0.
        \end{align*} 
        By Theorem~\ref{thm:main:cohom}, we have $H^4_\mathcal{O}(\EO) = \bb Z ^2 \oplus \bb Z_r$, for some $r \in \bb N$. The exact sequence  then implies  $r = 1$.
        By this we finish the proof of Case~\eqref{thm:cohomology:singsphere}. 
\end{proof}

If the singular set just consists of one point, one can easily deduce from the proof of Theorem~\ref{thm:main:sing}~\eqref{thm:main:sing:smoothsphere}, that the order of its local group is $k = 3,5$. Therefore we have the following Corollary from Remark~\ref{rem:singpoint} and Theorem~\ref{thm:cohomology:singsphere}:
\begin{corollary}
    If $\EO$ has a single singular point, then its isotropy group $\Z_k$ has order $3$ or $5$ and 
     \begin{align*}
                H^i_{\mathcal{O}}(\EO) = \begin{cases}
                0 & \text{for } i \text{ odd}\\
                    \bb Z & \text{for } i = 0,6\\
                    \bb Z^2 &\text{for } i = 2,4\\
                    \bb Z_k & \text{for } i\ge 8, \text{ even}
                \end{cases}
            \end{align*}
\end{corollary}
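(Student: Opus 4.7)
The plan is to combine the classification from Corollary~\ref{cor:nonnegact} with the general cohomology formula of Theorem~\ref{thm:cohomology:singsphere}, reducing the problem to the orbifold cohomology of a teardrop as in Remark~\ref{rem:singpoint}.

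First, I would observe that, since Theorem~\ref{thm:main:sing}~\eqref{thm:main:sing:same_Parity} implies that the singular set of any positively curved Eschenburg orbifold is either empty or consists of at least three points, the assumption that $\EO$ has a single singular point forces $\EO$ not to admit an Eschenburg metric with positive sectional curvature. Because this single point is contained in at least one edge $\bb S^2_{ij}$ of the graph in Figure~\ref{fig:sing:locus}, Corollary~\ref{cor:nonnegact} applies and $\EO$ is parameter-equivalent to one of the four families listed there.

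Next, I would argue that the single singular point must be a vertex $\ast_\sigma$ and not an interior point of an edge. Indeed, the inner points of any edge share a common local group, so a singular interior point would produce an entire orbifold $2$-sphere of singular points. Hence the unique singular point is one of the six vertices $\ast_\sigma$. Inspecting the three sublists of Corollary~\ref{cor:nonnegact} in which the singular set really is a single point, namely the cases coming from families~(1), (2) and~(4), one reads off directly that the local group is cyclic of order $k\in\{3,5\}$, which establishes the first claim.

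For the cohomology, I would apply Theorem~\ref{thm:cohomology:singsphere} to any edge $\bb S^2_{ij}$ containing the unique singular vertex. This gives $H^i_\OO(\EO)=0$ for $i$ odd, $H^0_\OO(\EO)=\Z$, $H^2_\OO(\EO)=H^4_\OO(\EO)=\Z^2$, and $H^i_\OO(\EO)=H^{i-4}_\OO(\bb S^2_{ij})$ for $i\ge 6$. Since the interior of $\bb S^2_{ij}$ and its opposite endpoint are regular in $\EO$, the orbifold $\bb S^2_{ij}$ is a teardrop with a single singular point of isotropy $\Z_k$, so Remark~\ref{rem:singpoint} yields $H^0_\OO(\bb S^2_{ij})=H^2_\OO(\bb S^2_{ij})=\Z$ and $H^{2m}_\OO(\bb S^2_{ij})=\Z_k$ for $m\ge 2$. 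Substituting into the previous formula produces $H^6_\OO(\EO)=\Z$ and $H^{2m}_\OO(\EO)=\Z_k$ for $m\ge 4$, matching the claim. The main bookkeeping point—and the only mild obstacle—is confirming that $\bb S^2_{ij}$ is a teardrop rather than a football, but this is forced by the hypothesis of a single singular point in $\EO$, since the opposite vertex and all interior points of $\bb S^2_{ij}$ must be regular.
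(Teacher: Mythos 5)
Your proposal is correct and follows essentially the same route as the paper: deduce from Theorem~\ref{thm:main:sing} and the classification coming out of the proof of its Part~\eqref{thm:main:sing:smoothsphere} (i.e.\ Corollary~\ref{cor:nonnegact}) that the lone singular point is a vertex with cyclic local group of order $3$ or $5$, then combine Theorem~\ref{thm:cohomology:singsphere} with the teardrop computation of Remark~\ref{rem:singpoint}. Your added observations --- that a singular interior point would force a whole singular sphere, and that the hypothesis of a single singular point forces $\bb S^2_{ij}$ to be a teardrop rather than a football --- are exactly the bookkeeping the paper leaves implicit.
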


Theorem~\ref{thm:main:cohomgroups} is now a Corollary of Theorem~\ref{thm:cohomology}, Theorem~\ref{thm:main:sing}, Lemma~\ref{lem:actionssmooth} and the following Lemma.

\begin{lemma} \label{lem:cohom_smooth_sphere}
 The smooth orbifold spheres from Lemma \ref{lem:actionssmooth} have vanishing odd cohomology. Furthermore, $H^0_\mathcal{O}(S^2)=\bb Z$ and the remaining cohomology groups in even degrees are given by
    \begin{enumerate}
     \begin{multicols}{2}
        \item   \begin{align*}
            H_\mathcal{O}^n(S^2) = \begin{cases}
                \bb Z \oplus \bb Z_{k} & \text{for } n =2\\
                \bb Z_{k}^2 & \text{for } n= 2m \ge 4
            \end{cases}
        \end{align*}      
        in cases \eqref{lem:actionssmooth:1}, \eqref{lem:actionssmooth:2}, \eqref{lem:actionssmooth:nonpos:1} and~\eqref{lem:actionssmooth:nonpos:2} of Lemma~\ref{lem:actionssmooth}.
        \item  \begin{align*}
            H_\mathcal{O}^n(S^2) = \begin{cases}
                \bb Z \oplus \bb Z_{\gcd(2,k)} & \text{for } n =2\\
                \bb Z_{\gcd(2,k)} \oplus \bb Z_{k^2\over \gcd(2,k)} & \text{for } n= 2m \ge 4
            \end{cases}
        \end{align*}   
        in cases \eqref{lem:actionssmooth:5}, \eqref{lem:actionssmooth:6} of Lemma~\ref{lem:actionssmooth}.
        \end{multicols}   
    \end{enumerate}
    \end{lemma}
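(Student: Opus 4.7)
The plan is to apply Eschenburg's method (Theorem~\ref{thm:cohom:esch}) to the Serre spectral sequence of the bundle $\U(2) \to ET^2 \times_{T^2}\U(2) \to BT^2$, whose total-space cohomology is by definition $H^*_{\OO}(\bb S^2) = H^*_{T^2}(\U(2))$. For each case of Lemma~\ref{lem:actionssmooth}, the induced $T^2$-action on $\U(2)_{ij} \cong \U(2)$ recorded in Section~\ref{SS:singular set} provides an explicit homomorphism $\phi \colon T^2 \to \U(2) \times \U(2)$. Since $H^*(\U(2)) = \Lambda[a_1, a_3]$ with $|a_1| = 1$ and $|a_3| = 3$, only $d_2$ and $d_4$ can be nontrivial, and by Theorem~\ref{thm:cohom:esch} they are given by $B\phi^*(\delta_1)$ and $k_4(B\phi^*(\delta_2))$, where $\delta_i = \sigma_i \otimes 1 - 1 \otimes \sigma_i$.

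The first step is to compute $B\phi^*(\delta_1) \in H^2(BT^2) = \Z\langle x, y\rangle$ in each case and factor it as $c \cdot v$ with $v$ primitive. A direct calculation yields $c = k$ in cases \eqref{lem:actionssmooth:1}, \eqref{lem:actionssmooth:2}, \eqref{lem:actionssmooth:nonpos:1}, \eqref{lem:actionssmooth:nonpos:2}, and $c = \gcd(2, k)$ in cases \eqref{lem:actionssmooth:5}, \eqref{lem:actionssmooth:6}; the latter dichotomy arises because in those cases $B\phi^*(\delta_1)$ takes the shape $2x - (u+1) y$ (or its analogue in case \eqref{lem:actionssmooth:6}) whose coefficients share precisely the common factor $\gcd(2, u+1) = \gcd(2, k)$. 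A $\Gl_2(\Z)$-change of basis on $H^2(BT^2)$ puts us in coordinates $(v, w)$ with $E_3^{*, 0} = \Z[v, w]/\langle cv\rangle$, and rows~$1$ and~$4$ of $E_3$ vanish because multiplication by $cv$ is injective on $\Z[v, w]$.

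The second step is to compute $B\phi^*(\delta_2) = \epsilon_0 w^2 + \epsilon_1 vw + \epsilon_2 v^2$ in the new basis and to analyze $E_\infty^{*, 0} = \Z[v, w]/\langle cv,\, B\phi^*(\delta_2)\rangle$. The essential case-by-case calculation shows $\epsilon_0 = \pm 1$ when $c = k$ (so that the relation solves for $w^2$), and $\epsilon_0 = \mp (k/c)^2$ with $\epsilon_2$ or a complementary cofactor a unit when $c = \gcd(2, k)$ (so that the relation solves for $v^2$); the key algebraic identity in the latter setting, which in case \eqref{lem:actionssmooth:5} reduces to
\begin{align*}
-(u-1)^2 + 4s(u-1-s) = -[(u-1) - 2s]^2 = -k^2,
\end{align*}
produces the coefficient $\mp(k/c)^2$ on $w^2$. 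Using the relation $B\phi^*(\delta_2) = 0$ to eliminate either $w^2$ or $v^2$ from a basis of degree~$2n$, and noting that $cv = 0$ forces every $v$-multiple to be $c$-torsion, a degree-by-degree count shows the two surviving generators have orders $c$ and $k^2/c$. This gives $E_\infty^{2n, 0} \cong \Z_c \oplus \Z_{k^2/c}$ for $n \geq 2$ and $E_\infty^{2, 0} \cong \Z \oplus \Z_c$. Injectivity of $d_4$ on row~$3$ is then a routine check using McCoy's criterion: $B\phi^*(\delta_2)$ has a unit leading coefficient in $\Z/c$, so it is a nonzerodivisor in $\Z[v, w]/\langle cv\rangle$, which forces $E_\infty^{*, 3} = 0$ and eliminates any extension problem. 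Setting $c = k$ or $c = \gcd(2, k)$ recovers cases~(1) or~(2) of the lemma, respectively. The main obstacle will be the uniform case-by-case verification of the shape of $B\phi^*(\delta_2)$; although the algebra is elementary, it is delicate, and in cases \eqref{lem:actionssmooth:2} and \eqref{lem:actionssmooth:6} the divisibility constraints $c \mid em \pm 1$ require a careful choice of the complementary basis vector~$w$.
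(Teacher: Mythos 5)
Your proposal is correct and follows essentially the same route as the paper: both apply Eschenburg's method (Theorem~\ref{thm:cohom:esch}) to the induced $T^2$-biquotient action on $\U(2)_{ij}$, obtain the presentation $H^\ast_\OO(\bb S^2)\cong \Z[t,s]/\langle \tilde\sigma_1,\tilde\sigma_2\rangle$, and then determine the groups case by case by integral linear algebra on the two relations. The only organizational differences are that the paper computes just degrees $2$ and $4$ (as Smith normal forms of explicit $3\times 3$ matrices built from $tR_1$, $sR_1$, $R_2$) and gets stabilization in degrees $\ge 4$ from the localization theorem of \cite{tD87} via the freely acting circle in $T^2$, whereas you first normalize $R_1$ to $c\cdot v$ by a $\Gl(2,\Z)$ change of basis and then track every degree by hand. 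Your degree-by-degree elimination does work, but note that in the $\gcd(2,k)$ cases it needs the $v^2$-coefficient of $R_2$ to be a genuine unit (not merely odd) for the reduction to go through in all degrees $2n\ge 6$; this is precisely the deferred case-by-case verification, and it does hold (e.g.\ $\epsilon_2=1$ in case~\eqref{lem:actionssmooth:5}), while in the $c=k$ cases the condition $\epsilon_0=\pm 1$ is automatic from the determinant constraint $\det = \pm k^2$.
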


        \begin{proof}
            We use Theorem~\ref{thm:cohom:esch} to compute the cohomology rings of an orbifold $S^2$ given by a $T^2$-biquotient action on $\U(2)$. In fact, by just following the arguments of the proof of Theorem~\ref{thm:main:cohom}, we get
            \begin{align*}
                H_\mathcal{O}^\ast(\bb S^2) = \bb Z [t,s]/\langle \tilde \sigma_i(t\tilde p + s\tilde a) -\tilde \sigma_i(t\tilde q+s\tilde b)\vert \, i=1,2\rangle
            \end{align*}
            with
            \begin{align*}
                \tilde\sigma_1(T_1,T_2)&=T_1+T_2\\
                \tilde \sigma_2(T_1,T_2)&=T_1T_2
            \end{align*}
            and $\tilde p$, $\tilde q$, $\tilde a$, $\tilde b$ being the parameters of the $T^2$-action on $\U(2)$. In the cases of Lemma \ref{lem:actionssmooth}, we have $\U(2) = \U(2)_{11}$ and the parameters are given by the $(2, 3)$-components of the parameters of the $\T$-action on $\SU(3)$.  It is immediately clear that $H^{\mathrm{odd}}_\mathcal{O}(\bb S^2) = 0$. Furthermore, in all cases it is clear, that there is an $S^1 \subset T^2$, which acts freely. Therefore by \cite{tD87} (4.4) $\cup\, r \colon H^i_T(\bb S^2) \to H^{i+2}(\bb S^2)$ is an isomorphism for $i \ge 4$, where $r$ is the generator of $H^\ast(BS^1) \cong \bb Z[r]$. Hence we only need to compute the cohomology groups in degrees $2$ and $4$. The relation in degree $2$ is
            \begin{align*}
                \tilde\sigma_1(\tilde p - \tilde q)t + \tilde \sigma_1(\tilde a - \tilde b) s
            \end{align*}
            and the relations in degree $4$ are
             \begin{align*}
                &t(\tilde\sigma_1(\tilde p - \tilde q)t + \tilde \sigma_1(\tilde a - \tilde b)s)\\
                &s(\tilde\sigma_1(\tilde p - \tilde q)t + \tilde \sigma_1(\tilde a - \tilde b)s)\\
                &(\tilde\sigma_2(\tilde p) - \tilde \sigma_2(\tilde q))t^2 + (p_1a_2 + p_2 a_1 - q_1b_2 - q_2 b_1)st +( \tilde \sigma_2(\tilde a) - \tilde \sigma_2(\tilde b))s^2
            \end{align*}
            Therefore $H^2_\mathcal{O}(\bb S^2) \cong \bb Z \oplus \bb Z_{\gcd(\tilde\sigma_1(\tilde p - \tilde q),\tilde \sigma_1(\tilde a - \tilde b))}$ and $H^4_\mathcal{O}(\bb S^2) \cong \bb Z^3/\mathrm{Im}\,A$, with 
            \begin{align*}
                A = \begin{pmatrix}
                    \tilde\sigma_1(\tilde p - \tilde q) & \tilde\sigma_1(\tilde a - \tilde b)& 0\\
                    0&  \tilde\sigma_1(\tilde p - \tilde q) & \tilde\sigma_1(\tilde a - \tilde b)\\
                    \tilde\sigma_2(\tilde p) - \tilde \sigma_2(\tilde q) & \tilde p_1 \tilde a_2 + \tilde p_2 \tilde a_1 - \tilde q_1\tilde b_2 - \tilde  q_2 \tilde b_1 &\tilde \sigma_2(\tilde a) - \tilde \sigma_2(\tilde b)
                \end{pmatrix}
            \end{align*}
            The strategy is now to compute the Smith normal form of $A$ 
            \begin{align*}
                \Smith(A) = \begin{pmatrix}
                    \alpha_1 &&\\
                    &\alpha_2&\\
                    && \alpha_3
                \end{pmatrix}
            \end{align*}
            Then $\bb Z^3 /\mathrm{Im}\, A\cong \bb Z_{\alpha_1} \oplus \bb Z_{\alpha_2} \oplus \bb Z_{\alpha_3}$. $\Smith(A)$ can be obtained by applying basic operations on the columns and rows of $A$. Furthermore $\alpha_i = {d_i(A)\over d_{i-1}(A)}$, where $d_i(A)$ is the gcd of the $i \times i$ minors of $A$.
            \begin{enumerate}
                \item 
                \begin{align*}
                    A = \begin{pmatrix}
                        0 & 1-u & 0\\
                        0&0&1-u\\
                        -1 & 1- 2s + u & -u^2 +s (1-s+u)
                    \end{pmatrix} \text{ and }  \Smith(A) = \begin{pmatrix}
                    1&0&0\\
                       0&  1-u & 0\\
                       0& 0&1-u\\
                    \end{pmatrix}
                \end{align*}
                It follows immediately that $H^2_\mathcal{O}(\bb S^2) \cong \bb Z \oplus \bb Z_{1-u}$ and $H^2_\mathcal{O}(\bb S^2) \cong \bb Z_{1-u}^2$.
                \item 
                 \begin{align*}
                    A = \begin{pmatrix}
                        cr & -mr & 0\\
                        0&cr&-mr\\
                        e(cr -e) & r(cm(r+1)-1-2emr) - 2 e {em+1 \over c}& -m^2r(r+1) - {(em+1)^2\over c^2} + kr {em+1 \over c}
                    \end{pmatrix} 
                    \end{align*}
                    It is immediately clear, since $m$ and $c$ are relatively prime, that $H^2_\mathcal{O}(\bb S^2) \cong \bb Z \oplus \bb Z_r$. We multiply $A$ from the left with the matrix
                    \begin{align*}
                        \begin{pmatrix}
                            1&0&0\\
                            0&1&0\\
                            0&{em+1 \over c}& 1
                        \end{pmatrix} \cdot \begin{pmatrix}
                            1&0&0\\
                            0&1&0\\
                            0&-m(r+1)& 1
                        \end{pmatrix}
                    \end{align*}
                    to obtain
                    \begin{align*}
                        \tilde A = \begin{pmatrix}
                            cr & -mr & 0\\
                        0&cr&-mr\\
                        e(cr -e) & 2e{em+1 \over c}-emr & -{(1+em)^2 \over c^2}
                        \end{pmatrix} 
                    \end{align*}
                    Since the gcd of all entries needs to divide $r$, it also has to divide $e$ by the $(3,1)$-element of the matrix. But then also ${1+em \over c}$. Therefore $g = 1$, since $e$ and ${1+em \over c}$ are relatively prime. Furthermore $c^2r^2$ and $m^2r^2$ are $2$-minors of $\tilde A$. Therefore the gcd $g$ of all $2$-minors has to divide $r^2$. Since all other $2$-minors contain $r$ we have $g =rr^\prime$, where $r^\prime$ divides $r$. We can argue as for the $1$-minors, that $r^\prime =1$. Since $\det (\tilde A) = r^2$, we have  
                    \begin{align*}
                        \Smith(A) = \begin{pmatrix}
                            1&0&0\\
                            0&r&0\\
                            0&0&r
                        \end{pmatrix}.
                    \end{align*}
                    It follows that $H^1_{\mathcal{O}} \cong \bb Z_r^2$.
                    \item
                    \begin{align*}
                         A = \begin{pmatrix}
                        2 & -1-u & 0\\
                        0&2&-1-u\\
                        1 & -1-u & -u^2+s(1-s-1)
                    \end{pmatrix}
                    \end{align*}
                    It follows immediately that $H^2_\mathcal{O}(\bb S^2) \cong \bb Z \oplus \bb Z_{\gcd(2,u+1)}$. Since $\gcd(2,u+1) = \gcd(2,u-2s+1)$, the result follows in degree $2$. Furthermore it is clear, that $d_1(A) =1$ and $d_2(A) = \gcd(2,u+1)$, since $2, -u-1$ are $2$-minors and all other $2$-minors are divisible by $\gcd(2,u+1)$. Since $\det A = (u-2s-1)^2$, we obtain 
                    \begin{align*}
                        \Smith(A) =  \begin{pmatrix}
                            1&0&0\\
                            0&\gcd(2, u-2s +1)&0\\
                            0&0&{(u-2s+1)^2 \over \gcd(2, u-2s+1)}
                        \end{pmatrix}
                    \end{align*}
                    and the result follows.
                    \item 
                    \begin{align*}
                        A = \begin{pmatrix}
                        2e-cr  & -mr -2 {1-em \over c} & 0\\
                        0&2e-cr  & -mr -2 {1-em \over c}\\
                        e(cr -e) & -r + 2 e {1-em \over c}&  {(em+1)\over c} \left(3{1-em \over c} + mr\right)
                    \end{pmatrix} 
                    \end{align*}
                    Again we multiply $A$ with the matrix
                     \begin{align*}
                        \begin{pmatrix}
                            1&0&0\\
                            0&1&0\\
                            0&{em+1 \over c}& 1
                        \end{pmatrix} \cdot \begin{pmatrix}
                            1&0&0\\
                            0&1&0\\
                            0&-m(r+1)& 1
                        \end{pmatrix}
                    \end{align*}
                    to obtain 
                    \begin{align*}
                        \tilde A = \begin{pmatrix}
                        2e-cr  & -mr -2 {1-em \over c} & 0\\
                        0&2e-cr  & -mr -2 {1-em \over c}\\
                        e(cr -e) & -emr&  {(em-1)^2\over c^2}
                    \end{pmatrix} 
                    \end{align*} Since $c$ and $m$ are relatively prime, we find $s,t \in \bb Z$, such that $sc+tk = 1$. Then 
                    \begin{align*}
                        (2e-cr, -mr +2 {1-em \over c}) \cdot \begin{pmatrix}
                            s & -k\\
                            t & c
                        \end{pmatrix} = (2es -r + 2t  {1-em \over c}, 2)
                    \end{align*}
                    Therefore $H^2_\mathcal{O}(\bb S^2) \cong \bb Z \oplus \bb Z_{\gcd(2,r)}$. 
                    Since $\gcd\left(e, {(em-1)\over c}\right) = \gcd\left(m, {(em+1)\over c}\right) = 1$, we have $d_1(\tilde A) =1$. Since $\gcd\left(2e-cr, -mr +2 {1-em \over c}\right) = \gcd(2,r)$ and $(2e-cr)^2$ and $(-mr -2 {1-em \over c})^2$ are $2$-minors, it is easy to see that $d_2(\tilde A) = \gcd(2,r)$. Since $\det(\tilde A) = -r^2$, we have
                    \begin{align*}
                        \Smith(A) = \begin{pmatrix}
                            1&0&0\\
                            0&\gcd(2,r)&0\\
                            0&0& {r^2 \over \gcd(2,r)}
                        \end{pmatrix}
                    \end{align*}
                   
                    \item  
                \begin{align*}
                    A = \begin{pmatrix}
                        -2 & 2a_2 & 0\\
                        0&-2&2a_2\\
                        0 & -2a_2& 1-2a_2^2
                    \end{pmatrix} \text{ and }  \Smith(A) = \begin{pmatrix}
                    1&0&0\\
                       0&  2 & 0\\
                       0& 0&2\\
                    \end{pmatrix}
                \end{align*}
                It follows immediately that $H^2_\mathcal{O}(\bb S^2) \cong \bb Z \oplus \bb Z_{2}$ and $H^2_\mathcal{O}(\bb S^2) \cong \bb Z_{2}^2$.
                    \item Since $\vert a_1 \vert = 2$, we have
                \begin{align*}
                    A = \begin{pmatrix}
                        0 & -a_1 & 0\\
                        0&0&-a_1\\
                        -1 & (a_1-2a_2)x& -3+a_1a_2-a_2^2
                    \end{pmatrix} \text{ and }  \Smith(A) = \begin{pmatrix}
                    1&0&0\\
                       0&  2 & 0\\
                       0& 0&2\\
                    \end{pmatrix}
                \end{align*}
                It follows immediately that $H^2_\mathcal{O}(\bb S^2) \cong \bb Z \oplus \bb Z_{2}$ and $H^2_\mathcal{O}(\bb S^2) \cong \bb Z_{2}^2$.
            \end{enumerate}
        \end{proof}

\begin{remark}
    Let $\bb S^2 \cong \U(2)\sslash T$ be an orbifold $2$-sphere with orbifold groups $\bb Z_{k_1}$, $\bb Z_{k_2}$ at the singular orbits and $\bb Z_k$ at the pricipal orbits. It easily follows from a Mayer-Vietoris-Sequence, that the stable orbifold cohomology group of $\bb S^2$  lies in a short exact sequence
    \begin{align*}
        0 \to \Z_k \to H^4_\OO(\bb S^2) \to \Z_{k_1k_2 \over k} \to 0
    \end{align*}
    Therefore the order of $H^4_\OO(\bb S^2)$ is $k_1k_2$
\end{remark}
\begin{example} \label{ex:nonegative}
    Consider 
    \begin{align*}
        p = (1,-1,-1), \quad q =(-1,0,0), \quad a = (l-1+2u+k,1,-l), \quad b=(0,u,u+k)
    \end{align*}
    For $k, u \in \Z$, $k \neq \pm 2$,  $l = \pm 1$. The singular Set of $\EO$ is an orbifold $2$-sphere with orbifold groups $\bb Z_{1+k+l}$, $\Z_{1-k+l}$ and $\Z_{\gcd(k,l+1)}$. From the proof of Theorem~\ref{thm:main:sing} it follows, that $\EO$ has no Eschenburg metric with positive sectional curvature, if and only if $l = 1$ and $k \neq 0$. Using the method of Lemma \ref{lem:cohom_smooth_sphere} is easy to compute that its cohomology groups are given by
    \begin{align*}
          H^i_{\mathcal{O}}(\EO) = \begin{cases}
                0 & \text{for } i \text{ odd}\\
                    \bb Z & \text{for } i = 0\\
                    \bb Z^2 &\text{for } i = 2,4\\
                    \Z \oplus \Z_{\gcd (2,k)}  & \text{for } i = 6\\
                    \Z_{\gcd (2,k)} \oplus \Z_{k^2-(1+l)^2\over \gcd(2,k)} & \text{for } i = 8
                \end{cases}
    \end{align*}
    If $l = 1$, then the order of $H^8_{\mathcal{O}}(\EO)$ is $k^2-4$, which cannot be a square, unless $k=0$, and therefore cannot be realised by a positively curved Eschenburg Orbifold.
    Note that $\EO$ has exactly two singular points, if $l =1$ and $k$ is odd.
\end{example}

\bibliographystyle{alpha}
\bibliography{References}

\end{document}